    \def\tank#1{\mathbb Protected@xdef\@thanks{\@thanks
     \mathbb Protect\footnotetext[0]{#1}}}
    \def\bigfoot{

     \@footnotetext}
    \newcommand{\ea}{\end{array}}
    \numberwithin{equation}{section}
    \newtheorem{theorem}{Theorem}[section]
    \newtheorem{lemma}{Lemma}[section]
    \newtheorem{proposition}[theorem]{Proposition}
    \newtheorem{condition}[theorem]{Condition}
    \newtheorem{corollary}[theorem]{Corollary}
    \newtheorem{definition}[theorem]{Definition}
    \newtheorem{remark}{Remark}[section]
    \def\beq{\begin{equation}}
    \def\nneq{\end{equation}}
    \def\bthm{\begin{theorem}}
    \def\nthm{\end{theorem}}
    \def\blem{\begin{lemma}}
    \def\nlem{\end{lemma}}
    \def\bprf{\begin{proof}}
    \def\nprf{\end{proof}}
    \def\bprop{\begin{prop}}
    \def\nprop{\end{prop}}
    \def\brmk{\begin{rem}}
    \def\nrmk{\end{rem}}
    \def\bexa{\begin{exa}}
    \def\nexa{\end{exa}}
    \def\bcor{\begin{cor}}
    \def\ncor{\end{cor}}
        \newcommand{\E}{\mathbb E}
  \newcommand{\eps}{\varepsilon}
      \def\R{\mathbb{R}}
    \def\RR{\mathbb{R}}
    \def\EE{\mathbb{E}}
    \def\cF{\mathcal{F}}
    \def\cD{\mathcal{D}}
    \def\cN{\mathcal{N}}
    \def\ee{{\mathbb E}}
    \def\e{{\varepsilon}}
    \newcommand{\1}{{\bf 1}}
    \newcommand{\lc}{\left(}
    \newcommand{\rc}{\right)}
    \newcommand{\lk}{\left[}
    \newcommand{\rk}{\right]}
    \newcommand{\lt}{\left }
    \newcommand{\rt}{\right}
    \title[Temporal regularity for the nonlinear SHE]{Temporal regularity  for the nonlinear stochastic  heat equation  with spatially rough noise}
         \date{}
\begin{document}

       \author[B. Qian]{Bin Qian}
    \address[]{Bin Qian, Department of Mathematics and Statistics, Suzhou university of Technology, Changshu, Jiangsu, 215500,  China.}
    \email{binqiancn@126.com}

    \author[M. Wang]{Min Wang}
    \address[]{Min Wang, School of Mathematics and Statistics,  Wuhan University of Technology,  Wuhan, 430063, 
    China.}
    \email{minwangmath@whut.edu.cn}

    \author[R. Wang]{Ran Wang}
    \address[]{Ran Wang, School of Mathematics and Statistics,  Wuhan University,  Wuhan, 430072,
    China.}
    \email{rwang@whu.edu.cn}

    \author[Y. Xiao]{Yimin Xiao}

    \address[]{Yimin Xiao, Department of Statistics and Probability, Michigan State University, East Lansing,
    MI 48824, USA.}
    \email{xiaoy@msu.edu}

    \maketitle
     \noindent {\bf Abstract:}
   Consider  the nonlinear stochastic  heat equation 
$$
  \frac{\partial u (t,x)}{\partial t}=\frac{\partial ^2 u (t,x)}{\partial x^2}+ \sigma(u (t,x))\dot{W}(t,x),\quad   t> 0,\,
   x\in \mathbb{R},
$$
where $\dot W$ is a Gaussian noise which is white in time and fractional  in space with Hurst parameter $H\in(\frac 14,\frac 12)$.  
The existence and uniqueness of the solution of this equation was proved by Balan et al. \cite{BJQ2015} when 
$\sigma(u) = au + b$ is an affine function, and by Hu et al.  \cite{HHLNT2017} when $\sigma$ is differentiable with Lipschitz derivative 
and $\sigma(0)=0$.  In these cases, the H\"older continuity of the solution  
has also been established by Balan  et al. \cite{BJQ2016} and Hu et al.  \cite{HHLNT2017}, respectively.

In this paper,  we study the asymptotic properties of the temporal gradient $u(t+\varepsilon, x)-u(t, x)$ at any fixed $t \ge  0$ and 
$x\in \mathbb R$,  as $\varepsilon\downarrow 0$, under the framework of \cite{HHLNT2017}. As applications, we deduce  
Khinchin's law of the iterated logarithm,  Chung's 
law of the iterated logarithm, and  the  quadratic variation  of the temporal process $\{u(t, x)\}_{t \ge 0}$, where $x\in \mathbb R$ is fixed.

 \vskip0.3cm
 \noindent{\bf Keyword:} {Stochastic heat equation; Fractional Brownian motion;   Law of the iterated logarithm;  Rough noise.}
 \vskip0.3cm

\noindent {\bf MSC: } {60H15; 60G17; 60G22.}
    \section{Introduction  }

    In this paper, we are interested in the temporal regularity of the solution to the nonlinear stochastic  heat equation  (SHE, for short):  
    \begin{equation}\label{SHE rough}
      \frac{\partial  u(t,x)}{\partial t}=\frac{\partial ^2  u(t,x)}{\partial x^2}+ \sigma(u(t,x))\dot{W}(t,x),\quad   t>0,\,
       x\in\RR,
    \end{equation}
where $W(t,x)$ is a centered Gaussian field with the covariance given by
    \begin{equation}\label{CovW}
      \ee\left[W(t,x)W(s,y)\right]=\frac 12 \left(s\wedge t\right)\left( |x|^{2H}+|y|^{2H}-|x-y|^{2H} \right),
    \end{equation}
with $\frac 14<H<\frac 12$. That is, $W$ is a standard Brownian motion in time and a fractional Brownian motion (fBm, for short) 
with Hurst index $H$ in space and $\dot W(t,x)=\frac{\partial ^2}{\partial t\partial x}W(t,x)$. Formally,  the covariance of the noise 
$\dot{W}$ is given by
    $$\ee\left[\dot{W}(t,x)\dot{W}(s,y)\right]=\delta_0 ( t-s)\Lambda\left(x-y\right),$$
    where the spatial covariance $\Lambda$ is a distribution, whose Fourier transform is the measure    
    \begin{align}\label{e.mu}
    \mu(d\xi)=c_{1,1}|\xi|^{1-2H}d\xi,
    \end{align}
     with
     \begin{align}
       c_{1,1}=&\,\frac{1}{2\pi}\Gamma\left(2H+1\right)\sin\left(\pi H\right).   \label{e.c1}
       \end{align}
   The spatial covariance $\Lambda(x-y)$ can be formally written as $\Lambda(x-y)=H(2H-1) |x-y|^{2H-2}$. However, the corresponding 
   covariance $\Lambda$ is not locally integrable and does not become nonnegative when $H\in(\frac 14,\frac 12)$.  It does not satisfy 
   the classical Dalang's condition in \cite{Dalang1999}, where $\Lambda$ is given by a non-negative locally integrable function.  
   Consequently, the standard approaches used in references \cite{Dalang1999, DQ, DaPrato2014} do not apply to such rough covariance 
   structures.

         Recently, many authors have studied the existence and uniqueness of solutions of stochastic partial differential equations (SPDEs, for short) driven by Gaussian noise with the covariance of a fractional Brownian motion with Hurst parameter $H\in (\frac1 4, \frac12)$ in the space variable. See, e.g., \cite{HHLNT2017}, \cite{HHLNT2018}, \cite{HW2022}, \cite{LHW2022}, and \cite{SongSX2020}. For surveys on the subject, we refer to \cite{hu19} and \cite{Song2018}. When the diffusion coefficient is affine, i.e., $\sigma(x)=ax+b$, Balan et al.   \cite{BJQ2015} proved the existence and uniqueness of the mild solution to  SHE \eqref{SHE rough} using the Fourier analytic techniques.  They also established the H\"older continuity of the solution in \cite{BJQ2016}.  In the case of a non-linear coefficient $\sigma(u)$, Hu et al. \cite{HHLNT2017}  proved the well-posedness of SHE \eqref{SHE rough} under the
assumption that $\sigma(u)$ is Lipschitz continuous,  differentiable with a Lipschitz derivative and that $\sigma(0)=0$.  Under similar conditions,  Liu and Mao \cite{LM2022}
studied the well-posedness and intermittency of the SHE with non-local fractional differential operators.  Hu and Wang \cite{HW2022} removed the condition $\sigma(0)=0$ by introducing a decay weight.

 When $W$ is space-time white noise, i.e. $H=\frac{1}{2}$, Khoshnevisan et al. \cite{KSXZ2013}
 developed an approximation approach to study the temporal regularity of the solution     $\{u(t, x)\}_{t \ge 0}$ to  the    stochastic fractional heat equation
       \begin{equation}\label{SFHE}
      \frac{\partial  u_{\alpha}(t,x)}{\partial t}= -(-\Delta)^{\frac{\alpha}{2}}u_{\alpha}(t,x) + \sigma(u_{\alpha}(t, x))\dot{W}(t,x),\quad   t>0,\,   x\in\RR,
    \end{equation}
 where $-(-\Delta)^{\frac{\alpha}{2}}, \alpha\in (1,2]$, is the fractional Laplacian on $\mathbb R$ and $\sigma : \RR \, \mapsto \RR$ is a globally Lipschitz continuous function. The  key idea in \cite{KSXZ2013} is to show that  for every fixed $t>0$ and $x \in\mathbb R$, as $\e\downarrow 0$,
            $$
            u_{\alpha}(t+\e, x)-u_{\alpha}(t, x)\approx \frac{1}{\sqrt{\pi(\alpha-1)}}\Gamma\left(\frac{1}{\alpha}\right)^{\frac12}\sigma(u_{\alpha}(t, x)) \left[B^{H_0}(t+\e)-B^{H_0}(t) \right],
                        $$
          in a certain sense, where  $B^{H_0}$ denotes a  fBm with Hurst index $H_0=\frac{\alpha-1}{4}\in \left(0, \frac14\right]$.
            They were able to quantify the size of the approximation error by controlling its moments. Consequently,
            some of the local properties of $t \mapsto u(t, x)$ can be derived from those of the fBm $B^{H_0}$. 
            This type of local linearization was independently developed around the same time in \cite{HP15}, which established nearly sharp error bounds under additional smoothness assumptions on $\sigma$.
 The argument provides a quantitative framework for analyzing the local   structure  of the solutions to parabolic SPDEs, as further explored in \cite{CHKK19, DNP2025, Das2022, HK2017, KKM23}.

    Recently,  Wang and Xiao \cite{WX2024}  extended the approximation approach of \cite{KSXZ2013} to the stochastic fractional heat equation driven by 
    a centered Gaussian field that is white  in time and has the covariance of  a  fBm with Hurst parameter $H\in \left(\frac12,1\right)$ in the space variable.
   In this paper,   we continue this line of research and extend the approximation approach in \cite{KSXZ2013, WX2024} to  SHE \eqref{SHE rough}  in 
     setting of \cite{HHLNT2017} for the rough noise with $H\in \left(\frac14,  \frac12\right)$.  

        Let us first consider the  linear SHE
       \begin{equation}\label{eq SHE linear}
        \frac{\partial  v(t, x)}{\partial t} =\frac{\partial ^2  v(t,x)}{\partial x^2}  +   \dot W(t, x), \ \ \ t>0, x\in \mathbb R,
        \end{equation}
    with the initial condition $v(0,  x)= 0$ for all $x\in \mathbb R.$  The solution  to \eqref{eq SHE linear}  is given by
        \begin{equation}\label{eq mild SHE}
     v(t,x) = \int_0^t \int_{\mathbb R} p_{t-s}(x-y)  W(ds,dy),
    \end{equation}
    where  $p_t(x)$ is the heat kernel, defined as
      \begin{equation}\label{eq heat}
   p_t(x) =\frac{1}{\sqrt{4\pi t}}e^{-\frac{x^2}{4t}}.
     \end{equation}

     Using the idea of the  pinned string process from Mueller and Tribe \cite{MT02}, it can be shown (cf. \cite{TX17, KT2019,   HSWX20}) that,
     for any fixed $x\in \mathbb R$,   there exists a Gaussian process $\{T(t)\}_{t\ge0}$, which is  independent of $ \{v(t, x)\}_{(t,x)\in \mathbb R_+\times \mathbb R}$ and  has a version that is infinitely differentiable on $(0,\infty)$, such that
          \begin{align}\label{eq decom}
   \kappa^{-1}  \left(v(t,x)+T(t) \right) \ \ \ (t\ge0)
       \end{align}
       is a  fBm with Hurst parameter $\frac{H}{2}$,  where
    \begin{equation}\label{eq constant 1}
     \kappa:=\left(  \frac{ \Gamma(2H)}{ \Gamma(H)} \right)^{\frac12}.
     \end{equation}
      See   Lemma \ref{lem decom} below  for details.

  To study   the well-posedness of  SHE \eqref{SHE rough},   Hu et al. \cite{HHLNT2017}   proposed  the following condition:
    \begin{condition}\label{cond A}
     \begin{itemize}
     \item[(A1)] For some $\beta_0> \frac{1}{2}-H$ and some   $p_0 > \max \left(\frac 6 {4H-1}, \frac{1}{\beta_0+H-{1}/{2}}\right)$, the initial value $u_0$ is in $L^{p_0}(\RR) \cap L^\infty(\RR)$ and
     \begin{equation}\label{eq u0}
     \begin{split}
         &\sup_{x\in\RR}  \int_{\RR}|u_0(x)-u_0(x+h)|^2 |h|^{-1-2\beta_0}dh  \\
         & \qquad + \int_{\RR}\|u_0(\cdot)-u_0(\cdot+h)\|^2_{L^{p_0}(\RR)}|h|^{2H-2}dh  < \infty.
     \end{split}
     \end{equation}
     \item[(A2)] $\sigma$ is Lipschitz    and differentiable, the derivative of $\sigma$ is Lipschitz, and $\sigma(0)=0$.
     \end{itemize}
    \end{condition}
 Let 
 $$
 p_t*u_0(x):=\int_{\mathbb R} p_t(x-y)u_0(y)dy.
 $$
  For any $\e>0$ and any random field $\{\eta(t, x)\}_{t\ge0, x\in \mathbb R}$, denote
        \begin{align}\label{eq differ}
         (\mathcal D_{\varepsilon} \eta)(t,x):=\eta(t+\varepsilon,  x)-\eta(t, x), \ \ \ \ \ \ \ \ \ t\ge0, x\in \mathbb R.
         \end{align}
Let
 \begin{align}\label{eq vartheta}
 \vartheta_0:=\frac{1}{2} \left( H \wedge \beta_0\right),
 \end{align}
 and
  \begin{align}\label{eq interval}
  \mathcal I:=\left(0, \frac{(2-H)\vartheta_0}{2(2+\vartheta_0)}\right] \cap \left(0, \frac{(2-H)(1-2H)}{2(5-2H)}\right).
  \end{align}
 \begin{theorem}\label{thm main} Assume that Condition \ref{cond A} holds. For every $p\ge 1$ and $\delta \in \mathcal I$,  there exists a constant   $c_{1,2}\in (0, \infty)$
  such that  for  all  $\varepsilon\in(0,1)$,  $t\in [0,T]$, and $x\in \mathbb R$,
         \begin{equation}\label{eq main}
         \begin{split}
         \Big\| (\mathcal D_{\varepsilon} u)(t,x) &-\left[ p_{t+\e}(\cdot)-p_t(\cdot)\right]*u_0(x)
          -\sigma(u(t, x))(\mathcal D_{\varepsilon} v)(t,x)   \Big\|_{L^{p}(\Omega)}
        \le   c_{1,2}\,   \varepsilon^{\frac{H}{2}+\delta}.
        \end{split}
         \end{equation}
               \end{theorem}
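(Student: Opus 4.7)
The plan is to write the difference in \eqref{eq main} as a sum of two Skorohod integrals against $W$ and to estimate each in $L^p(\Omega)$ by combining a Burkholder--Davis--Gundy-type inequality for integrals against rough noise (in the style of \cite{HHLNT2017}) with the already established H\"older regularity of $u$.

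Starting from the mild formulation
\begin{equation*}
u(t,x)=p_t\ast u_0(x)+\int_0^t\!\int_{\R} p_{t-s}(x-y)\,\sigma(u(s,y))\,W(ds,dy),
\end{equation*}
and the analogous expression for $v$, the deterministic drift $[p_{t+\varepsilon}-p_t]\ast u_0(x)$ cancels and the constant factor $\sigma(u(t,x))$ can be pulled inside the integral defining $\sigma(u(t,x))(\mathcal D_\varepsilon v)(t,x)$, giving the decomposition $E(t,x,\varepsilon)=A_1+A_2$, where
\begin{align*}
A_1 &= \int_t^{t+\varepsilon}\!\int_{\R} p_{t+\varepsilon-s}(x-y)\,\bigl[\sigma(u(s,y))-\sigma(u(t,x))\bigr]\,W(ds,dy),\\
A_2 &= \int_0^{t}\!\int_{\R} \bigl[p_{t+\varepsilon-s}(x-y)-p_{t-s}(x-y)\bigr]\,\bigl[\sigma(u(s,y))-\sigma(u(t,x))\bigr]\,W(ds,dy).
\end{align*}
The linearization residual $\Phi(s,y):=\sigma(u(s,y))-\sigma(u(t,x))$ is precisely what produces the small factor $\varepsilon^\delta$ beyond the intrinsic fBm scale $\varepsilon^{H/2}$ already carried by $(\mathcal D_\varepsilon v)(t,x)$.

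Next I would apply a BDG-type inequality for Skorohod integrals against $W$ of the form
\begin{equation*}
\biggl\|\int_0^T\!\int_\R F(s,y)\,W(ds,dy)\biggr\|_{L^p(\Omega)}^2
\lesssim \int_0^T\!\int_\R \bigl\|\widehat{F(s,\cdot)}(\xi)\bigr\|_{L^p(\Omega)}^2\,\mu(d\xi)\,ds\;+\;\text{(Malliavin correction)},
\end{equation*}
with $\widehat{\,\cdot\,}$ denoting the spatial Fourier transform. Combined with the Lipschitz property of $\sigma$ and the $L^p(\Omega)$-H\"older regularity of $u$ established in \cite{HHLNT2017} --- spatial exponent slightly less than $H\wedge\beta_0$ and temporal exponent slightly less than $\vartheta_0$ --- this reduces the estimation of $\|A_i\|_{L^p(\Omega)}$ to controlling weighted Fourier integrals of the heat-kernel factors multiplied by $\|\Phi(s,y)\|_{L^p(\Omega)}\lesssim |s-t|^{\vartheta_0^-}+|y-x|^{(H\wedge\beta_0)^-}$. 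The Malliavin-derivative correction is handled in parallel using $D\sigma(u)=\sigma'(u)Du$, the Lipschitz continuity of $\sigma'$, and the $L^p$-estimates of $Du$ from \cite{HHLNT2017}.

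For $A_1$, the integration runs only over the short window $[t,t+\varepsilon]$, and the elementary Fourier estimate $\int_\R e^{-2(t+\varepsilon-s)\xi^2}|\xi|^{1-2H}\,d\xi\asymp(t+\varepsilon-s)^{H-1}$, together with the H\"older factor and the heat-kernel localization $|y-x|\lesssim\sqrt{\varepsilon}$, straightforwardly yields $\|A_1\|_{L^p(\Omega)}\lesssim\varepsilon^{H/2+\delta}$. The main obstacle is $A_2$, because $|s-t|$ now ranges up to $t$, so $\Phi$ is not pointwise small, while the kernel difference $p_{t+\varepsilon-s}-p_{t-s}$ is only genuinely small when $t-s\ll\varepsilon$. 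The standard device is the interpolation bound
\begin{equation*}
\int_\R \bigl|e^{-(t+\varepsilon-s)\xi^2}-e^{-(t-s)\xi^2}\bigr|^2|\xi|^{1-2H}\,d\xi\lesssim \varepsilon^{2\alpha}(t-s)^{H-1-2\alpha}\quad (0\le\alpha\le 1),
\end{equation*}
with $\alpha$ chosen large enough that $\varepsilon^{2\alpha}$ closes the gap to $\varepsilon^{H+2\delta}$ and small enough that the $s$-integral $\int_0^t(t-s)^{H-1-2\alpha+2\vartheta_0^-}\,ds$ converges. The two competing upper bounds on $\delta$ defining $\mathcal I$ --- namely $(2-H)\vartheta_0/(2(2+\vartheta_0))$ and $(2-H)(1-2H)/(2(5-2H))$ --- arise precisely as the thresholds enforced by this balance, the first along the temporal-H\"older route (via the exponent $\vartheta_0$ of $\Phi$) and the second along the spatial route (through the heat-kernel smoothing at scale $\sqrt{t-s}$, together with the integrability at infinity of $|\xi|^{1-2H}$). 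Assembling the contributions of $A_1$, $A_2$, and the Malliavin correction yields \eqref{eq main}.
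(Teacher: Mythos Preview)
Your decomposition $E=A_1+A_2$ has a genuine gap. The integrand of $A_2$ contains the factor $\sigma(u(t,x))$, which is $\mathcal F_t$-measurable but \emph{not} $\mathcal F_s$-measurable for $s<t$; hence $A_2$ is not a Walsh/It\^o integral in the sense of Proposition~\ref{prop 2.3}, and the BDG inequality \eqref{eq BDG} does not apply to it. In other words, the step ``the constant factor $\sigma(u(t,x))$ can be pulled inside the integral defining $\sigma(u(t,x))(\mathcal D_\varepsilon v)(t,x)$'' is illegitimate over $[0,t]$. You try to rescue this by invoking a Skorohod integral with a Malliavin correction, but \cite{HHLNT2017} does not supply the $L^p$-bounds on $Du$ (or on $D\sigma(u)=\sigma'(u)Du$) needed to control that correction in the nonlinear setting, so this route is not available from the results cited.

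The paper's proof is built precisely to sidestep this adaptedness obstruction. It splits the $[0,t]$-integral at the intermediate time $t(\varepsilon)=t-\varepsilon^\theta$ into $\mathcal J_{2,\theta}$ (over $[0,t(\varepsilon)]$, where no freezing is attempted and the kernel difference alone gives smallness) and $\mathcal J_{3,\theta}$ (over $[t(\varepsilon),t]$, where one freezes at $\sigma(u(t(\varepsilon),x))$, which \emph{is} $\mathcal F_{t(\varepsilon)}$-measurable, so the integrand stays predictable). Only afterwards is $\sigma(u(t(\varepsilon),x))$ compared to $\sigma(u(t,x))$ via Cauchy--Schwarz and H\"older continuity (Lemma~\ref{lem approximation}(d)), with the product $\sigma(u(t,x))\Lambda([0,t])$ kept as a product of random variables rather than a stochastic integral. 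The optimal $\theta=2/(2+\vartheta)$ balances parts (b) and (c) of Lemma~\ref{lem approximation}, which is where the first endpoint of $\mathcal I$ comes from; the second endpoint arises from the constraint $\vartheta<\tfrac12-H$ needed for the convergence in Lemma~\ref{lem green 2}, not from a spatial interpolation as you suggest. Your one-shot interpolation $\varepsilon^{2\alpha}(t-s)^{H-1-2\alpha}$ combined with $|s-t|^{\vartheta_0}$ would, even if $A_2$ made sense, reproduce only the first endpoint and miss the mechanism generating the second.
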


    By Theorem \ref{thm main}  and using similar arguments from \cite{FKM2015, WX2024}, we obtain the following results on the temporal regularity of
    $\{u(t,x)\}_{t\ge0}$ for any fixed $x\in \mathbb R$.

      \begin{proposition}\label{coro t LIL}  Assume that Condition \ref{cond A} holds.
        Choose and fix $t>0$ and $x\in \mathbb R$.   Then, with probability one,
        \begin{itemize}
        \item[(a)] (Khinchin's law of the iterated logarithm)
        \begin{equation}\label{Eq:LIL}
        \begin{split}
        \lim_{\e \to 0} \sup_{0\le r\le \e} \frac{|u(t+r, x)-u(t, x)|}{r^{H/2}\sqrt{2 \log\log(1/r)}}= \kappa  |\sigma(u(t, x))|.
        \end{split}
        \end{equation}
            \item[(b)]  (Chung's law  of the iterated logarithm)
        \begin{equation}\label{Eq: CLIL}
        \begin{split}
        \liminf_{\e \to 0}\sup_{0\le r\le \e}\frac{|u(t+r, x)-u(t, x)|}{(\e/  \log\log(1/\e))^{H/2}}=  \kappa  \lambda_{H}^{H/2} |\sigma(u(t, x))|,
        \end{split}
        \end{equation}
        where $\lambda_{H}$ is the small ball constant of a  fBm with index $\frac{H}{2}$(see, e.g., \cite[Theorem 6.9]{LS01}).
      \end{itemize}
             In addition,     if  $\beta_0>H$ in Condition \ref{cond A}, then the above two results  also  hold at $t=0$ with $\kappa$  
             in \eqref{Eq:LIL} being replaced by  
             \begin{align}\label{eq con kappa1}
             \widetilde\kappa= \left( \frac{ \Gamma(2H) }{2^{1-H} \Gamma(H)}\right)^{1/2}.
             \end{align}
                    \end{proposition}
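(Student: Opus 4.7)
The plan is to combine the local linearization of Theorem \ref{thm main} with the pinned-string decomposition \eqref{eq decom} to reduce the LILs for $\{u(t, x)\}_{t \ge 0}$ to the classical Khinchin and Chung LILs for a fractional Brownian motion of Hurst index $H/2$. Fix $t > 0$ and $x \in \mathbb R$, and decompose
\begin{align*}
u(t+r, x) - u(t, x) = A(r) + \sigma(u(t,x))\, (\mathcal D_r v)(t, x) + R(r),
\end{align*}
where $A(r) := [p_{t+r}(\cdot) - p_t(\cdot)] * u_0(x)$ and Theorem \ref{thm main} bounds $\|R(r)\|_{L^p(\Omega)} \le c_{1,2}\, r^{H/2 + \delta}$ for every $p \ge 1$ and $\delta \in \mathcal I$. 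Using \eqref{eq decom},
\begin{align*}
(\mathcal D_r v)(t, x) = \kappa \bigl[B^{H/2}(t+r) - B^{H/2}(t)\bigr] - \bigl[T(t+r) - T(t)\bigr],
\end{align*}
with $B^{H/2}$ a fBm of Hurst index $H/2$ that is independent of the process $T$, which is smooth on $(0, \infty)$.

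Each of the three correction terms $A$, $T(t+\cdot) - T(t)$, and $R$ must be shown to be $o(\e^{H/2})$ uniformly in $r \in [0, \e]$ almost surely. For $t > 0$, the smoothness of the heat semigroup in $r$ and of $T$ at $r = 0$ give $\sup_{0 \le r \le \e}\bigl(|A(r)| + |T(t+r) - T(t)|\bigr) = O(\e)$, which is negligible at the LIL scale. The remainder $R(r)$ is handled by evaluating Theorem \ref{thm main} along a geometric subsequence $\e_n = 2^{-n}$: Chebyshev (for any $p \ge 1$) and Borel-Cantelli yield $|R(\e_n)| \le \e_n^{H/2 + \delta/2}$ almost surely for all large $n$, and a standard monotonicity argument fills the gaps $r \in (\e_{n+1}, \e_n]$ using the time-H\"older continuity of $u$ of any order strictly below $H/2$ (a by-product of \cite{HHLNT2017}). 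This is the chaining scheme implemented in \cite{KSXZ2013, FKM2015, WX2024}. Substituting,
\begin{align*}
\sup_{0 \le r \le \e}|u(t+r, x) - u(t, x)| = \kappa\,|\sigma(u(t, x))| \sup_{0 \le r \le \e}|B^{H/2}(t+r) - B^{H/2}(t)| + o(\e^{H/2}),
\end{align*}
and \eqref{Eq:LIL} and \eqref{Eq: CLIL} follow from the classical Khinchin and Chung LILs for fBm of Hurst index $H/2$ (cf.\ \cite[Theorem 6.9]{LS01} for the small-ball constant $\lambda_H$).

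The case $t = 0$ under $\beta_0 > H$ requires two modifications. First, $\beta_0 > H$ together with \eqref{eq u0} and standard heat-semigroup estimates yield $|p_\e * u_0(x) - u_0(x)| = o(\e^{H/2})$, absorbing the initial-data term into the error. Second, the auxiliary process $T$ is not smooth at $r = 0$, so \eqref{eq decom} is replaced by a direct variance computation, which gives $\mathrm{Var}(v(\e, x)) \sim \widetilde\kappa^{\,2}\, \e^H$; the factor $2^{1-H}$ inside $\widetilde\kappa$ reflects the one-sided nature of the pinned-string representation at the time origin. The main obstacle throughout is the upgrade from the pointwise $L^p$-bound of Theorem \ref{thm main} to the almost-sure uniform bound on $\sup_{0 \le r \le \e}|R(r)|$ without losing the exponent $H/2$; the availability of all moments in Theorem \ref{thm main}, combined with the a priori H\"older regularity of $u$, is precisely what makes Borel-Cantelli succeed against the $\sqrt{\log\log(1/\e)}$ factor appearing in the LIL.
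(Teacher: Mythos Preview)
Your treatment of the case $t>0$ is essentially the paper's own argument: the paper packages the chaining/Borel--Cantelli upgrade of the $L^p$ bound into Lemma~\ref{lem interpolation}, then feeds the pinned-string decomposition \eqref{eq decom} and the classical fBm LILs exactly as you outline. Nothing to add there.

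The gap is at $t=0$. Your sentence ``\eqref{eq decom} is replaced by a direct variance computation'' does not yield a proof. By Lemma~\ref{lem decom}(b), the perturbation $T$ is itself $H/2$--self-similar, so $T(r)-T(0)=T(r)$ is of exact order $r^{H/2}$ and \emph{cannot} be absorbed into the $o(\e^{H/2})$ error as you do for $t>0$. Consequently the problem does not reduce to the LIL for the fBm $B^{H/2}$ at the origin; one must establish Khinchin's and Chung's LILs directly for the linear solution $r\mapsto v(r,x)$ at $r=0$, with the new constant $\widetilde\kappa$. Knowing $\mathrm{Var}(v(\e,x))=\widetilde\kappa^{\,2}\e^{H}$ gives only the correct scale; for the \emph{lower bound} in Khinchin's LIL one needs an independence (or near-independence) structure along a subsequence to make the second Borel--Cantelli lemma fire, and for Chung's LIL one needs the exact small-ball probability of $\{v(r,x)\}_{r\ge 0}$ near $0$, neither of which follows from a variance computation.

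The paper fills this gap by introducing a harmonizable representation $v_x(A,t)$ of the linear solution (Lemma~\ref{Lem:SHE-a1}) that decomposes $v(t,x)$ into independent frequency bands with quantitative error bounds \eqref{Eq:SHE-a1}. This is used in Proposition~\ref{coro t LIL0} to prove Khinchin's LIL at $t=0$ (the lower bound via independent pieces $v_n(t_n)$ along a carefully chosen subsequence $t_n$), while Chung's LIL at $t=0$ is imported from \cite{LW25}. Your proposal should either cite/invoke these results for $v$ at the origin, or supply an analogous argument; as written, the $t=0$ case is not proved.
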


 We remark that when $W$ is space-time white noise (i.e., $H=\frac{1}{2}$), Foondun et al. \cite{FKM2015} developed an approximation 
 approach to investigate local and variational properties of solutions $\{u_{\alpha}(t, x)\}_{x \in \mathbb{R}}$ to the stochastic fractional 
 heat equation \eqref{SFHE} for fixed $t>0$. Their key insight establishes that as $\varepsilon \downarrow 0$,
\begin{align*}
& u_{\alpha}(t, x+\varepsilon) - u_{\alpha}(t, x) \\
\approx & \left(2\Gamma(\alpha)\left|\cos\left(\frac{\alpha\pi}{2}\right)\right|\right)^{-\frac{1}{2}} \sigma(u_{\alpha}(t, x)) 
\left[B^{(\alpha-1)/2}(x+\varepsilon) - B^{(\alpha-1)/2}(x)\right],
\end{align*}
where the approximation holds in a suitable sense. By carefully controlling moment estimates of the approximation error, they deduced 
local properties of the spatial process $x \mapsto u(t,x)$ from the corresponding properties of fractional Brownian motion.
Recently, Wang \cite{Wang2024} extended  results in \cite{FKM2015}  to the stochastic fractional heat equation \eqref{SFHE} driven 
by a centered Gaussian field that is white in time and exhibits fractional Brownian motion covariance structure with Hurst parameter 
$H \in (\frac{1}{2}, 1)$ in space. Recently, Hu and Lee  \cite{HL25} studied local  spatio-temporal regularities of the  solutions to  
nonlinear parabolic SPDEs driven by the space-time white noise on  a bounded interval by using  the method of linearization.
An interesting problem is to adapt those approximation methods to study the spatial or spatio-temporal regularities 
of the solution $\{u(t,x)\}_{t\ge0, x\in\mathbb{R}}$ to the rough stochastic heat equation \eqref{SHE rough}.   
 
We organize the remainder of this paper as follows. In Section 2, we present preliminary results on stochastic integration and 
properties of solutions to Equation \eqref{SHE rough}, adapted from \cite{HHLNT2017}. In Section \ref{sec proof}, we introduce 
a key lemma to control approximation errors and adapt techniques from \cite{KSXZ2013, WX2024} to prove Theorem \ref{thm main}; 
we defer the detailed proof of this lemma to Section 4. Next, we establish Proposition \ref{coro t LIL} by combining our strong 
approximation error estimates with Khinchin's and Chung's laws of the iterated logarithm for the linear SHE  in Section 5. In Section 
6, we analyze temporal quadratic  variation  for the nonlinear SHE. Finally, we compile technical auxiliary results in the Appendix.

    \section{Preliminaries}\label{lemmas}
    \subsection{Covariance structure and stochastic integration}
    Recall some notations from \cite{HHLNT2017}.   Let $ \cD(\RR) $ denote the space of real-valued, infinitely differentiable functions with compact support on $\mathbb{R}$.
    The Fourier transform of a function $f\in\cD(\RR)$ is defined by
     $$
      \cF f(\xi):=\int_{\RR} e^{-i\xi x}f(x) dx.
    $$

     Let   $(\Omega,\cF,\mathbb P)$ be a complete probability space and let    $\mathcal{D}(\mathbb{R}_{+}\times \mathbb{R})$  denote  the space of real-valued, infinitely differentiable functions with compact support on $\mathbb{R}_{+}\times \mathbb{R}$.
    The noise $\dot W$ is  a zero-mean Gaussian family $\{W(\phi), \phi\in\cD(\RR_{+}\times \RR)\}$  with   the covariance structure given by
     \begin{equation}\label{CovStru}
       \EE\left[ W(\phi)W(\psi)\right]=c_{1,1}\int_{\RR_{+}\times \RR} \cF \phi(s,\xi) \overline{\cF \psi(s,\xi)}\cdot |\xi|^{1-2H} ds d\xi,
     \end{equation}
     where $H\in \left(\frac14 , \frac 12\right)$, $c_{1,1}$ is given in \eqref{e.c1},
     and $\cF \phi(s,\xi)$ is the Fourier transform of the function $\phi(s,x)$ with respect to the spatial variable $x$.

     Let $\mathcal H$  be  the Hilbert space obtained by completing $\mathcal D(\RR) $ with respect to the following scalar  product:      	
     \begin{equation}\label{eq H product}
    \begin{split}
         \langle\phi,\psi\rangle_{\mathcal H}
         :=&\,c_{1,1}\int_{\RR} \cF \phi(\xi) \overline{\cF \psi(\xi)}  \cdot |\xi|^{1-2H}d\xi\\
         =&\, H\left(\frac 12-H\right) \int_{\RR^2}\left[\phi(x+y)-\phi(x)\right]\cdot \left[\psi(x+y)-\psi(x)\right]\cdot |y|^{2H-2} dxdy,
            \end{split}
       \end{equation}
       for any $\phi,\psi\in \mathcal D(\mathbb R)$. 
  The second equality in \eqref{eq H product} holds by \cite[Proposition 2.8]{BJQ2015}.

      For any $t\ge0$, let $\cF_t$ be the filtration generated by $W$, that is,
    $$
     \cF_t:=\sigma\big\{W(\phi): \phi\in \cD([0,t]\times\RR)\big\},
    $$
    where $\cD([0,t]\times\RR)$ is  the space of real-valued infinitely differentiable functions on $[0,t]\times\RR$ with compact support.

    \begin{definition}\cite[Definition 2.2]{HHLNT2017}\label{ElemP}
    An elementary process $g$ is a process given by
      $$
        g(t,x)=\sum_{i=1}^{n}\sum_{j=1}^{m} X_{i,j}\1_{(a_i,b_i]}(t)\1_{(h_j,l_j]}(x),
       $$
       where $n$ and $m$ are finite positive integers, $0\leq a_1<b_1<\cdots<a_n<b_n<\infty$, $h_j<l_j$, and $X_{i,j}$ are $\cF_{a_i}$-measurable
       random variables for $i=1,\dots,n$, $j=1,\dots,m$. The stochastic integral of  an elementary  process $g$ with respect to $W$ is defined as
       \begin{equation*}
         \begin{split}
           \int_{\RR_{+}}\int_{\RR} g(t,x)W(dt,dx) 
          := \, \sum_{i=1}^{n}\sum_{j=1}^{m}  X_{i,j}\left[W(b_i,l_j)-W(a_i,l_j)-W(b_i,h_j)+W(a_i,h_j)\right].
         \end{split}
       \end{equation*}
     \end{definition}

    Hu et al. \cite[Proposition 2.3]{HHLNT2017}  extended the notion of the stochastic integral with respect to $W$ to a broader class of adapted processes
     in the following way:
    \begin{proposition}(\cite[Proposition 2.3]{HHLNT2017})\label{prop 2.3}
       Let $\Lambda_{H}$ be the space of predictable processes $g$ defined on $\RR_{+}\times\RR$ such that  almost surely $g \in L^2(\RR_+; \mathcal{H})$  and
       $\EE[\int_{\RR_+}\|g(s)\|_{\mathcal H}^2ds]<\infty$.  Then,  the following items hold:
       \begin{itemize}
           \item[(i)]
      The space of the elementary processes
        defined in Definition \ref{ElemP} is dense in $\Lambda_{H}$;
          \item[(ii)]
     For any  $g\in\Lambda_{H}$, the stochastic integral $\int_{\RR_{+}}\int_{\RR} g(s,x)W(ds,dx)$ is defined  as the $L^2(\Omega)$-limit of  Riemann sums along
    elementary processes approximating $g$ in $\Lambda_H$, and
        \begin{equation}\label{Isometry}
         \EE\lk\lc\int_{\RR_{+}}\int_{\RR} g(s,x)W(ds,dx)\rc^2\rk=\EE \left[\int_{\RR_+}\|g(s)\|_{\mathcal H}^2ds\right].
        \end{equation}
    \end{itemize}
    \end{proposition}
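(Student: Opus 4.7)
The plan is to carry out the standard It\^o-isometry extension procedure: define the map $g\mapsto\int g\,dW$ on the linear space $\ce$ of elementary processes via Definition \ref{ElemP}, verify the isometry \eqref{Isometry} on $\ce$, and then establish that $\ce$ is dense in $\Lambda_{H}$, so that the integral extends by continuity.

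For Step 1 (isometry on $\ce$), take $g=\sum_{i=1}^n\sum_{j=1}^m X_{i,j}\1_{(a_i,b_i]}(t)\1_{(h_j,l_j]}(x)$ and write $\Delta_{i,j}:=W(b_i,l_j)-W(a_i,l_j)-W(b_i,h_j)+W(a_i,h_j)$ for its rectangular increment, which is mean-zero and independent of $\cF_{a_i}$ because it depends only on $W$ restricted to $(a_i,b_i]\times\RR$. Since the time intervals $(a_i,b_i]$ are pairwise disjoint by Definition \ref{ElemP}, for $i<i'$ we have $a_{i'}\geq b_i$, so $X_{i,j}$, $X_{i',j'}$ and $\Delta_{i,j}$ are all $\cF_{a_{i'}}$-measurable while $\Delta_{i',j'}$ is independent of $\cF_{a_{i'}}$; conditioning on $\cF_{a_{i'}}$ therefore forces $\EE[X_{i,j}X_{i',j'}\Delta_{i,j}\Delta_{i',j'}]=0$. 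Hence
$$
\EE\Bigl[\Bigl(\int g\,dW\Bigr)^2\Bigr]=\sum_{i=1}^n\sum_{j,j'=1}^m\EE[X_{i,j}X_{i,j'}]\,\EE[\Delta_{i,j}\Delta_{i,j'}].
$$
A direct application of \eqref{CovW} yields $\EE[\Delta_{i,j}\Delta_{i,j'}]=(b_i-a_i)\cdot\tfrac12(|l_j-h_{j'}|^{2H}+|h_j-l_{j'}|^{2H}-|l_j-l_{j'}|^{2H}-|h_j-h_{j'}|^{2H})$. Matching with $\EE\int_{\RR_+}\|g(s)\|_\cH^2\,ds=\sum_i(b_i-a_i)\sum_{j,j'}\EE[X_{i,j}X_{i,j'}]\langle\1_{(h_j,l_j]},\1_{(h_{j'},l_{j'}]}\rangle_\cH$ then reduces \eqref{Isometry} to the deterministic identity
$$
\langle\1_{(h,l]},\1_{(h',l']}\rangle_\cH=\tfrac12\bigl(|l-h'|^{2H}+|h-l'|^{2H}-|l-l'|^{2H}-|h-h'|^{2H}\bigr),
$$
which I would derive on the Fourier side from the first expression for $\langle\cdot,\cdot\rangle_\cH$ in \eqref{eq H product}, using $\cF\1_{(h,l]}(\xi)=(e^{-i\xi h}-e^{-i\xi l})/(i\xi)$ together with the one-dimensional integral $\int_\RR(1-\cos u)|u|^{-1-2H}du$, whose value is precisely what the choice of $c_{1,1}$ in \eqref{e.c1} is calibrated to neutralize.

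Step 2 (density, item (i)) is where the main obstacle lies. To approximate an arbitrary $g\in\Lambda_H$ by elementary processes in the norm $\|g\|_{\Lambda_H}^2:=\EE\int_0^\infty\|g(s)\|_\cH^2\,ds$, I would proceed in three stages. (a) Truncate to reduce to $g$ bounded and compactly supported in $t$. (b) Discretize in time via $g_n(s,\cdot):=\sum_k g(t_k^n,\cdot)\1_{(t_k^n,t_{k+1}^n]}(s)$ along a shrinking grid; this preserves predictability, and $g_n\to g$ in $L^2(\RR_+;\cH)$ first for $s\mapsto g(s,\cdot)$ continuous in $\cH$, then for general $g$ by a standard mollification-in-time argument. (c) For each fixed $t_k^n$, approximate the $\cF_{t_k^n}$-measurable, $\cH$-valued random element $g(t_k^n,\cdot)$ by a finite random linear combination $\sum_j X^{(n)}_{k,j}\1_{(h_j^n,l_j^n]}$ with $\cF_{t_k^n}$-measurable scalar coefficients. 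Step (c) rests on the deterministic fact that $\{\1_{(h,l]}:h<l\}$ is total in $\cH$; since $\cH$ is, by definition, the completion of $\cD(\RR)$, it is enough to approximate any $\phi\in\cD(\RR)$ in the norm \eqref{eq H product} by its Riemann sums $\phi_n:=\sum_k\phi(y_k)\1_{(y_k,y_{k+1}]}$, and this can be checked on the Fourier side via dominated convergence using $\cF\phi_n(\xi)\to\cF\phi(\xi)$ pointwise together with the rapid decay of $\cF\phi$ to dominate against the weight $|\xi|^{1-2H}$. The subtlety in the regime $H<1/2$ is that $\cH$ is a space of distributions rather than of functions, so the approximation must be carried out at the level of $\cD(\RR)$ and cannot be phrased as a pointwise statement; once the deterministic approximation is in hand, a standard measurable-selection argument delivers the random coefficients needed in (c).

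Step 3 (extension, item (ii)) is then routine. Given $g\in\Lambda_H$, Step 2 provides $g_n\in\ce$ with $\|g_n-g\|_{\Lambda_H}\to 0$, and applying Step 1 to $g_n-g_m$ shows that $\{\int g_n\,dW\}_n$ is Cauchy in $L^2(\Omega)$. The limit is independent of the approximating sequence by one further application of the isometry, so $\int g\,dW$ is unambiguously defined, and passing to the limit in \eqref{Isometry} transfers the identity to all of $\Lambda_H$.
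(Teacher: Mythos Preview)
The paper does not prove this proposition at all: it is stated as a preliminary result and attributed to \cite[Proposition 2.3]{HHLNT2017}, with no argument given. So there is no ``paper's own proof'' to compare against.

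Your outline is the standard It\^o--Walsh construction and is essentially the argument one finds in the cited reference. The isometry computation in Step~1 is correct, and the reduction to the indicator identity in $\cH$ is exactly how the covariance structure \eqref{CovW} and the choice of $c_{1,1}$ are reconciled. Step~3 is indeed routine once Steps~1 and~2 are in place. The only place where you should be slightly more careful is Step~2(b): the time-discretization $g_n(s,\cdot)=\sum_k g(t_k^n,\cdot)\1_{(t_k^n,t_{k+1}^n]}(s)$ need not be predictable as written, since $g(t_k^n,\cdot)$ is only $\cF_{t_k^n}$-measurable, not $\cF_{t_k^n-}$-measurable, and more importantly for a general predictable $g$ the section $g(t_k^n,\cdot)$ may not even be a well-defined $\cH$-valued random variable. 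The usual fix is to replace point evaluations by averages $\frac{1}{t_k^n-t_{k-1}^n}\int_{t_{k-1}^n}^{t_k^n}g(r,\cdot)\,dr$, which are $\cF_{t_k^n}$-measurable and make sense for any $g\in L^2(\RR_+;\cH)$; this is the approach taken in \cite{HHLNT2017} and in the classical Walsh construction. With that adjustment your plan is correct.
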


      Let $(B,\| \cdot \|_B)$ be a Banach space  with the norm $\| \cdot \|_B$.
     For  any  function $f:\RR\rightarrow B$,  we define the norm $\cN_{\frac{1}{2}-H}^{B}f(x)$ as follows:
     \begin{equation}\label{NBNorm}
       \cN_{\frac{1}{2}-H}^{B}f(x):=\lt(\int_{\RR}\|f(x+h)-f(x)\|_B^2\cdot |h|^{2H-2}dh\rt)^{\frac 12},
     \end{equation}
     provided  the integral is convergent.
     When $B=\RR$, we abbreviate this notation  as  $\cN_{\frac{1}{2}-H}f$.
    When $B=L^p(\Omega)$, we  denote $ \cN_{\frac{1}{2}-H}^{B}$ by $\cN_{\frac{1}{2}-H,\,p}$, that is,
     \begin{equation}\label{NpNorm}
       \cN_{\frac{1}{2}-H,\,p}f( x):=\lt(\int_{\RR}\|f(x+h)-f(x)\|^2_{L^p(\Omega)} \cdot |h|^{2H-2}dh\rt)^{\frac 12}.
     \end{equation}
 \subsection{Stochastic heat equation}
  Recall that the predictable $\sigma$-field  $\mathcal P$  is generated by the family of random variables $\{ X(\omega)\cdot {\bf 1}_{(s,t]}(u)\cdot {\bf 1}_A(x)\}$ where $
  t>s\ge 0$,  $A\in \mathcal B(\mathbb R)$,  and $X\in \mathcal F_s$.   A random field $u: \Omega\times \mathbb R_+\times \mathbb R\rightarrow \mathbb R $ is called
  predictable if $u$ is $\mathcal P$-measurable.

    \begin{definition}\cite[Definition 2.4]{HHLNT2017} \label{def-sol-sigma}
    Let $u=\{u(t,x)\}_{t\ge0, \, x \in \mathbb{R}}$ be a real-valued predictable stochastic field such that, for any fixed $t\ge 0$ and $x\in\RR$,
    $$\left\{p_{t-s}(x-y)\sigma(u(s,y)) \1_{[0,t]}(s)\right\}_{0 \leq s \leq t, y \in \mathbb{R} }$$
    is an element of $\Lambda_{H}$, where $p_t(x)$ is given by \eqref{eq heat}. We say that $u$ is a mild solution to  SHE \eqref{SHE rough} if,  for all $t \in [0,T]$
    and $x\in \mathbb{R}$,
    \begin{equation}\label{eq  SHE solution}
    u(t, x)=p_t*u_0(x) + \int_0^t \int_{\mathbb{R}}p_{t-s}(x-y)\sigma(u(s,y)) W(ds,dy) \quad a.s.,
    \end{equation}
    where the stochastic integral is understood in the sense of Proposition \ref{prop 2.3}.
    \end{definition}

  \begin{remark}\label{rem u0}
        Note that Condition \ref{cond A} automatically implies  that \eqref{eq u0} holds for all $p\ge p_0$.
    Indeed, by Proposition A.1 in \cite{HHLNT2017},      we know that  Condition \ref{cond A} implies that the initial function  $u_0$ is H\"older continuous with order $\beta_0$.
    Consequently,  by H\"older's inequality,  we obtain the following for any $p>p_0$, 
    \begin{equation}\label{eq u0p}
    \begin{split}
      &\,  \int_{|h|\le 1} \left(\int_{\mathbb R}|u_0(x)-u_0(x+h)|^p dx\right)^{\frac{2}{p} } |h|^{2H-2}dh\\
     \le &\,  C   \int_{|h|\le 1} \left(\int_{\mathbb R}|u_0(x)-u_0(x+h)|^{p_0} dx\right)^{\frac{2}{p} } |h|^{\frac{2\beta_0(p-p_0)}{p} + 2H-2}dh\\
           \le &\,  C  \left(  \int_{|h|\le 1} \left(\int_{\mathbb R}|u_0(x)-u_0(x+h)|^{p_0} dx\right)^{\frac{2}{p_0 } } |h|^{  2H-2}dh \right)^{\frac{p_0}{p}}\\
           &\,\,\, \times          \left(  \int_{|h|\le 1} |h|^{2\beta_0+  2H-2}dh \right)^{1-\frac{p_0}{p}}.
            \end{split}
            \end{equation}
  Both integrals in the last term are finite due to \eqref{eq u0} and the fact that $\beta_0>\frac12-H$ as stated in Condition \ref{cond A}. Therefore,
  by \eqref{eq u0p} and the fact that  $u_0\in   L^{\infty}(\mathbb R)$  according to  Condition \ref{cond A}, we know that \eqref{eq u0} holds for all $p\ge p_0$.
     \end{remark}

The following result about the solution of Equation \eqref{SHE rough} follows from  \cite[Theorems 4.5 and 4.7]{HHLNT2017} and Remark \ref{rem u0} above.   
          \begin{theorem} \label{them existence}
     Under Condition \ref{cond A},   Equation \eqref{SHE rough} admits a unique mild solution satisfying the following moment bounds: for any $p \ge  p_0$ and $T>0$,
        \begin{equation}\label{eq bound}
    \sup_{x\in \mathbb R, t\in [0,T]}\|u(t,x)\|_{L^p(\Omega)}+ \sup_{x\in \mathbb R,t\in [0,T] }\mathcal N_{1/2-H, p} u(t,x)<\infty,
    \end{equation}
and  for all $s, t \in [0,T], x,y \in \RR$,
     \begin{equation}\label{eq Holder}
    \|u(t,x)-u(s,y)\|_{L^p(\Omega)} \leq c_{2,1} \left(|t-s|^{\vartheta_0}+ |x-y|^{2\vartheta_0}\right),
    \end{equation}
     where $\vartheta_0$ is defined by \eqref{eq vartheta}.
       \end{theorem}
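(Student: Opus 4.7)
The plan is to invoke Theorems 4.5 and 4.7 of \cite{HHLNT2017} directly, after checking that the initial condition satisfies the hypotheses uniformly in the relevant moment exponent $p\ge p_0$. The only verification beyond a pure citation is the $L^p$-analogue of \eqref{eq u0}, which is exactly what Remark \ref{rem u0} supplies: one interpolates between the $L^{p_0}$ bound \eqref{eq u0} and the uniform bound $u_0\in L^{\infty}$ using the $\beta_0$-H\"older continuity of $u_0$ (which itself follows from \eqref{eq u0} via Proposition A.1 of \cite{HHLNT2017}). Once this extension is in hand, existence, uniqueness, the moment bound \eqref{eq bound}, and the H\"older estimate \eqref{eq Holder} follow immediately.

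To explain how the underlying argument in \cite{HHLNT2017} runs in this rough regime, one sets up a Picard iteration
\[
u^{(n+1)}(t,x) = p_t \ast u_0(x) + \int_0^t \int_{\mathbb R} p_{t-s}(x-y)\, \sigma(u^{(n)}(s,y))\, W(ds,dy),
\]
and closes it in the double norm $\sup_{(t,x)\in[0,T]\times\mathbb R}\bigl(\|u(t,x)\|_{L^p(\Omega)} + \mathcal N_{1/2-H, p} u(t,x)\bigr)$. The need for the second seminorm is unavoidable because, by the isometry \eqref{Isometry} together with the explicit form \eqref{eq H product}, the $L^p$ norm of the stochastic integral is dominated by a quadratic form in spatial first-differences of $p_{t-s}(x-\cdot)\sigma(u^{(n)}(s,\cdot))$ weighted by $|h|^{2H-2}$. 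Expanding the product yields a chain-rule-type bound of the shape $\mathcal N_{1/2-H,p}\sigma(u)\lesssim (1+\|u\|_{L^{2p}})\,\mathcal N_{1/2-H,2p} u$, which relies essentially on Lipschitz continuity of $\sigma'$ and on $\sigma(0)=0$. Substituting back closes the iteration, and a Gronwall-type argument in $t$ produces uniform-in-$n$ bounds together with the Cauchy property for $\{u^{(n)}\}$.

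For the H\"older estimate \eqref{eq Holder}, the natural route is to split $u(t,x)-u(s,y)$ into a deterministic term $p_t\ast u_0(x)-p_s\ast u_0(y)$, controlled by heat-kernel calculus using the $\beta_0$-H\"older regularity of $u_0$, and a stochastic remainder whose increments are estimated via \eqref{Isometry} together with the moment bound \eqref{eq bound} established at the previous step. Splitting the stochastic remainder into the contribution from the kernel difference $p_{t-r}-p_{s-r}$ on $[0,s]$ and the extra integral on $[s,t]$, standard heat-kernel manipulations produce the combined regularity exponent $\vartheta_0=\tfrac12(H\wedge\beta_0)$ in time and $2\vartheta_0$ in space.

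The main obstacle in executing this program from scratch is the chain-rule-type estimate for $\mathcal N_{1/2-H,p}\sigma(u)$: because the $\mathcal H$-norm \eqref{eq H product} is a genuine quadratic form in first-order increments, composing with a nonlinear $\sigma$ generates cross terms that cannot be controlled by Lipschitz continuity of $\sigma$ alone, which is precisely why Condition (A2) demands the stronger smoothness on $\sigma'$. Any attempt to weaken this hypothesis appears to break the closure of the iteration in the rough regime $H<\tfrac12$.
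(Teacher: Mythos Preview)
Your proposal is correct and matches the paper's own treatment: the paper likewise obtains Theorem~\ref{them existence} by direct citation of \cite[Theorems 4.5 and 4.7]{HHLNT2017} combined with Remark~\ref{rem u0}, which supplies the extension of \eqref{eq u0} to all $p\ge p_0$. Your additional commentary on the Picard iteration and the role of the $\mathcal N_{1/2-H,p}$ seminorm is accurate in spirit, though note that since $\sigma$ is Lipschitz with $\sigma(0)=0$ one actually has the simpler bound $\mathcal N_{1/2-H,p}\sigma(u)\le L_\sigma\,\mathcal N_{1/2-H,p}u$ directly; the Lipschitz assumption on $\sigma'$ in (A2) is needed at a different point of the argument in \cite{HHLNT2017}.
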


      \begin{proposition}\label{HBDG}   $($\cite[Proposition 3.2]{HHLNT2017}$)$
       Let $W$ be the Gaussian noise with the covariance given by \eqref{CovStru}, and let $f\in\Lambda_H$  be a predictable random field. Then, for any $p\geq2$, 
      \begin{equation}\label{eq BDG}
         \begin{split}
            \lt\|\int_{0}^{t}\int_{\RR}f(s,y) W(ds,dy)\rt\|_{L^p(\Omega)}
         \leq \sqrt{4p}c_{2,2}\lt(\int_{0}^{t}\int_{\RR}\lk\cN_{\frac 12-H,\,p}f(s,y)\rk^2dyds\rt)^{\frac 12},
         \end{split}
       \end{equation}
       where $c_{2,2}$ is a constant depending only on $H$, and  $\cN_{\frac 12-H,\,p}f(s,y)$ denotes the application of $\cN_{\frac 12-H,\,p} $  to the spatial variable $y$.
     \end{proposition}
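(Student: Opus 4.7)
The plan is to combine the martingale structure of the stochastic integral in the time variable with the second (integral) representation of the inner product on $\mathcal H$ given by the second line of \eqref{eq H product}. Fix $t>0$ and set
\begin{equation*}
M_r:=\int_0^r\!\int_{\RR}f(s,y)\,W(ds,dy),\qquad 0\le r\le t.
\end{equation*}
Because $W$ is white in time and $f$ is predictable, one verifies (first for elementary $f$ and then by the density/approximation in Proposition \ref{prop 2.3}) that $\{M_r\}_{r\in[0,t]}$ is a continuous square-integrable $(\mathcal F_r)$-martingale whose quadratic variation, by the $L^2$-isometry \eqref{Isometry}, equals
\begin{equation*}
\langle M\rangle_r=\int_0^r\|f(s,\cdot)\|_{\mathcal H}^2\,ds.
\end{equation*}

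The Burkholder-Davis-Gundy inequality for continuous martingales in the form with Carlen-Kree type constant (so that the $p$-dependence is $\sqrt{4p}$) then gives
\begin{equation*}
\|M_t\|_{L^p(\Omega)}\le \sqrt{4p}\,\bigl\|\langle M\rangle_t^{1/2}\bigr\|_{L^p(\Omega)}=\sqrt{4p}\,\|\langle M\rangle_t\|_{L^{p/2}(\Omega)}^{1/2}.
\end{equation*}
The second equality in \eqref{eq H product} (applied at each fixed $s$) provides the pointwise identity
\begin{equation*}
\|f(s,\cdot)\|_{\mathcal H}^2=H\Bigl(\tfrac12-H\Bigr)\int_{\RR^2}\bigl|f(s,x+h)-f(s,x)\bigr|^2|h|^{2H-2}\,dx\,dh,
\end{equation*}
so that $\langle M\rangle_t$ is represented as a nonnegative integral of $|f(s,x+h)-f(s,x)|^2$ against the product measure $|h|^{2H-2}\,ds\,dx\,dh$.

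Since $p\ge 2$, Minkowski's integral inequality with exponent $p/2\ge 1$ allows one to move the $L^{p/2}(\Omega)$-norm inside the $(s,x,h)$-integration; using the identity $\bigl\||f(s,x+h)-f(s,x)|^2\bigr\|_{L^{p/2}(\Omega)}=\|f(s,x+h)-f(s,x)\|_{L^p(\Omega)}^2$ and Fubini to swap the $h$- and $x$-integrals, the inner $h$-integral is exactly $[\cN_{1/2-H,p}f(s,x)]^2$ by definition \eqref{NpNorm}. Absorbing the factor $H(1/2-H)$ into $c_{2,2}^2$ and taking square roots delivers \eqref{eq BDG}. The only delicate point is securing the explicit constant $\sqrt{4p}$, which requires the sharp (Carlen-Kree / Davis) version of BDG with tracked $p$-dependence; the remaining ingredients are the martingale identification of $\langle M\rangle_r$, the integral form of $\|\cdot\|_{\mathcal H}^2$, and a routine Minkowski-Fubini manipulation.
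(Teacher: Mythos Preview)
The paper does not give its own proof of this proposition; it simply quotes it from \cite[Proposition~3.2]{HHLNT2017}. Your sketch is the standard argument used there: identify the stochastic integral as a continuous $(\mathcal F_r)$-martingale with quadratic variation $\int_0^r\|f(s,\cdot)\|_{\mathcal H}^2\,ds$, apply BDG with the Carlen--Kree constant $2\sqrt p=\sqrt{4p}$, rewrite $\|f(s,\cdot)\|_{\mathcal H}^2$ via the second line of \eqref{eq H product}, and push the $L^{p/2}(\Omega)$-norm inside by Minkowski. This is correct and matches the cited proof; the only mild omission is that the second identity in \eqref{eq H product} is stated for $\phi\in\mathcal D(\RR)$ and must be extended by density to the random sections $f(s,\cdot)\in\mathcal H$, which is routine.
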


We note that Chen and Dalang \cite[Proposition 3.1]{CD15} and Chen and Kim \cite[Proposition 2.2]{CK2019} provided easily verifiable conditions to check
the predictability of random fields. In particular,   the Burkholder-Davis-Gundy (BDG, for short) inequality in Proposition \ref{HBDG}  is valid for the solution $u=\{u(t, x)\}_{t\ge0, x\in \mathbb R} $ to
\eqref{SHE rough}, since $u$ is jointly measurable with respect to $\mathcal B(\mathbb R_+\times \mathbb R)\times \mathcal F$;  $u$ is adapted, i.e., for all
$(s, x)\in (0,\infty)\times \mathbb R$, $u(s,x)$  is $\mathcal F_s$-measurable; and $u$ satisfies the moment bounds in \eqref{eq bound}.

  \section{Proofs of  main results}\label{sec proof}

   In this section, we will employ a modified argument from  \cite{KSXZ2013, WX2024} to estimate the temporal increment. 
       For any fixed $x\in\mathbb R$,   the increment of  the process $t\mapsto u(t, x)$ can be written as
      \begin{align}\label{eq Diff X}
    u(t+\varepsilon, x)-u(t, x)  =\, \left[ p_{t+\e}(\cdot)-p_t(\cdot)\right]*u_0(x)+  \mathcal J_1+\mathcal J_{2, \theta}+\mathcal J_{3, \theta},
      \end{align}
      where
      \begin{align}
            \mathcal J_1:=&\, \int_t^{t+\varepsilon} \int_{\mathbb R} \,p_{t+\varepsilon-s}(x-y) \sigma\left(u(s, y)\right)W(ds,dy); \label{eq J1} \\
        \mathcal J_{2, \theta}:=&\, \int_0^{t-\varepsilon^{\theta}}\int_{\mathbb R}\left[p_{t+\varepsilon-s}(x-y)-p_{t-s}(x-y)\right]\sigma(u(s, y))W(ds,dy);   \label{eq J2} \\
          \mathcal J_{3, \theta}:=&\, \int_{t-\varepsilon^{\theta}}^t\int_{\mathbb R}\left[p_{t+\varepsilon-s}(x-y)-p_{t-s}(x-y)\right]\sigma(u(s, y))W(ds,dy).  \label{eq J3}
                   \end{align}
           Here,  the parameter $\theta\in (0,1)$ is fixed, and its optimal value will be specified later.
    For any $t>0$ and for any sufficiently small $\varepsilon$,   we define $t(\varepsilon):=t-\varepsilon^{\theta}$ for convenience.

 \subsection{An important lemma}
    We outline the idea of approximating the three terms in \eqref{eq Diff X}, inspired by \cite{KSXZ2013, WX2024}:
             \begin{itemize}
             \item Firstly,  since  $p_{t+\e-s}(x-y)$ behaves like  a point mass  when $s\in (t, t+\varepsilon)$ and $\varepsilon$ is small,
             and $u$ is H\"older continuous, we expect that
  $\mathcal J_{1}\approx  \widetilde{\mathcal J}_{1}$, where
          \begin{align}\label{eq appr J1}
          \widetilde{ \mathcal J}_{1}:= \sigma\left(u(t, x)\right)\int_t^{t+\varepsilon} \int_{\mathbb R} \,p_{t+\varepsilon-s}(x-y)  W(ds,dy).
       \end{align}
       \item
     Secondly,  we will show that the quantity $\mathcal J_{2, \theta}$ becomes small as long as $\theta\in (0,1)$ is chosen appropriately.

         \item Thirdly,  since  $p_{t+\e-s}(x-y)$ and $p_{t-s}(x-y)$ both act as    point masses  when $s\in (t(\e), t)$,  we expect that
         $\mathcal J_{3, \theta}\approx \mathcal J_{3, \theta}'$, where
          \begin{align}\label{eq appr 2}
      \mathcal J_{3, \theta}':=&\, \sigma\left(u\left(t(\varepsilon),x\right)\right) \int_{t(\varepsilon)}^t\int_{\mathbb R}\left[p_{t+\varepsilon-s}(x-y)-p_{t-s}(x-y)\right] W(ds,dy).
       \end{align}
       \item  Finally, by the H\"older continuity of $u$, we  expect that $\mathcal J_{3, \theta}' \approx  \widetilde{\mathcal J}_{3,\theta}$,
       where
  \begin{equation}\label{eq appr 3}
           \widetilde{\mathcal J}_{3, \theta}:= \, \sigma(u(t, x))\int_{t(\varepsilon)}^t\int_{\mathbb R}\left[p_{t+\varepsilon-s}(x-y)-p_{t-s}(x-y)\right]W(ds,dy).
           \end{equation}
         \end{itemize}
         More precisely, we quantify the sizes of the approximation errors by controlling their moments, as stated in the following lemma.
  \begin{lemma}\label{lem approximation}   For any  $T>0$ and  $p\ge p_0$, the following statements hold:
 \begin{itemize}
 \item[(a)] For any $\vartheta\in (0,  \vartheta_0]\cap \left(0,  \frac12-H\right)$,  there exists a finite positive constant  $c_{3,1}$,  independent  of  $\varepsilon\in (0,1)$,
 such that
         \begin{equation}\label{eq J2 diff1}
      \sup_{t\in [0,T]} \sup_{x\in \mathbb R}  \left\|\mathcal J_1-\widetilde{ \mathcal J}_{1}\right\|_{L^{p}(\Omega)} ^2 \le c_{3,1} \varepsilon^{ H+\vartheta}.
      \end{equation}

 \item[(b)]
  There exists a finite positive  constant $c_{3,2}$, independent of
      $\varepsilon\in (0,1)$, such that
      \begin{equation}\label{eq J1 1}
     \sup_{t\in [0,T]}  \sup_{x\in \mathbb R}  \left\|\mathcal J_{2, \theta} \right\|_{L^{p}(\Omega)}^2  \le  c_{3,2}\varepsilon^{[H+(2-H)(1-\theta)]\wedge \frac12}.
      \end{equation}

 \item[(c)] For any $\vartheta\in (0,  \vartheta_0]\cap \left(0,  \frac12-H\right)$, there exists a
         finite positive constant $c_{3,3}$ such that  for all  $\varepsilon,\theta\in (0,1)$,
              \begin{equation}\label{eq J 3 est}
               \sup_{t\in [0,T]} \sup_{x\in \mathbb R}\left\|\mathcal J_{3, \theta}-\mathcal J_{3, \theta}' \right\|_{L^{p}(\Omega)}^2 \le c_{3,3} \varepsilon^{ (H+\vartheta )\theta}.
                  \end{equation}

 \item[(d)] There exists a finite positive constant $c_{3,4}$, such that for all  $\varepsilon,\theta\in (0,1)$,
              \begin{align}\label{eq J14 est}
               \sup_{t\in [0,T]}\sup_{x\in \mathbb R} \left\| \mathcal J_{3, \theta}' - \widetilde{\mathcal J}_{3,\theta}\right\|_{L^{p}(\Omega)}^2
               \le c_{3,4}\varepsilon^{(H +2\vartheta_0)\theta}.
              \end{align}
  \end{itemize}
        \end{lemma}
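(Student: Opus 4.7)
\textbf{Proof plan for Lemma \ref{lem approximation}.} All four bounds share a common skeleton: express the quantity as a stochastic integral with respect to $W$, invoke the BDG inequality of Proposition \ref{HBDG} to reduce the $L^p(\Omega)$ norm to a space--time integral of $[\cN_{\frac12-H,p}f(s,y)]^2$, and then use the a priori moment bound \eqref{eq bound}, the H\"older estimate \eqref{eq Holder}, and the Lipschitz property of $\sigma$ from Condition \ref{cond A}. The Fourier representation \eqref{eq H product} together with \eqref{e.mu} converts the spatial fractional Sobolev norms of heat-kernel terms into explicit integrals in $\xi$.

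For part (a), I would start from
\[
\mathcal J_1-\widetilde{\mathcal J}_1=\int_t^{t+\varepsilon}\!\!\int_{\mathbb R} p_{t+\varepsilon-s}(x-y)\bigl[\sigma(u(s,y))-\sigma(u(t,x))\bigr]W(ds,dy),
\]
apply BDG, and decompose the spatial increment of the integrand via $(fg)(y+h)-(fg)(y)=(f(y+h)-f(y))g(y)+f(y+h)(g(y+h)-g(y))$. The first piece contributes the fractional Sobolev norm of $p_{t+\varepsilon-s}$ multiplied by $\|\sigma(u(s,y))-\sigma(u(t,x))\|_{L^p}\ls(|t-s|+|y-x|^2)^{\vartheta_0}$ from \eqref{eq Holder}; the second piece contributes $p_{t+\varepsilon-s}(x-y-h)$ multiplied by $\|\sigma(u(s,y+h))-\sigma(u(s,y))\|_{L^p}\ls|h|^{2\vartheta_0}$. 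A Gaussian change of variables in $y$ and the $s$-integration over $[t,t+\varepsilon]$ then deliver the exponent $H+\vartheta$ provided $\vartheta\le\vartheta_0$ and $\vartheta<\tfrac12-H$.

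For part (b), after BDG and the uniform bound $\|\sigma(u(s,y))\|_{L^p}\ls 1$ coming from \eqref{eq bound}, the dominant contribution reduces to
\[
\int_0^{t-\varepsilon^\theta}\!\!\int_{\mathbb R}|\xi|^{1-2H}\bigl|e^{-\varepsilon\xi^2}-1\bigr|^2 e^{-2(t-s)\xi^2}d\xi\,ds.
\]
I would exploit $|e^{-\varepsilon\xi^2}-1|\le\min(1,\varepsilon\xi^2)$, perform the $s$-integration first to obtain a factor of the form $\xi^{-2}\bigl(e^{-2\varepsilon^\theta\xi^2}-e^{-2t\xi^2}\bigr)$, and then split the $\xi$-integral at $\xi\sim\varepsilon^{-1/2}$ to extract the exponent $[H+(2-H)(1-\theta)]\wedge\tfrac12$. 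Part (c) is handled by the same decomposition as part (a), with the base point $(t,x)$ replaced by $(t(\varepsilon),x)$ and the single kernel $p_{t+\varepsilon-s}$ replaced by the difference $p_{t+\varepsilon-s}-p_{t-s}$; the H\"older estimate now yields $(\varepsilon^\theta+|y-x|^2)^{\vartheta_0}$ on $s\in[t(\varepsilon),t]$, and tracking the kernel-difference scaling under the Gaussian change of variables produces the factor $\varepsilon^{(H+\vartheta)\theta}$.

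Part (d) is the quickest: writing
\[
\mathcal J_{3,\theta}'-\widetilde{\mathcal J}_{3,\theta}=\bigl[\sigma(u(t(\varepsilon),x))-\sigma(u(t,x))\bigr]\!\int_{t(\varepsilon)}^t\!\!\int_{\mathbb R}\bigl[p_{t+\varepsilon-s}(x-y)-p_{t-s}(x-y)\bigr]W(ds,dy),
\]
I apply Cauchy--Schwarz in $L^p(\Omega)$ with conjugate exponents $2p$ and $2p$. The first factor is controlled by $\varepsilon^{\theta\vartheta_0}$ via the Lipschitz property of $\sigma$ and \eqref{eq Holder}; the second factor is bounded via Proposition \ref{HBDG} and Plancherel by $\varepsilon^{\theta H/2}$, since its integrand is deterministic and the resulting variance is $\int_0^{\varepsilon^\theta}\!\int_{\mathbb R}|\xi|^{1-2H}e^{-2r\xi^2}d\xi\,dr\es\varepsilon^{\theta H}$. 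The main obstacle will lie in parts (a) and (c): the product decomposition of $\cN_{\frac12-H,p}$ acting on $p_{t+\varepsilon-s}(x-\cdot)\sigma(u(s,\cdot))$ must be balanced so that the Gaussian integration in $y$ and the heat-kernel scaling combine to give at least a time exponent of $H+\vartheta$. This is delicate because $H<\tfrac12$, so the heat-kernel increments are only borderline integrable against $|h|^{2H-2}$. This is precisely why the admissible range of $\vartheta$ is restricted to $(0,\vartheta_0]\cap(0,\tfrac12-H)$, and why the final exponent $\tfrac H2+\delta$ in Theorem \ref{thm main} is obtained only for $\delta$ in the interval $\mathcal I$ defined in \eqref{eq interval}.
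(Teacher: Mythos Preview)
Your plan is correct and matches the paper's proof closely: BDG (Proposition \ref{HBDG}), product decomposition of the increment, the H\"older estimate \eqref{eq Holder} for the kernel-difference piece, and Cauchy--Schwarz for part (d). One clarification worth making explicit: for the ``second piece'' in parts (a), (b), (c)---where the increment falls on $\sigma(u(s,\cdot))$---the paper does not use the pointwise bound $\|\sigma(u(s,y+h))-\sigma(u(s,y))\|_{L^p}\lesssim|h|^{2\vartheta_0}$ (which would not be integrable in $h$ at infinity against $|h|^{2H-2}$), but rather recognizes the resulting $h$-integral as $[\cN_{\frac12-H,p}\sigma(u(s,\cdot))]^2$ and invokes the uniform bound in \eqref{eq bound}; this gives these cross terms the exponent $\varepsilon^{1/2}$, which is harmless since $H+\vartheta<\tfrac12$.
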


 The proof  of  Lemma \ref{lem approximation}  is deferred to  Section \ref{sec lemma}.   Let us first use it to prove Theorem \ref{thm main}.

  \subsection{Proof of Theorem \ref{thm main}}\label{sec proof thm main}
\begin{proof}[Proof of   Theorem \ref{thm main}] The proof is inspired by  \cite[Proposition 4.6]{KSXZ2013}  and \cite[Theorem 1.2]{WX2024}. For any
$\vartheta\in (0,  \vartheta_0]\cap \left(0,  \frac12-H\right)$,  let
          \begin{align}\label{eq const a}
       \theta:=\frac{2}{2+\vartheta}.
          \end{align}
          For this particular choice of $\theta$, we have
          \begin{align}\label{eq const a2}
          H+(2-H)(1-\theta)=  (H+\vartheta )\theta=:\mathcal G_{\vartheta}.
          \end{align}
          Notice that $\mathcal G_{\vartheta }=H+  \frac{ 2-H }{2+\vartheta} \vartheta \in \left(H, \frac12\right)$.

            By the Minkowski and Jensen inequalities,   parts (c) and (d) of   Lemma \ref{lem approximation}   imply that for any $p\ge 1, \eta\in \left(H,  \mathcal G_{\vartheta}\right]$,     we have
                         \begin{align}\label{J1 diff 1}
                  \left\|  {\mathcal J_3-\widetilde{\mathcal J}_{3, \theta}}\right\|_{L^p(\Omega)}\le c_{3,5} \e^{\eta},
                  \end{align}
             for some  positive finite constant $c_{3,5}$, which does not depend on $\e\in(0,1)$,    $t\in [0,T]$,  or $x\in \mathbb R$.

For any  interval $Q\subset [0,T]$,  let
$$\Lambda(Q):= \int_{Q\times \mathbb R}\left[p_{t+\e-s}(x-y)-p_{t-s}(x-y)\right] W(ds,dy).$$
 By     \eqref{eq appr 3}, we have
   $$ {\widetilde{\mathcal J}_{3, \theta}}  =\sigma(u(t, x)) \Lambda\left(\left[t-\e^{\theta},t\right]\right).$$
     Applying   \eqref{eq bound} and  the Cauchy-Schwarz inequality,
      we obtain  the following estimate
      \begin{align}\label{J1 diff 2}
  \left\|  {\widetilde{\mathcal J}_{3, \theta}}- \sigma(u(t, x))  \Lambda([0, t])\right\|_{L^p(\Omega)} \le c_{3,6}
  \left\| \Lambda\left(\left[0,t-\e^{\theta}\right]\right)\right\|_{L^{2p}(\Omega)},
      \end{align}
      where $c_{3,6}>0$ is a constant.   Since $\Lambda\left(\left[0,t-\e^{\theta}\right]\right)$ is the same as the quantity $\mathcal J_{3, \theta}$ when
      $\sigma\equiv1$,  we can apply  part (b) of Lemma \ref{lem approximation}   to the linear equation \eqref{eq SHE linear}. This gives  the estimate for any
      $\eta\in (H,  \mathcal G_{\vartheta}]$,
          \begin{align}\label{J1 diff 3}
\left\| \Lambda\left(\left[0,t-\e^{\theta}\right]\right)\right\|_{L^{2p}(\Omega)} \le c_{3,2}  \e^{\eta}.
      \end{align}
   Combining  \eqref{J1 diff 1}-\eqref{J1 diff 3}, we obtain  that  every $p\ge 1$ and $\eta \in (H,  \mathcal G_{\vartheta}]$, there exists a  finite  positive constant
   $c_{3,7}$, which does not depend on   $t\in [0,T]$,  $\varepsilon \in (0,1)$, or $x\in \mathbb R $, such that
      \begin{align*}
    \left\| \mathcal J_{3, \theta}-\sigma(u(t, x)) \int_{0}^t\int_{\mathbb R}\left[p_{t+\e-s}(x-y)-p_{t-s}(x-y)\right] W(ds,dy) \right\|_{L^p(\Omega)}
    \le c_{3,7} \e^{\eta}.
                  \end{align*}
    This, together with \eqref{eq J2 diff1} and \eqref{eq J1 1}, implies that \eqref{eq main} holds for every $\eta\in (H,  \mathcal G_{\vartheta}]$.
 The proof is complete.
   \end{proof}

 \subsection{Remarks about the initial value}\label{sec rem initial}
                If $\beta_0 > H$ in Condition \ref{cond A},    then for any $\delta\in \mathcal I$ defined by \eqref{eq interval} with $\vartheta_0=H/2$, there exists  a constant    $c_{3, 8}>0$
       such that for all $\varepsilon\in(0,1)$,
         \begin{align}\label{eq main 2}
      \sup_{t\in[0,T]}\sup_{x\in \mathbb R}   \Big\| (\mathcal D_{\varepsilon} u)(t,x)           -\sigma(u(t, x))(\mathcal D_{\varepsilon} v)(t,x)   \Big\|_{L^{p}(\Omega)}
        \le   c_{3,8}\,   \varepsilon^{\frac{H}{2}+\delta}.
         \end{align}
        Indeed, since the initial condition $u_0$ is H\"older continuous with order $\beta_0$ (see Remark \ref{rem u0}),
      by \cite[Proposition 2.6]{KS2023},   there exists a constant $c_{3,9}>0$ such that
      \begin{align}\label{eq J00}
        \left|\big[ p_{t+\e}(\cdot)-p_t(\cdot)\big]*u_0(x)  \right| \le c_{3, 9}\e^{\frac{\beta_0}{2}}.
        \end{align}
     Here, the   constant  $c_{3,9}$  is independent  of  $\e\in (0,1)$, $t\in [0,T]$,   and $x\in \mathbb R$.  Combining \eqref{eq main} and \eqref{eq J00} yields \eqref{eq main 2}.

    Under Condition \ref{cond A},
     for any fixed $t>0$, and   $x, y\in \mathbb R$, we have
     \begin{equation*}
    \left|p_t*u_0(x)-p_t*u_0(y)\right|
    \le \,  \int_{\mathbb R}      \left| p_t(x-z)-p_t(y-z) \right| dz \cdot \|u_0\|_{L^{\infty}(\mathbb R)}.
      \end{equation*}
      By \cite[Lemma 2.1]{HRZ2021}, we obtain
     \begin{equation*}
     \begin{split}
    \left|p_t*u_0(x)-p_t*u_0(y)\right|
    \le  \,      c_{3,10}   \frac{|x-y|^{\frac{1}{2}}}{t^{\frac{1}{2}}}    \cdot \|u_0\|_{L^{\infty}(\mathbb R)},
    \end{split}
      \end{equation*}
   where  $c_{3,10} >0$ is a constant.   Hence,
      by \cite[Proposition 2.6]{KS2023},   there exists a constant $c_{3,11}>0$   such that
        \begin{align}\label{eq J01}
        \left|\big[ p_{t+\e}(\cdot)-p_t(\cdot)\big]*u_0(x)  \right| \le c_{3,11}\e^{\frac{1}{4}},
        \end{align}
    for all $t\ge T>0, \e\in(0,1)$, and $x  \in \RR$. Putting    \eqref{eq main} and  \eqref{eq J01} together,  we obtain  that for all   $\e>0$ and $T_2>T_1>0$,
        \begin{equation*}
        \begin{split}
      \sup_{t\in [T_1, T_2]}\sup_{x\in \mathbb R}  \Big\| \big[u(t+\e, x)-u(t,x)\big]-\sigma(u(t, x))\big[v(t+\e, x)-v(t,x)\big]  \Big\|_{L^p(\Omega)} 
      \le  \,  c_{3,12}\varepsilon^{\frac{H}{2}+\delta},
      \end{split}
         \end{equation*}
     for some constant   $c_{3,12}>0$.

    Using Parts (a)-(c) of Lemma \ref{lem approximation}  as  in the proof of Theorem \ref{thm main} and using \eqref{eq J01},   we 
    derive the following result.
    \begin{corollary}\label{corollary a-c} Assume that Condition \ref{cond A} holds with $\beta_0 > H$.  For any
$\vartheta\in (0,  \vartheta_0]\cap \left(0,  \frac12-H\right)$,  let $\theta$ be the constant given by
        \eqref{eq const a}. Then,  there exists a constant   $c_{3,13}\in (0, \infty)$
   such that  for  all  $\varepsilon\in(0,1)$,  $t\in [0,T]$, and $x\in \mathbb R$,
          \begin{equation}\label{eq main 21}
          \begin{split}
          \left\| (\mathcal D_{\varepsilon} u)(t,x)   -\sigma\left(u\left(t-\varepsilon^{\theta}, x\right)\right)\left(\mathcal D_{\varepsilon} v\right)(t,x)   \right\|_{L^{p}(\Omega)} 
         \le   c_{3, 13}\,   \varepsilon^{\frac{H}{2}+ \frac{ (2-H) \vartheta }{2(2+\vartheta)} }.
         \end{split}
          \end{equation} 
                             \end{corollary}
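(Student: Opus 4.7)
The plan is to revisit the decomposition \eqref{eq Diff X} of $(\mathcal D_\varepsilon u)(t,x)$ but refuse to apply part (d) of Lemma \ref{lem approximation}, thereby keeping the coefficient $\sigma(u(t(\varepsilon),x))$ rather than collapsing it to $\sigma(u(t,x))$. Since the linear equation \eqref{eq SHE linear} admits the parallel splitting
\[
(\mathcal D_\varepsilon v)(t,x) = A + \Lambda([0,t(\varepsilon)]) + \Lambda([t(\varepsilon),t]),
\]
with $A := \int_t^{t+\varepsilon}\int_{\mathbb R} p_{t+\varepsilon-s}(x-y)\,W(ds,dy)$ and $\Lambda(\cdot)$ as in the proof of Theorem \ref{thm main}, and since by \eqref{eq appr 2} one has $\sigma(u(t(\varepsilon),x))\,\Lambda([t(\varepsilon),t]) = \mathcal J_{3,\theta}'$, subtracting yields the five-term identity
\begin{align*}
(\mathcal D_\varepsilon u)(t,x) - \sigma(u(t(\varepsilon),x))(\mathcal D_\varepsilon v)(t,x) &= [p_{t+\varepsilon}-p_t]*u_0(x) + \mathcal J_{2,\theta} \\
&\quad + \bigl(\mathcal J_1 - \sigma(u(t(\varepsilon),x))\,A\bigr) + \bigl(\mathcal J_{3,\theta}-\mathcal J_{3,\theta}'\bigr) \\
&\quad - \sigma(u(t(\varepsilon),x))\,\Lambda([0,t(\varepsilon)]).
\end{align*}

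Four of the five pieces are essentially routine. The initial-data term is $O(\varepsilon^{\beta_0/2})$ by \eqref{eq J00}, which under the hypothesis $\beta_0 > H$ is absorbed into the target rate $\varepsilon^{\mathcal G_\vartheta/2}$, where $\mathcal G_\vartheta$ is defined in \eqref{eq const a2}. The term $\mathcal J_{2,\theta}$ is bounded by Lemma \ref{lem approximation}(b), and $\mathcal J_{3,\theta}-\mathcal J_{3,\theta}'$ by Lemma \ref{lem approximation}(c); the choice $\theta = 2/(2+\vartheta)$ from \eqref{eq const a} makes both exponents collapse exactly to $\mathcal G_\vartheta$ via identity \eqref{eq const a2}, while the restriction $\vartheta \le 1/2 - H$ prevents the cut-off $\wedge\,1/2$ in Lemma \ref{lem approximation}(b) from being active. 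For the leftover $\sigma(u(t(\varepsilon),x))\,\Lambda([0,t(\varepsilon)])$, one applies Cauchy--Schwarz in $L^p(\Omega)$: the first factor is bounded in $L^{2p}(\Omega)$ by the Lipschitz property $|\sigma(u)| \le L|u|$ together with \eqref{eq bound}, whereas the second factor is exactly $\mathcal J_{2,\theta}$ for the linear equation ($\sigma \equiv 1$), so a second application of Lemma \ref{lem approximation}(b) delivers the bound $c\,\varepsilon^{\mathcal G_\vartheta/2}$.

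The delicate piece is $\mathcal J_1 - \sigma(u(t(\varepsilon),x))\,A$, which I would split as
\[
\bigl(\mathcal J_1 - \widetilde{\mathcal J}_1\bigr) + \bigl(\sigma(u(t,x)) - \sigma(u(t(\varepsilon),x))\bigr)\,A.
\]
Lemma \ref{lem approximation}(a) handles the first summand with rate $\varepsilon^{(H+\vartheta)/2}$, which dominates $\varepsilon^{\mathcal G_\vartheta/2}$ since $\theta < 1$. For the second, Cauchy--Schwarz in $L^p(\Omega)$, the Lipschitz property of $\sigma$ and the H\"older estimate \eqref{eq Holder} give a factor $c\,\varepsilon^{\theta \vartheta_0}$, while Proposition \ref{HBDG} applied to the linear equation yields $\|A\|_{L^{2p}(\Omega)}^2 \lesssim \varepsilon^H$, giving overall $c\,\varepsilon^{\theta\vartheta_0 + H/2}$. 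Since $\beta_0 > H$ forces $\vartheta_0 = H/2$, this equals $\varepsilon^{H/2 + H/(2+\vartheta)}$, and the elementary inequality $2H \ge (2-H)\vartheta$, which holds throughout the admissible range $\vartheta \in (0,(H/2)\wedge(1/2-H)]$, confirms $H/(2+\vartheta) \ge (2-H)\vartheta/(2(2+\vartheta)) = \mathcal G_\vartheta/2 - H/2$. Assembling the five estimates yields \eqref{eq main 21}. The main obstacle is precisely this new time-variation error $(\sigma(u(t,x))-\sigma(u(t(\varepsilon),x)))\,A$: it is the price for keeping the coefficient frozen at time $t(\varepsilon)$, and matching its rate to the target is what makes the regime $\vartheta_0 = H/2$ (equivalently, $\beta_0 > H$) the natural one.
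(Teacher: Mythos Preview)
Your approach is essentially the same as the paper's: the paper's proof is a one-line sketch saying to use parts (a)--(c) of Lemma~\ref{lem approximation} as in the proof of Theorem~\ref{thm main} together with \eqref{eq J00}, and your five-term decomposition is precisely the natural unpacking of that sketch. The only place you add detail beyond what the paper writes is the handling of $\mathcal J_1 - \sigma(u(t(\varepsilon),x))A$, which you correctly split as $(\mathcal J_1 - \widetilde{\mathcal J}_1) + (\sigma(u(t,x))-\sigma(u(t(\varepsilon),x)))A$ and bound via part~(a), \eqref{eq Holder}, and a direct moment estimate on $A$; this extra step is implicit in the paper's reference to ``as in the proof of Theorem~\ref{thm main}''.
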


    \section{Proof of Lemma \ref{lem approximation}}\label{sec lemma}
         \subsection{Proof of (a) in Lemma \ref{lem approximation}}
    \begin{proof}[Proof of (a) in Lemma \ref{lem approximation}]
     Notice that  $\mathcal J_1-\widetilde{\mathcal J}_1$ has the form
    $$
     \int_t^{t+\varepsilon} \int_{\mathbb R} g_{t, x, \varepsilon}(s,y) W(ds,dy),
    $$
    with
    $$
       g_{t, x, \varepsilon}(s,y):=  p_{t+\varepsilon-s}( x-y) \left[\sigma\big(u(s, y)\big)- \sigma\big(u(t, x)\big)\right].
    $$
  Since  $g_{t, x, \varepsilon}$ is predictable, by   BDG's inequality  \eqref{eq BDG},  we have
     \begin{equation*}
    \begin{split}
     \left\|\mathcal J_1-\widetilde{\mathcal J}_1 \right\|_{L^{p}(\Omega)}^2
    \le  \,  4p c_{2,2}^2 \int_t^{t+\varepsilon}ds \int_{\mathbb R}dy  \int_{\mathbb R}dz   \left\|   g_{t, x, \varepsilon}(s,y) -  g_{t, x, \varepsilon}(s,z)  \right\|^2_{L^{p}(\Omega)}  |y-z|^{2H-2}.
     \end{split}
    \end{equation*}
    Decomposing the increments  $g_{t, x, \varepsilon}(s,y) -  g_{t, x, \varepsilon}(s,z)$  yields
    \begin{equation} \label{eq A 1}
      \left\|\mathcal J_1-\widetilde{\mathcal J}_1 \right\|_{L^{p}(\Omega)}^2 \le   8p c_{2,2}^2 \left[  \int_t^{t+\varepsilon}  J_{1}(s)ds + \int_t^{t+\varepsilon}  J_{2}(s)ds\right],
    \end{equation}
     where
    \begin{equation*} \label{eq A 11}
    \begin{split}
     J_{1}(s):=&\, \int_{\mathbb R}dy  \int_{\mathbb R}dz   \big|p_{t+\varepsilon-s}(  x-y)- p_{t+\varepsilon-s}(  x-z)  \Big|^2\\
    &  \ \ \  \cdot  |y-z|^{2H-2} \left\|\sigma\big(u(s, y)\big)- \sigma\big(u(t, x)\big)\right\|_{L^{p}(\Omega)}^2,
     \end{split}
    \end{equation*}
    and
     \begin{equation*} \label{eq A 12}
    \begin{split}
     J_{2}(s):= \, \int_{\mathbb R}dy  \int_{\mathbb R}dz p_{t+\varepsilon-s}(x-z)^2
       |y-z|^{2H-2} \left\|\sigma\big(u(s, y)\big)- \sigma\big(u(s, z)\big)\right\|_{L^{p}(\Omega)} ^2.
     \end{split}
    \end{equation*}

    By   \eqref{eq bound} and \eqref{eq Holder}, we have that for any $\vartheta\le \vartheta_0$,
     \begin{equation} \label{eq A0 112}
    \begin{split}
     J_{1}(s) \le &\, c_{4,1}
       \int_{\mathbb R}dy  \int_{\mathbb R}dz   \big|p_{t+\varepsilon-s}(  x-y)- p_{t+\varepsilon-s}( x-z)  \Big|^2\\
    &  \ \ \ \  \cdot  |y-z|^{2H-2} \left[\big(s-t\big)^{\vartheta}+ |y-x|^{2\vartheta}\right]\\
    \le &\,
    c_{4,1}  \big(s-t\big)^{\vartheta}
      \int_{\mathbb R}dy  \int_{\mathbb R}dz   \big|p_{t+\varepsilon-s}( x-y)- p_{t+\varepsilon-s}(  x-z)  \Big|^2     |y-z|^{2H-2}   \\
    &\,+ c_{4,1}   \int_{\mathbb R}dy  \int_{\mathbb R}dz   \big|p_{t+\varepsilon-s}(  x-y)- p_{t+\varepsilon-s}(  x-z)  \Big|^2  |y-z|^{2H-2}  |y-x|^{2\vartheta}\\
    =:&\, c_{4,1} \left[J_{1,1}(s)+ J_{1,2}(s)\right].
     \end{split}
     \end{equation}
    Applying  Lemma \ref{lem int p} with $\beta=\frac12-H$, we obtain the estimate
    $$
    \int_{\mathbb R}dy  \int_{\mathbb R}dz   \big|p_{t+\varepsilon-s}(  x-y)- p_{t+\varepsilon-s}(  x-z)  \Big|^2     |y-z|^{2H-2}= c_{4,2} (t+\varepsilon-s)^{H-1}.
    $$
     This allows us to write the integral over time as
     \begin{equation} \label{eq A 11-10}
    \begin{split}
      \int_t^{t+\varepsilon}   J_{1, 1}(s)ds =\,     c_{4,2}   \int_t^{t+\varepsilon} (s-t)^{\vartheta} (t+\varepsilon-s)^{H-1} ds
    = \,  c_{4,2} \mathbf B (1+\vartheta, H)  \varepsilon^{H+ \vartheta},
      \end{split}
     \end{equation}
    where $\mathbf B(1+\vartheta, H)$ is the beta function.

    By a change of variables and the scaling property  of the heat kernel:
        \begin{align}\label{eq scaling}
    p_t(x)=t^{-\frac12}p_1(t^{-\frac12} x),  \ \ \ \text{for } t>0, x\in \mathbb R,
    \end{align}
          we can write  $J_{1,2}(s)$  as follows: for any  $\vartheta<\frac12-H$,
     \begin{equation*}
    \begin{split}
    J_{1,2}(s) =
    &  \int_{\mathbb R}dy  \int_{\mathbb R}dz   \big|p_{t+\varepsilon-s}(  x-y)- p_{t+\varepsilon-s}(  x-z)  \big|^2  |y-z|^{2H-2}  |y-x|^{2\vartheta}\\
    =&\,  (t+\varepsilon-s)^{H+\vartheta-1}\int_{\mathbb R}dy  \int_{\mathbb R}dz   \big|p_1( y)- p_1(  z)  \Big|^2  |y-z|^{2H-2}  |y|^{2\vartheta}.
    \end{split}
     \end{equation*}
  The last double integral  in the above equation is finite due to  Lemma \ref{lem green 2}.  Therefore,
      \begin{equation} \label{eq A  11-2}
    \int_t^{t+\varepsilon}   J_{1,2}(s)ds \le \,   c_{4,3} \int_t^{t+\varepsilon} (t+\varepsilon-s)^{H+\vartheta-1}   ds  = \frac{c_{4,3}}{H+\vartheta} \varepsilon^{H+\vartheta},
     \end{equation}
     where $c_{4,3}$ is a positive constant.

    Recall  the notation $\mathcal  N_{\frac{1}{2}-H,  p} $ defined by  \eqref{NpNorm}.   There exists a constant $c_{4,4}>0$ such that
    \begin{equation*}
    \begin{split}
     J_{2}(s)  =&\,   \int_{\mathbb R}  p_{t+\varepsilon-s}(  x-z)^2 \left[\mathcal N_{\frac{1}{2}-H,  p} \sigma(u(s,z)) \right]^2 dz\\
     \le &\, \sup_{s\in [0,T], x\in \mathbb R} \left[\mathcal N_{\frac{1}{2}-H,  p} \sigma(u(s,x))\right]^2   \int_{\mathbb R}  p_{t+\varepsilon-s}(x-z)^2 dz \\
    \le &\,    c_{4,4} (t+\varepsilon-s)^{-\frac12},
    \end{split}  \end{equation*}
 where  we have used the fact
  $$
     \sup_{s\in [0,T], x\in \mathbb R} \left[\mathcal N_{\frac{1}{2}-H, p} \sigma(u(s,x))\right]^2\le\, L_{\sigma}^2\sup_{s\in [0,T], x\in \mathbb R} \left[\mathcal N_{\frac{1}{2}-H, p} u(s,x)\right]^2<\infty,
     $$
    with $L_{\sigma}$ being the Lipschitz   constant of $\sigma$.
    Thus,
     \begin{equation} \label{eq A 12-2}
    \begin{split}
    \int_t^{t+\varepsilon}  J_{2}(s)ds  \le  2c_{4,4} \varepsilon^{\frac12}.
    \end{split}  \end{equation}

Combining  \eqref{eq A 1},  \eqref{eq A0 112},  \eqref{eq A 11-10}, \eqref{eq A 11-2}, and \eqref{eq A 12-2}, we obtain  \eqref{eq J2 diff1}, which completes           the proof of part (a) in Lemma  \ref{lem approximation}.
    \end{proof}

    \subsection{Proof of (b) in Lemma  \ref{lem approximation}}
    For any $\varepsilon>0$,  denote
        \begin{align}\label{eq differ2}
              p_{[t,\,  t+\varepsilon]}(x):=p_{t+\varepsilon}(x)-p_t(x), \ \ \ \ \ \ \ \ \ (t\ge 0, x\in \mathbb R).
             \end{align}

     \begin{proof}[Proof of (b) in Lemma  \ref{lem approximation}]
    The   term $\mathcal J_{2, \theta} $ has  the form
    $$
     \int_0^{t(\varepsilon)}  \int_{\mathbb R} f_{t, x, \varepsilon}(s,y) W(ds,dy),
    $$
    with
    $$
    f_{t, x, \varepsilon}(s,y) :=  p_{[t-s, \, t+\varepsilon-s]}(x-y)\sigma\big(u(s, y)\big).
    $$
     Since  $f_{t, x, \varepsilon}$ is predictable, applying  the BDG inequality  \eqref{eq BDG},  we have
     \begin{equation*} \label{eq A 3}
    \begin{split}
    \left\|\mathcal J_{2, \theta} \right\|_{L^{p}(\Omega)}^2
    \le  \,  4pc_{2,2}^2 \int_0^{t(\varepsilon)} ds \int_{\mathbb R}dy  \int_{\mathbb R}dz   \left\|  f_{t, x, \varepsilon}(s,y) - f_{t, x, \varepsilon}(s,z)  \right\|^2_{L^{p}(\Omega)}  |y-z|^{2H-2}.
     \end{split}
    \end{equation*}
    Decomposing    the increments  $f_{t, x, \varepsilon}(s,y) -  f_{t, x, \varepsilon}(s,z)$   yields
    \begin{equation}\label{eq A decomp}
    \left\|\mathcal J_{2, \theta} \right\|_{L^{p}(\Omega)}^2 \le 4p c_{2,2}^2  \left[  \int_0^{t(\varepsilon)}  J_{3}(s)ds + \int_0^{t(\varepsilon)}  J_{4}(s)ds\right],
     \end{equation}
     where
    \begin{equation*} \label{eq A 31}
    \begin{split}
     J_{3}(s):= & \, \int_{\mathbb R}dy  \int_{\mathbb R}dz  \Big| p_{[t-s,\, t+\varepsilon-s]}(x-y)- p_{[t-s,\, t+\varepsilon-s]}( x-z)  \Big|^2
     |y-z|^{2H-2} \left\|\sigma\big(u(s, y)\big) \right\|_{L^{p}(\Omega)} ^2,\\
   J_{4}(s):=&  \, \int_{\mathbb R}dy  \int_{\mathbb R}dz\,  p_{[t-s,\, t+\varepsilon-s]}( x-z)^2
       |y-z|^{2H-2} \left\|\sigma\big(u(s, y)\big)- \sigma\big(u(s, z)\big)\right\|_{L^{p}(\Omega)} ^2.
     \end{split}
    \end{equation*}

    By \eqref{eq bound}, a change of variables,  and   Plancherel's identity,   we have
    \begin{equation} \label{eq A 3101}
    \begin{split}
     &\int_0^{t(\varepsilon)}  J_{3}(s) ds\\
      \le &\, c_{4,5}\int_0^{t(\varepsilon)}ds    \int_{\mathbb R}dy  \int_{\mathbb R}dz  \Big| p_{[t-s,\, t+\varepsilon-s]}(x-y)- p_{[t-s,\, t+\varepsilon-s]}( x-z)  \Big|^2
      |y-z|^{2H-2}\\
     =  &\, c_{4,5}\int_0^{t(\varepsilon)}ds    \int_{\mathbb R}dy  \int_{\mathbb R}dh  \Big| p_{[t-s,\, t+\varepsilon-s]}( y)- p_{[t-s,\, t+\varepsilon-s]}(y+h)  \Big|^2
      |h|^{2H-2} \\
      =&\,  c_{4,5}\int_0^{t(\varepsilon)}ds   \int_{\mathbb R}d\xi   \int_{\mathbb R}dh \, e^{-2(t-s)  \xi^2} \left|e^{-\varepsilon\xi^2} -1\right|^2 \left| e^{-i\xi h}-1\right|^2  |h|^{2H-2}\\
    = &\,  c_{4,5}\int_0^{t(\varepsilon)}ds   \int_{\mathbb R}d\xi    \, e^{-2(t-s)  \xi^2} \left|e^{- \varepsilon\xi^2} -1\right|^2   |\xi|^{1-2H} \cdot \int_{\mathbb R}dh  \left| e^{-i  h}-1\right|^2
     |h|^{2H-2},
     \end{split}
     \end{equation}
     where $c_{4, 5}\in (0,\infty)$ is a constant.
         By the elementary inequality $0\le 1-e^{-x}\le   x$ for all $x\ge0$, we have
   \begin{equation} \label{eq A 3102}
    \begin{split}
  &  \int_0^{t(\varepsilon)}ds   \int_{\mathbb R}d\xi    \, e^{-2(t-s)  \xi^2} \left|e^{- \varepsilon\xi^2} -1\right|^2   |\xi|^{1-2H}\\
     \le&\,  \varepsilon^2\int_0^{t(\varepsilon)}ds \int_{\mathbb R}d\xi     \, e^{-2(t-s)  \xi^2}|\xi|^{5-2H}\\
    = &\, 2^{H-3} \varepsilon^2\int_0^{t(\varepsilon)} (t-s) ^{H-3}ds \cdot \int_{\mathbb{R}}e^{-\xi^2}\xi^{5-2H}d\xi\\
     \le&\, 2^{H-3}  \varepsilon^{H+(2-H)(1-\theta)}\int_{\mathbb{R}}e^{-\xi^2}\xi^{5-2H}d\xi.
   \end{split}
     \end{equation}
    Since the integrals $\int_{\mathbb{R}}e^{-\xi^2}\xi^{5-2H}d\xi$ and  $\int_{\mathbb R}  \left| e^{-i  h}-1\right|^2  |h|^{2H-2} dh$ are  convergent,     \eqref{eq A 3101} and \eqref{eq A 3102} imply that
    \begin{equation}\label{eq A 310}
     \int_0^{t(\varepsilon)}  J_{3}(s) ds \le c_{4,6}\varepsilon^{H+(2-H)(1-\theta)},
    \end{equation}
   for some positive constant $c_{4,6}\in (0,\infty)$.


    Recalling the notation $\mathcal N_{\frac{1}{2}-H, p}$ defined  in  \eqref{NpNorm}, and using  Plancherel's identity along with  \eqref{eq bound}, we  have
    \begin{equation} \label{eq A 320}
    \begin{split}
     \int_0^{t(\varepsilon)}  J_{4}(s) ds =&\, \int_0^{t(\varepsilon)}ds  \int_{\mathbb R} dz  p_{[t-s,\, t+\varepsilon-s]}(x-z)^2  \left[\mathcal N_{\frac{1}{2}-H, p} \sigma(u(s,z) )\right]^2 \\
     \le &\, \sup_{s\in [0,T],\,  x\in \mathbb R}  \left[\mathcal N_{\frac{1}{2}-H, p} \sigma(u(s,x)) \right]^2   \cdot  \int_0^{t(\varepsilon)}ds   \int_{\mathbb R}dz\,  p_{[t-s,\, t+\varepsilon-s]}(x-z)^2    \\
    \le &\,    c_{4,7}   \int_0^{t(\varepsilon)} ds   \int_{\mathbb R}  d\xi  \,   e^{-2(t-s)  \xi^2} \left|e^{-  \varepsilon \xi^2} -1\right|^2  \\
    = &\,  \frac{c_{4,7} }{2} \int_{\mathbb R}  \left[e^{-2\e^{\theta}\xi^2}-e^{-2t\xi^2} \right]   \left|e^{- \varepsilon   \xi^2}-1  \right|^2 |\xi|^{-2}d\xi\\
    \le  &\,   \frac{c_{4,7} }{2}     \int_{\mathbb R}    \left|e^{-\varepsilon  \xi^2}-1  \right|^2 |\xi|^{-2}d\xi\\
   =&\,  \frac{c_{4,7} }{2}         \int_{\mathbb R}    \left|e^{-   \xi^2}-1  \right|^2 |\xi|^{-2}d\xi\cdot \varepsilon^{\frac12}.
    \end{split}
     \end{equation}
     Here, $c_{4,7}$ is a positive constant.

    Putting   \eqref{eq A decomp}, \eqref{eq A 310}, and \eqref{eq A 320} together, we obtain     \eqref{eq J1 1}.      The proof of part   (b) in Lemma  \ref{lem approximation} is complete.
    \end{proof}

    \subsection{Proof of   (c) in Lemma  \ref{lem approximation}}
    \begin{proof}[Proof of   (c) in Lemma  \ref{lem approximation}]
     Since $u\big(t(\e), x\big)$ is $\mathcal F_{t(\e)}$-measurable,
         $\sigma\big(u(t(\e), x)\big)$ can be put  inside the stochastic integral $\mathcal J_{3, \theta}'$, that is
         $$
        \mathcal J_{3, \theta}' =\,  \int_{t(\e)}^t\int_{\mathbb R}p_{[t-s, t+\e-s]}(x-y)  \sigma\left(u(t({\e}), x)\right) W(ds,dy).
    $$
     Consequently,  the   term $\mathcal J_{3, \theta}-\mathcal J_{3, \theta}'$ takes the form
    $$
     \int_{t(\varepsilon)}^{t } \int_{\mathbb R} k_{t, x, \varepsilon}(s,y) W(ds,dy)
    $$
    with
    $$
      k_{t, x, \varepsilon}(s,y) :=  p_{[t-s,\, t+\varepsilon-s]}(x-y) \left[\sigma\left(u(s, y)\big)- \sigma\big(u(t(\varepsilon), x)\right)\right].
    $$
      Since  $k_{t, x, \varepsilon}$ is predictable, applying  the BDG inequality  \eqref{eq BDG},     we obtain
     \begin{equation*} \label{eq A 2}
    \begin{split}
     & \left\|\mathcal J_{3, \theta}-\mathcal J_{3, \theta}' \right\|_{L^{p}(\Omega)}^2\\
    \le & \,  4pc_{2,2}^2 \int_{t(\varepsilon)}^t ds \int_{\mathbb R}dy  \int_{\mathbb R}dz   \left\|  k_{t, x, \varepsilon}(s,y) - k_{t, x, \varepsilon}(s,z)  \right\|^2_{L^{p}(\Omega)}
     |y-z|^{2H-2}.
     \end{split}
    \end{equation*}
     A simple decomposition of the increments  $k_{t, x, \varepsilon}(s,y) - k_{t, x, \varepsilon}(s,z)$   yields 
     \begin{equation}\label{eq A decomp1}
     \left\|\mathcal J_{3, \theta}-\mathcal J_{3, \theta}' \right\|_{L^{p}(\Omega)}^2\le  8pc_{2,2}^2\left[  \int_{t(\varepsilon)}^t  J_{5}(s)ds + \int_{t(\varepsilon)}^t  J_{6}(s)ds\right],
    \end{equation}
     where    
    \begin{equation*} \label{eq A 11}
    \begin{split}
     J_{5}(s):=&\, \int_{\mathbb R}dy  \int_{\mathbb R}dz  \Big| p_{[t-s,\, t+\varepsilon-s]}(x-y)- p_{[t-s,\, t+\varepsilon-s]}( x-z)  \Big|^2\\
    &  \ \,\,\,  \cdot  |y-z|^{2H-2} \left\|\sigma\big(u(s, y)\big)- \sigma\big(u(t(\varepsilon), x)\big)\right\|_{L^{p}(\Omega)} ^2,\\
     J_{6}(s):=&  \, \int_{\mathbb R}dy  \int_{\mathbb R}dz\,  p_{[t-s,\, t+\varepsilon-s]}( x-z)^2
       |y-z|^{2H-2} \left\|\sigma\big(u(s, y)\big)- \sigma\big(u(s, z)\big)\right\|_{L^{p}(\Omega)} ^2.
     \end{split}
    \end{equation*}

    By  \eqref{eq bound} and  \eqref{eq Holder},  there exists  a constant $c_{4,8}\in (0,\infty)$ satisfying  that for any $\vartheta\in (0, \vartheta_0]$,
        \begin{equation*} 
    \begin{split}
      J_{5}(s)\le&\  c_{4,8}   \left(s-t(\varepsilon)\right)^{\vartheta}
     \int_{\mathbb R}dy  \int_{\mathbb R}dz    \left| p_{[t-s,\, t+\varepsilon-s]}(x-y)- p_{[t-s,\, t+\varepsilon-s]}( x-z)  \right|^2 |y-z|^{2H-2} \\
    &\,+ c_{4,8}   \int_{\mathbb R}dy  \int_{\mathbb R}dz   \left| p_{[t-s,\, t+\varepsilon-s]}(x-y)- p_{[t-s,\, t+\varepsilon-s]}( x-z)  \right|^2  |y-z|^{2H-2}  |y-x|^{2\vartheta}\\
    =:&\, c_{4, 8}\left[J_{5,1}(s)+ J_{5,2}(s)\right].
    \end{split}  \end{equation*}

       Since $|s-t(\varepsilon)|\le \varepsilon^{\theta}$ for any $s\in [t(\varepsilon), t]$, we  appeal to Plancherel's identity and a change of variables   to get
     \begin{equation} \label{eq A 112}
    \begin{split}
    & \int_{t(\varepsilon)}^t  J_{5,1}(s)ds\\
     \le&\, \varepsilon^{\theta  \vartheta} \int_{t(\varepsilon)}^t ds   \int_{\mathbb R}dy  \int_{\mathbb R}dz    \Big| p_{[t-s,\, t+\varepsilon-s]}(x-y)- p_{[t-s,\, t+\varepsilon-s]}( x-z)  \Big|^2  |y-z|^{2H-2}\\
    \le  &\,   \varepsilon^{\theta \vartheta} \int_{t(\varepsilon)}^t ds   \int_{\mathbb R}d\xi  \int_{\mathbb R}dh\,  e^{-2(t-s)\xi^2} \left|e^{- \varepsilon \xi^2}- 1 \right|^2  \left| e^{-i \xi h}-1 \right|^2|h|^{2H-2}\\
    \le  &\,   \varepsilon^{\theta \vartheta} \int_{t(\varepsilon)}^t ds   \int_{\mathbb R}d\xi      e^{-2(t-s)\xi^2} \left|e^{- \varepsilon \xi^2}- 1 \right|^2 |\xi|^{1-2H} \cdot  \int_{\mathbb R} \left| e^{-i  h}-1 \right|^2 |h|^{2H-2} dh \\
    =  &\, \frac{1}{2}  \varepsilon^{\theta \vartheta}   \int_{\mathbb R}d\xi   \left(1-e^{-2\varepsilon^{\theta}\xi^2}\right)  \left|1- e^{- \varepsilon  \xi^2} \right|^2 |\xi|^{-1-2H}    \cdot  \int_{\mathbb R} \left| e^{-i  h}-1 \right|^2 |h|^{2H-2} dh \\
    \le &\, \frac{1}{2}  \varepsilon^{\theta \vartheta}   \int_{\mathbb R}d\xi     \left|1- e^{- {\varepsilon } \xi^2} \right|^2 |\xi|^{-1-2H}    \cdot  \int_{\mathbb R} \left| e^{-i  h}-1 \right|^2 |h|^{2H-2} dh \\
      \le \, &   c_{4,9} \varepsilon^{H+\theta \vartheta}.
    \end{split}  \end{equation}
    Here, $c_{4,9}\in (0,\infty)$ is a constant.

     Using a change of variables, we obtain
    \begin{align*}
    J_{5,2}(s) =&\, \int_{\mathbb{R}}dy\int_{\mathbb{R}}dz\left|p_{[t-s,t+\varepsilon-s]}(x-y)-p_{[t-s,t+\varepsilon-s]}(x-z)\right|^2|y-z|^{2H-2}|y-x|^{2\vartheta}\\
=&\,\int_{\mathbb{R}}dy\int_{\mathbb{R}}dh\left|p_{[t-s,t+\varepsilon-s]}(y)-p_{[t-s,t+\varepsilon-s]}(y+h)\right|^2|h|^{2H-2}|y|^{2\vartheta}\\
    \le &\, \int_{\mathbb{R}}dy\int_{\mathbb{R}}dh\left|p_{t-s }(y)-p_{t-s}(y+h)\right|^2|h|^{2H-2}|y|^{2\vartheta}\\
&+\int_{\mathbb{R}}dy\int_{\mathbb{R}}dh\left|p_{t+\varepsilon-s}(y)-p_{t+\varepsilon-s}(y+h)\right|^2|h|^{2H-2}|y|^{2\vartheta}.
    \end{align*}
       By \eqref{eq scaling} and a change of variables, we have that for any  $0\le \vartheta< \frac{1}{2}-H$ and $z\in \{0, \varepsilon\}$,
\begin{align*}
&\int_{\mathbb{R}}dy\int_{\mathbb{R}}dh\left|p_{t+z-s}(y+h)-p_{t+z-s}(y)\right|^2|h|^{2H-2}|y|^{2\vartheta}\\
=&\, (t+z-s)^{-1}\int_{\mathbb{R}}dy\int_{\mathbb{R}}dh\left|p_{1}\left((t+z-s)^{-\frac1{2}}(y+h)\right)-p_1\left((t+z-s)^{-\frac1{2}}y\right)\right|^2
|h|^{2H-2}|y|^{2\vartheta}\\
=&\, (t+z-s)^{\vartheta+H-1  }\int_{\mathbb{R}}dy\int_{\mathbb{R}}dh\left|p_1(y+h)-p_1(y)\right|^2|h|^{2H-2}|y|^{2\vartheta}\\
 \le &\, c_{4,10} (t+z-s)^{\vartheta+H-1},
\end{align*}
where  $c_{4,10} $  is a finite constant by  Lemma \ref{lem green 2}.  Hence, for $0\le \vartheta< \frac{1}{2}-H$ and $\vartheta\le \vartheta_0$,
we have
 \begin{align} \label{eq A 111-2}
\int_{t(\varepsilon)}^t  J_{5,2}(s)ds  \le c_{4,11}\left( \left(\varepsilon+\varepsilon^{\theta}\right)^{\vartheta+H}+\varepsilon^{\theta\left(\vartheta+H\right)}\right)
 \le 3c_{4,10} \varepsilon^{\theta\left(\vartheta+ H\right)}.
\end{align}

  Using the same argument as that in the proof of  \eqref{eq A 320},     we have
    \begin{equation} \label{eq A 22-2}
    \begin{split}
     \int_{t(\varepsilon)}^t J_{6}(s) ds\le    \,  c_{4,11}  \varepsilon^{\frac12},
     \end{split}
    \end{equation}
    where $c_{4,11}\in (0,\infty)$.

Putting   \eqref{eq A decomp1}, \eqref{eq A 112} \eqref{eq A 111-2} and   \eqref{eq A 22-2} together, we obtain \eqref{eq J 3 est}       for any $\vartheta\in (0,  \vartheta_0]\cap \left(0,  \frac12-H\right)$.    The  proof of part (c) in Lemma  \ref{lem approximation} is complete.
                    \end{proof}

    \subsection{Proof of (d) in Lemma  \ref{lem approximation}}
\begin{proof}[Proof of (d) in Lemma  \ref{lem approximation}]
     By the Cauchy-Schwarz inequality, \eqref{eq Holder} and the Lipschitz continuity of $\sigma$, we have
  \begin{equation}\label{eq J1a 6}
     \begin{split}
          &  \left\|\mathcal J_{3, \theta}'-   \widetilde{\mathcal J}_{3,\theta} \right\|_{L^{p}(\Omega)}^2\\
      \le &\,  \left\|\sigma(u(t, x))-\sigma\left(u\left(t(\varepsilon), x\right)\right)  \right\|_{L^{2p}(\Omega)}^2\\
      &\,\,\cdot      \left\|\int_{t(\varepsilon)}^t\int_{\mathbb R}\left[p_{t+\varepsilon-s}(x-y)-p_{t-s}(x-y)\right] W(ds,dy)\right\|_{L^{2p}(\Omega)}^2\\
      \le &\, c_{4, 12}  L_{\sigma}^2 \varepsilon^{2\vartheta_0\theta}  \left\|\int_{t(\varepsilon)}^t\int_{\mathbb R}\left[p_{t+\varepsilon-s}(x-y)
      -p_{t-s}(x-y)\right] W(ds,dy)\right\|_{L^{2p}(\Omega)}^2,
            \end{split}
     \end{equation}
     where  $c_{4,12}>0$.

     By \eqref{eq BDG} and Lemma \ref{lem int p}, we have
    \begin{equation}\label{eq J1a 7}
     \begin{split}
     & \left\|\int_{t(\varepsilon)}^t\int_{\mathbb R}\left[p_{t+\varepsilon-s}(x-y)
      -p_{t-s}(x-y)\right] W(ds,dy)\right\|_{L^{2p}(\Omega)}^2\\
      \le &\, c_{4,13}\int_{t(\varepsilon)}^t ds   \int_{\mathbb R}dy  \int_{\mathbb R}dz   \big|p_{t+\varepsilon-s}(  x-y)- p_{t+\varepsilon-s}(  x-z) +p_{t-s}(  x-z)-p_{t-s}(  x-y) \Big|^2 |y-z|^{2H-2} \\
      \le &\,2c_{4,13} \int_{t(\varepsilon)}^t ds   \int_{\mathbb R}dy  \int_{\mathbb R}dz   \big|p_{t+\varepsilon-s}(  x-y)- p_{t+\varepsilon-s}(  x-z)\Big|^2 |y-z|^{2H-2} \\
      &+\,2c_{4,13}\int_{t(\varepsilon)}^t ds   \int_{\mathbb R}dy  \int_{\mathbb R}dz   \big|p_{t-s}(  x-y)- p_{t-s}(  x-z)\Big|^2 |y-z|^{2H-2}\\
      \le &\, c_{4,14}\int_{t(\varepsilon)}^t (t+\varepsilon-s)^{H-1}ds+ c_{4,14 }\int_{t(\varepsilon)}^t (t-s)^{H-1}ds\\
     \le &\, \frac{2c_{4,14} }{H}  \varepsilon^{H\theta}.
      \end{split}
     \end{equation}
     Here, $c_{4,13}, c_{4,14} >0$.
     
  Putting \eqref{eq J1a 6} and \eqref{eq J1a 7} together, we get \eqref{eq J14 est}.
              The proof of part (d) in Lemma \ref{lem approximation} is complete.
         \end{proof}

 \section{Proof  of Proposition \ref{coro t LIL} }
 
   \subsection{Proof  of  Proposition \ref{coro t LIL}}
 Using the arguments in \cite{WX2024} and Theorem \ref{thm main}, we can obtain  Proposition \ref{coro t LIL}.
More precisely,     the results of   Proposition \ref{coro t LIL} at $t>0$  follow  from Lemma  \ref{lem interpolation}, the decomposition \eqref{eq decom},     and  Khinchin's and Chung's laws of the iterated logarithm for fractional Brownian motions (e.g., \cite[Section 7]{LS01}, \cite{MR1995}, \cite{Tal96}). The results  at  the origin  follow  from  Lemma  \ref{lem interpolation},  Proposition \ref{coro t LIL0}, and the remarks in Section \ref{sec rem initial}.

   \begin{lemma}\label{lem interpolation}  For every $t\in [0,T]$, $x\in \mathbb R$, and $\eta \in \mathcal I$,   with probability one,  
   \begin{equation}\label{eq  interpolation}
   \begin{split}
  & \sup_{0< \varepsilon<\delta} \left|\big(\mathcal D_{\varepsilon} u\big)(t,x) -\left[ p_{t+\e}(\cdot)-p_t(\cdot)\right]*u_0(x)-\sigma(u(t,  x))\big(\mathcal D_{\varepsilon} v\big)(t,x)\right|\\
   =&\, o\left(\delta^{\frac{H}{2}+\eta}\right), \ \ \ \ \text{  as }  \delta\downarrow 0.
   \end{split}
   \end{equation} 
       \end{lemma}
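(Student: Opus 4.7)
The strategy is to promote the $L^p$-estimate of Theorem \ref{thm main} into a uniform almost-sure bound through a dyadic Borel--Cantelli argument combined with a chaining step, exploiting the linearization cancellation that Theorem \ref{thm main} encodes. Set
\[
R(\varepsilon) := (\mathcal D_\varepsilon u)(t,x) - [p_{t+\varepsilon}(\cdot) - p_t(\cdot)]*u_0(x) - \sigma(u(t,x))(\mathcal D_\varepsilon v)(t,x).
\]
It suffices to prove that for some $\eta'>\eta$ there exists an almost-surely finite $C(\omega)$ with $|R(\varepsilon)| \le C(\omega)\,\varepsilon^{H/2+\eta'}$ for every sufficiently small $\varepsilon>0$, since then $\sup_{0<\varepsilon<\delta}|R(\varepsilon)| \le C(\omega)\,\delta^{H/2+\eta'} = o(\delta^{H/2+\eta})$ as $\delta \downarrow 0$.

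Since $\mathcal I$ is an interval and every $\eta \in \mathcal I$ satisfies $\eta < \vartheta_0$, I would pick $\eta_1, \eta_2 \in \mathcal I$ with $\eta < \eta_1 < \eta_2$. Along the dyadic sequence $\varepsilon_n := 2^{-n}$, Theorem \ref{thm main} with exponent $\eta_2$ yields $\|R(\varepsilon_n)\|_{L^p(\Omega)} \le c\,\varepsilon_n^{H/2+\eta_2}$ for every $p \ge 1$. Fixing $p$ large enough that $p(\eta_2-\eta_1) > 1$, Chebyshev's inequality gives
\[
\mathbb P\bigl(|R(\varepsilon_n)| > \varepsilon_n^{H/2+\eta_1}\bigr) \le c^p\,2^{-np(\eta_2-\eta_1)},
\]
which is summable, so Borel--Cantelli delivers $|R(\varepsilon_n)| \le \varepsilon_n^{H/2+\eta_1}$ for all large $n$, almost surely.

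The main step is to fill the gaps $[\varepsilon_{n+1},\varepsilon_n]$. For $\varepsilon_1 < \varepsilon_2$ in this interval, applying Theorem \ref{thm main} at the base time $t+\varepsilon_1$ with increment $\varepsilon_2-\varepsilon_1$ and rearranging yields the identity
\[
R(\varepsilon_1) - R(\varepsilon_2) = -E_{\varepsilon_1,\varepsilon_2} - [\sigma(u(t+\varepsilon_1,x)) - \sigma(u(t,x))]\,[v(t+\varepsilon_2,x) - v(t+\varepsilon_1,x)],
\]
where $\|E_{\varepsilon_1,\varepsilon_2}\|_{L^p} \le c|\varepsilon_2-\varepsilon_1|^{H/2+\eta_2}$. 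Combining this with Lipschitz continuity of $\sigma$, the temporal H\"older bound $\|u(t+\varepsilon_1,x)-u(t,x)\|_{L^{2p}} \le c\,\varepsilon_1^{\vartheta_0}$ from \eqref{eq Holder}, and the $H/2$-temporal H\"older regularity of $v(\cdot,x)$ inherited from \eqref{eq decom}, one obtains
\[
\|R(\varepsilon_1)-R(\varepsilon_2)\|_{L^p(\Omega)} \le c\,|\varepsilon_2-\varepsilon_1|^{H/2+\eta_2} + c\,\varepsilon_n^{\vartheta_0}\,|\varepsilon_2-\varepsilon_1|^{H/2}.
\]
A standard dyadic chaining on $[\varepsilon_{n+1},\varepsilon_n]$, valid for $p > 2/H$, then gives $\|\sup_{\varepsilon \in [\varepsilon_{n+1},\varepsilon_n]} |R(\varepsilon)-R(\varepsilon_n)|\|_{L^p(\Omega)} \le C\,2^{-n(H/2+\vartheta_0)}$. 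A second Borel--Cantelli argument, with $p$ chosen so that also $p(\vartheta_0 - \eta_1) > 1$, upgrades this to the almost-sure estimate $\sup_{\varepsilon \in [\varepsilon_{n+1},\varepsilon_n]} |R(\varepsilon)-R(\varepsilon_n)| \le 2^{-n(H/2+\eta_1)}$ for all large $n$. Pasting across dyadic scales completes the argument.

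The main obstacle is the chaining step. A direct attempt to bound $|R(\varepsilon_1)-R(\varepsilon_2)|$ from the individual H\"older moduli of $u$ and $v$ would yield only the exponent $\vartheta_0 \wedge (H/2)$, strictly below $H/2+\eta$. The crucial saving is that the dominant fBm-scale increment $v(t+\varepsilon_2,x)-v(t+\varepsilon_1,x)$ enters $R(\varepsilon_1)$ and $R(\varepsilon_2)$ with coefficients that nearly cancel, leaving the small residual $\sigma(u(t+\varepsilon_1,x))-\sigma(u(t,x))$ that supplies the extra factor $\varepsilon_n^{\vartheta_0}$. This cancellation, which produces the effective modulus exponent $\vartheta_0 + H/2 > H/2 + \eta$, is precisely what the linearization in Theorem \ref{thm main} encodes, and making it quantitatively usable in the chaining is the technical heart of the proof.
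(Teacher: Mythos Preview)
Your approach is correct but genuinely different from the paper's. The paper does \emph{not} exploit the cancellation in $R(\varepsilon_1)-R(\varepsilon_2)$; instead it chooses a $\delta$-dependent grid $\mathscr X(\delta)=\{j\delta^{1+2\eta/H}: 0\le j<\delta^{-2\eta/H}\}$, applies Theorem~\ref{thm main} together with a union bound on the grid, and then bounds the oscillation between grid points using only the \emph{naive} H\"older regularity of $u$ and $v$ separately (via Kolmogorov's continuity theorem): over a gap of size $\delta^{1+2\eta/H}$ the exponent $H/2-\rho$ already produces $\delta^{H/2+\eta-\rho}$, which suffices after a further Borel--Cantelli along $\delta=2^{-n}$. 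Your route---coarse dyadic grid plus a chaining that re-invokes Theorem~\ref{thm main} at the shifted base time $t+\varepsilon_1$ to isolate the cross term $[\sigma(u(t+\varepsilon_1,x))-\sigma(u(t,x))]\,(\mathcal D_{\varepsilon_2-\varepsilon_1}v)(t+\varepsilon_1,x)$---is more structural and makes explicit why the linearization error has better regularity in $\varepsilon$ than either $u$ or $v$ alone. One small correction: your chaining bound on $[\varepsilon_{n+1},\varepsilon_n]$ yields $C\big(2^{-n(H/2+\eta_2)}+2^{-n(H/2+\vartheta_0)}\big)\le C'\,2^{-n(H/2+\eta_2)}$ since $\eta_2<\vartheta_0$, not $2^{-n(H/2+\vartheta_0)}$ as you wrote; the second Borel--Cantelli then needs $p(\eta_2-\eta_1)>1$, which is the same condition you already imposed. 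The paper's fine-grid argument is shorter and avoids reapplying the main theorem, while your argument gives a clearer picture of the mechanism behind the improved regularity of $R$.
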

   \begin{proof}
   Similarly to the proof of     \cite[Proposition 3.2]{HK2017} or \cite[Lemma 4.4]{WX2024},  the statement in  \eqref{eq  interpolation} can be proved by Theorem \ref{thm main} and an interpolation argument. For completeness, we provide a proof. 
     
     For any fixed $t\in [0,T]$, $x\in \mathbb R$,   $\varepsilon, \delta\in (0,1)$, and $\eta \in \mathcal I$,  let
     $$
     \Delta(\varepsilon):=\big(\mathcal D_{\varepsilon} u\big)(t, x) -\left[ p_{t+\e}(\cdot)-p_t(\cdot)\right]*u_0(x)-\sigma(u(t, x))\big(\mathcal D_{\varepsilon} v\big)(x), 
          $$
     and
     $$
     \mathscr X(\delta):=\left\{j\delta^{1+ 2\eta/H}:\, 0\le j<\delta^{-\frac{2\eta}{H}}, \, j\in \mathbb Z\right\}.
     $$
    For any $\varepsilon\in \mathscr X(\delta)$, by Theorem \ref{thm main}, we know that for any $b\in (0,H/2+\eta)$,
     $$
     \mathbb P\left(\max_{\varepsilon\in \mathscr X(\delta)}\big|\Delta(\varepsilon)\big| >\delta^{b}  \right)\le \delta^{-kb}
     \sum_{\varepsilon\in \mathscr X(\delta)}\left\|\Delta(\varepsilon)\right\|_k^k\le c_{5,1} \delta^{  \left(\frac{H}{2}+\eta-b\right)k-\frac{2\eta}{H}},    $$
       where the finite positive constant $c_{5,1}$ does not depend on $\delta\in (0,1)$. Therefore,
   \begin{align}\label{eq P123}
     \mathbb P\left(\sup_{\varepsilon\in (0,\delta)}\big|\Delta(\varepsilon)\big| >3\delta^{b}  \right)
     \le   c_{5,1} \delta^{  \left(\frac{H}{2}+\eta-b\right)k-\frac{2\eta}{H}}+\mathcal P_1(\delta)+\mathcal P_2(\delta),
     \end{align}
     where
     \begin{align*}
     \mathcal P_1( \delta) :=&\,  \mathbb P\left(\max_{\varepsilon\in (0,\delta)\atop \varepsilon'\in \mathscr X(\delta), |\varepsilon-\varepsilon'|
     \le \delta^{ 1+2\eta/H}}\big|u(t+\varepsilon, x) -u(t+\varepsilon', x) \big| > \delta^{b}  \right);\\
     \mathcal P_2( \delta):=&\,  \mathbb P\left(\max_{\varepsilon\in (0,\delta)\atop \varepsilon'\in \mathscr X(\delta), |\varepsilon-\varepsilon'|
     \le \delta^{1+2\eta/H}}|\sigma(u(t, x))|\cdot \big|v(t+\varepsilon, x) -v(t+\varepsilon', x) \big| > \delta^{b}  \right).
     \end{align*}
     By  \eqref{eq Holder} and  Kolmogorov's continuity theorem (see, \cite[Theorem C.6,  pp. 107]{K2014}), for any
     $\rho\in (0, \frac{H}{2}+ \eta-b)$ and $k> (1+\frac{2\eta}{H})/\rho$,  there exists  a constant $c_{5,2}>0$ such that
     $$
     \left\| \max_{\varepsilon\in (0,\delta)\atop \varepsilon'\in \mathscr X(\delta), |\varepsilon-\varepsilon'|\le \delta^{1+ 2\eta/H}}\big|\Phi(t+\varepsilon, x) 
     -\Phi(t+\varepsilon', x) \big|\right\|_{L^k(\Omega)}\le c_{5,2}\delta^{H/2+\eta-\rho},
     $$
   for both possible choices of  $\Phi\in \{u, v\}$.     Additionally,  by    H\"older's inequality, \eqref{eq bound} and  the  Lipschitz property of
   $\sigma$, we also have
          $$
     \left\| \max_{\varepsilon\in (0,\delta)\atop \varepsilon'\in \mathscr X(\delta), |\varepsilon-\varepsilon'|\le \delta^{1+2\eta/H}}|\sigma(u(t, x))|
     \cdot \big|v(t+\varepsilon, x) -v(t+\varepsilon',x) \big|\right\|_{L^k(\Omega)}\le c_{5,3} \delta^{H/2+\eta-\rho},
     $$
     where $c_{5,3}\in (0,\infty)$.  As a consequence, it can be deduced from Chebyshev's inequality that for $i=1,2$,
\begin{align}\label{eq P23}
      \mathcal P_i( \delta)\le c_{5,4}\delta^{k(H/2+\eta-b-\rho)},
 \end{align}
 where $c_{5,4}\in (0,\infty)$.   
 
 By \eqref{eq P123} and \eqref{eq P23}, there exists  a finite positive constant $c_{5,5}$, which does not depend on  $\delta\in(0,1)$,  such that
 \begin{align}\label{eq P24}
         \mathbb P\left(\sup_{\varepsilon\in (0,\delta)}\big|\Delta(\varepsilon)\big| >3\delta^{b}  \right)\le c_{5, 5}\left( \delta^{  \left(\frac{H}{2}+\eta-b\right)k-\frac{2\eta}{H}} +\delta^{k(H/2+\eta-b-\rho)}\right).
 \end{align}
  Choose $k> \max\{ \frac{2\eta}{H}\left(\frac{H}{2}+\eta-b\right),  (1+\frac{2\eta}{H})/\rho \}$.  Replacing $\delta$ by $2^{-n}$ and applying a monotonicity argument with the Borel-Cantelli lemma, we conclude that, almost surely,
 $$
 \sup_{\varepsilon\in (0,\delta)}\big|\Delta(\varepsilon)\big| =O( \delta^{b})\ \ \ \text{as } \delta\downarrow0.
 $$
    Finally, \eqref{eq  interpolation} follows from the arbitrariness of $b\in (0,H/2+\eta)$ and $\eta\in \mathcal I$. The proof is complete.      
     \end{proof}

    By Theorem \ref{thm main}  and using similar arguments from \cite{FKM2015, WX2024}, we obtain    Khinchin's law of the iterated logarithm,  Chung's law of the iterated logarithm   for $u(t,x)$ at any fixed $t>0$ and  $x\in \mathbb R$.
    For the  LILs  at $t=0$, we need to establish the associated  results for the linear SHE by using an approach in Lee and Xiao \cite{LX2023}.
 \subsection{Harmonizable representation for the solution}
Recall the linear SHE $v=\{v(t,x)\}_{t\ge0, x\in \mathbb R}$ given in \eqref{eq SHE linear}.     
  By     \eqref{CovStru}    and the change of variables
$\tau:= 2(t-s) |\xi|^{2}$, we have
\begin{equation}\label{eq v moment}
\begin{split}
 \EE\left[|v(t,x)|^2\right]
=&\,c_{1,H}\int_0^t\int_{\RR}e^{-2(t-s)|\xi|^{2}}|\xi|^{1-2H}d\xi ds \\
=&\, \frac{c_{1,H}}{2^{1-H}}  \int_0^\infty e^{-\tau }\tau^{-H} d\tau \int_0^{t} (t-s)^{H-1} ds \\
=&\, \frac{c_{1,H} }{2^{1-H} H}  \Gamma\left(1-H\right) t^{H}\\
=&\,   \widetilde\kappa^2t^{H},
\end{split}
\end{equation}
where $  \widetilde\kappa$ is given by  \eqref{eq con kappa1}. Furthermore, for any $t, s\ge0, x, y\in \mathbb R$, we have
\begin{equation}\label{eq v  holder}
 \left\|v(t,x)-v(s,y)\right\|_{L^2}\le  c_{5,6}\left( |t-s|^{H/2}+|x-y|^H\right),
\end{equation}
where $c_{5,6}\in(0,\infty)$. See \cite[Theorem 1.1]{BJQ2016} or \cite[Lemma 2.1]{LQW25}.

Let $G_{t, x}(s, y)=p(t-s, x-y)\mathbf 1_{[0,t]}(s)$. It can be verified that 
 the Fourier transform of $G_{t, x}(\cdot, \cdot)$ is
\begin{equation}\label{Eq:FT_g}
\mathcal{F}G_{t, x}(\tau, \xi) =  \frac{e^{-i \xi x}\left(e^{-i\tau t} - e^{-t |\xi|^2}\right)}
{|\xi|^2 - i\tau}, \quad \tau \in \R,\ \xi \in \R .
\end{equation}
  Let $\widetilde W_1$ and $\widetilde W_2$ be two independent space-time Gaussian white 
noises on $\R\times \R $, and  let $\widetilde W = \widetilde W_1 + i \widetilde W_2$.
For each $(t, x) \in \mathbb R_+\times \R $,  define 
   the Gaussian random field $\{v_x(A, t), A \in \mathscr{B}(\R_+)\}$ 
  by
\begin{equation}\label{Eq:SHE-HR}
v_x(A, t) := c_{1,1}^{1/2} \,   \iint_{\{(\tau, \xi) : \max(|\tau|^{H/2}, |\xi|^{H}) \in A\}}
\mathcal{F}G_{t, x}(\tau, \xi)\,  |\xi|^{\frac12-H}
\widetilde W(d\tau, d\xi).
\end{equation}

\begin{lemma}\label{Lem:SHE-a1} 
The  Gaussian random field $\{v_x(A, t), A \in \mathscr{B}(\R_+), t >0, x\in \mathbb R \}$ given by  \eqref{Eq:SHE-HR}  satisfies the following conditions:
\begin{itemize}
\item[(a)] For every $t>0$ and $x\in \mathbb R$, $A \mapsto v_x(A, t)$ is an independently scattered Gaussian 
noise such that  the processes $v_x(A,  t)$ and $v_x(B, t)$ are independent whenever $A$ and $B$ 
are disjoint. $\{v_x(\mathbb{R}_+, t)\}_{t\ge0, x\in \mathbb R}$ has the same law with   $\{v(t,x)\}_{t\ge0, x\in \mathbb R}$ defined by \eqref{eq SHE linear}.
 \item[(b)] 
There exists a finite constant $c_{5,7}>0$ such that for all $0 \le a < b \le \infty$,
  $t>0, x\in \mathbb R$,
\begin{align}
\begin{aligned}\label{Eq:SHE-a1}
 {\| v_x(\mathbb R_+, t)-v_x([a, b), t)   \|}_{L^2} 
 \le c_{5,7} \left( a^{\frac{2}{H}-1}  t   + b^{-1}\right).
\end{aligned}
\end{align} 
\end{itemize}
\end{lemma}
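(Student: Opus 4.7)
To begin part (a), since $\widetilde W$ is independently scattered on $\mathbb R\times\mathbb R$, the formula \eqref{Eq:SHE-HR} defines a Gaussian random measure $A\mapsto v_x(A,t)$ on $\mathscr B(\mathbb R_+)$ that inherits this property: when $A\cap B=\emptyset$, the corresponding wedges $\{(\tau,\xi):\max(|\tau|^{H/2},|\xi|^H)\in A\}$ and its analogue for $B$ are disjoint in $\mathbb R^2$, hence the two Wiener integrals are independent. To identify the law of $v_x(\mathbb R_+,t)$ with that of $v(t,x)$, both being centered Gaussian, I only need to match covariances. The plan is to apply Plancherel's identity in the time variable $s$ (using that $G_{t,x}(\cdot,y)$ is supported in $[0,t]$) to rewrite the Itô isometry
\[
\mathbb E[v(t,x)^2]=c_{1,1}\int_0^t\!\!\int_{\mathbb R}|\mathcal F_y G_{t,x}(s,\xi)|^2|\xi|^{1-2H}d\xi\,ds
\]
as a spectral integral over $(\tau,\xi)\in\mathbb R^2$ involving $|\mathcal F G_{t,x}|^2$, using \eqref{Eq:FT_g}. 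A direct calculation employing $\int_{\mathbb R}(\tau^2+\xi^4)^{-1}d\tau=\pi/\xi^2$, the substitution $u=2t\xi^2$, and the reflection formula $\Gamma(H)\Gamma(1-H)=\pi/\sin(\pi H)$ will then produce exactly $\widetilde\kappa^2 t^H$ from \eqref{eq v moment}, confirming the match.

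For part (b), the independent scattering from (a) will give the orthogonal decomposition
\[
\|v_x(\mathbb R_+,t)-v_x([a,b),t)\|_{L^2}^2=\|v_x([0,a),t)\|_{L^2}^2+\|v_x([b,\infty),t)\|_{L^2}^2,
\]
so I estimate the two pieces separately using
\[
|\mathcal F G_{t,x}(\tau,\xi)|^2=\frac{|e^{-i\tau t}-e^{-t\xi^2}|^2}{\tau^2+\xi^4}.
\]
For the low-frequency piece the integration region is the rectangle $\{|\tau|<a^{2/H},\,|\xi|<a^{1/H}\}$; there I use the elementary bounds $|e^{-i\tau t}-1|\le|\tau|t$ and $|1-e^{-t\xi^2}|\le t\xi^2$ to obtain $|e^{-i\tau t}-e^{-t\xi^2}|^2\lesssim t^2(\tau^2+\xi^4)$. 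This cancels the denominator, and a straightforward integration of $|\xi|^{1-2H}$ over the rectangle yields a contribution of order $t^2 a^{(4-2H)/H}$, giving precisely the $t\,a^{2/H-1}$ term.

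For the high-frequency piece on $\{\max(|\tau|^{H/2},|\xi|^H)\ge b\}$, I use the trivial bound $|e^{-i\tau t}-e^{-t\xi^2}|^2\le 4$ and decompose the region into $\{|\tau|\ge b^{2/H}\}\cup\{|\xi|\ge b^{1/H}\}$. On the $\xi$-tail, integrating first in $\tau$ via $\int_{\mathbb R}d\tau/(\tau^2+\xi^4)=\pi/\xi^2$ reduces matters to $\int_{|\xi|\ge b^{1/H}}|\xi|^{-1-2H}d\xi\simeq b^{-2}$. On the $\tau$-tail, I integrate in $\tau$ first (producing an $\arctan$), then perform the substitution $u=\xi^2/b^{2/H}$ to reduce the problem to the convergent dimensionless integral $\int_0^\infty u^{-1-H}\arctan(u)\,du$, which again contributes $O(b^{-2})$. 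Taking square roots and combining with the low-frequency estimate gives \eqref{Eq:SHE-a1}. The main technical obstacle is part (a): the bookkeeping required to reconcile the $1/(2\pi)$ factor from Plancherel, the normalization of the complex white noise $\widetilde W=\widetilde W_1+i\widetilde W_2$ hidden in the prefactor $c_{1,1}^{1/2}$, and the explicit constant $\widetilde\kappa$ must be executed carefully, since the physical content of the lemma resides precisely in this identification.
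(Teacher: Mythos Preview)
Your approach is essentially the paper's: orthogonal decomposition into $v_x([0,a),t)$ and $v_x([b,\infty),t)$, the mean-value bound $|e^{-i\tau t}-e^{-t\xi^2}|\lesssim t(|\tau|+\xi^2)$ on the low-frequency rectangle, and the trivial numerator bound on the high-frequency region; the paper handles the high-frequency piece by partitioning along the cone $|\tau|=|\xi|^2$ (bounding the denominator by $|\xi|^4$ on one side and by $\tau^2$ on the other) rather than via your $\arctan$ substitution, but both routes yield $O(b^{-2})$. For part~(a) the paper matches the full covariance $\mathbb E[v_x(\mathbb R_+,t)\,v_y(\mathbb R_+,s)]$ rather than just the variance at a single point, and you should do the same---your Plancherel-in-time argument extends to this without change, and the $2\pi$ bookkeeping you flag is indeed the only delicate point.
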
 
\begin{proof} 
For Part  (a),  it is obvious that for every $t>0$ and $x\in \mathbb R$, $A \mapsto v_x(A, t)$   is an independently scattered Gaussian 
noise.   By \eqref{CovStru}, for any $(t, x), (s, y) \in \mathbb R_+ \times \R $,  
\begin{align*}
\E[v_x(\mathbb{R}_+, t) v_y(\mathbb{R}_+, s)] 
 = c_{1,1} \iint_{\R \times \R } \mathcal{F}G_{t, x}(\tau, \xi)\, 
\overline{\mathcal{F}G_{s, y}(\tau, \xi)}\,  |\xi|^{1-2H} \, d\tau \, d\xi 
  = \E[v(t, x) v(s, y)],
\end{align*}
where  $\{v(t,x)\}_{t\ge0, x\in \mathbb R}$ is defined by \eqref{eq SHE linear}. 
Consequently, the processes   $\{v_x(\mathbb{R}_+, t)\}_{t\ge0, x\in \mathbb R}$ and  $\{v(t,x)\}_{t\ge0, x\in \mathbb R}$ are identical in law.

 The proof of  Part (b)   is similar to that of Lemma 7.3 in \cite{DMX17}.
First,
\begin{align*}
 v_x(\mathbb R_+, t)   - v_x([a, b), t) 
 =    v_x([0, a), t)  + v_x([b, \infty), t).
\end{align*}
By   \eqref{Eq:FT_g}, we have
\begin{align}
\begin{split}\label{Eq:a}
 \E\left[ v_x([0, a), t)^2\right] 
 = c_{1,1} \iint_{D_1(a)} \frac{\varphi_1(t, \tau, \xi)^2 + 
\varphi_2(t, \tau)^2}{|\xi|^4 + |\tau|^2}  |\xi|^{1-2H}\, d\tau\, d\xi,
\end{split}
\end{align}
\begin{align}
\begin{split}\label{Eq:b}
 \E\left[ v_x([b, \infty), t) ^2\right] 
  = c_{1,1} \iint_{D_2(b)} \frac{\varphi_1(t, \tau, \xi)^2 + \varphi_2(t, \tau)^2}
{|\xi|^4 + |\tau|^2}  |\xi|^{1-2H}\, d\tau\, d\xi,
\end{split}
\end{align}
where
\begin{align*}
& D_1(a) 
:= \left\{ (\tau, \xi) \in \R \times \R  : \max\left(|\tau|^{H/2}, |\xi|^{H}\right) < a \right\},\\
& D_2(b) 
:=\left \{ (\tau, \xi) \in \R \times \R  : \max\left(|\tau|^{H/2}, |\xi|^{H}\right) \ge b \right\},\\
&\varphi_1(t, \tau, \xi) 
:= \cos(\tau t) - e^{-t|\xi|^2},\\
&\varphi_2(t, \tau)
:= -\sin(\tau t).
\end{align*} 
For \eqref{Eq:a},  noticing  that  
\begin{align*}
|\partial_t \varphi_j| \le |\tau| + |\xi|^2, \quad j = 1, 2.
\end{align*}
  by the mean value theorem, we have
\begin{equation} \label{BD:a} 
\begin{split}
 \E\left[(v_x([0, a), t))^2\right] 
& \le  4 c_{1,1}  t^2 \iint_{D_1(a)}   |\xi|^{1-2H} d\tau\, d\xi\\
 & =  8 c_{1,1}t^2    
  \int_{-a^{2/H}}^{a^{2/H}} d\tau \,  
\int_0^{a^{1/H}} dr \, r^{1-2H}\\
& =\frac{8 c_{1,1}}{1-H}   t^2  a^{ \frac{2(2-H)}{H}}.
\end{split}
\end{equation} 
 
For \eqref{Eq:b}, we can use the bounds $|\varphi_1| \le 2$ and $|\varphi_2| \le 1$ 
to deduce that
\begin{equation} \label{BD:b}
\begin{split}
&\E\left[v_x([b, \infty), t) ^2\right]\\
\le &\, 5 c_{1,1} \iint_{D_2(b)} \frac{        |\xi|^{1-2H}}{ |\xi|^{4} + |\tau|^2}
d\tau\, d\xi\\
  \le &\, 10c_{1,1} \iint_{\{ |\tau| \le r^{2}, r  \ge b^{1/H} \}} 
\frac{      r^{1-2H}}{ r^{4}} d\tau\, dr
+ 10c_{1,1} \iint_{\{ |\tau|^{1/2}  > r>0, |\tau| \ge b^{2/H}  \}}
\frac{|r|^{1-2H}}{|\tau|^2} d\tau\, dr\\
 = &\,  20c_{1,1}\int_{b^{1/H}}^\infty dr\, r^{-3-2H} 
\int_{0}^{r^2} d\tau\,       
+ 20c_{1,1} \int_{b^{2/H}}^\infty d\tau \,  \tau^{-2} 
\int_0^{\tau^{1/2}} dr\, r^{1-2H}\\
 = &\, \frac{10 (2-H)}{H(1-H)} c_{1,1} b^{-2}.
 \end{split}
\end{equation}
  Therefore, \eqref{Eq:SHE-a1} follows immediately from \eqref{BD:a} and \eqref{BD:b}.
     The proof is complete. 
\end{proof}

\subsection{LILs of the linear SHE at zero}\label{LILs at origin}
      \begin{proposition}\label{coro t LIL0}  
        Choose and fix   $x\in \mathbb R$.   Then,  
        \begin{itemize}
        \item[(a)] (Khinchin's LIL)
        \begin{equation}\label{Eq:LIL0}
        \begin{split}
        \lim_{r \to 0}  \sup_{ 0<t<r}  \frac{|v(t, x)|}{t^{H/2}\sqrt{2 \log\log(1/t)}}=  \widetilde\kappa, \ \ \ \text{a.s.}, 
        \end{split}
                 \end{equation}
where $  \widetilde\kappa$ is given by  \eqref{eq con kappa1}.
            \item[(b)]  (Chung's LIL)
        \begin{equation}\label{Eq: CLIL0}
        \begin{split}
        \liminf_{\e \to 0}\sup_{0\le r\le \e}\frac{|v(r, x)|}{(\e/  \log\log(1/\e))^{H/2}}=  \left(  \frac{ \Gamma(2H)}{ \Gamma(H)} \right)^{\frac12}  \lambda_H^{H/2},    \ \ \ \text{a.s.}
        \end{split}
        \end{equation}
        where $\lambda_H$ is the small ball constant of a  fBm with index $\frac{H}{2}$ (see, e.g., \cite[Theorem 6.9]{LS01}).
      \end{itemize}
 \end{proposition}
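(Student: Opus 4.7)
The plan is to exploit the harmonizable representation \eqref{Eq:SHE-HR} from Lemma \ref{Lem:SHE-a1}, which realizes $v(t,x)$ in law as an independently scattered Gaussian random measure indexed by frequency bands. By Lemma \ref{Lem:SHE-a1}(a) I will work with $v_x(\mathbb R_+,t)$ in place of $v(t,x)$ throughout. The key inputs are the variance identity $\mathbb E[v(t,x)^2]=\widetilde\kappa^2 t^H$ from \eqref{eq v moment}, the $L^2$-modulus \eqref{eq v holder}, and the frequency-truncation bounds \eqref{Eq:SHE-a1}, which together permit the isolation of approximately independent ``fBm-like'' contributions to $v$ at geometrically decreasing scales.

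For Khinchin's LIL in part (a), the upper bound will follow from Borell's inequality on each dyadic block $\{t\in[2^{-n-1},2^{-n}]\}$: the variance $\widetilde\kappa^2 t^H$ combined with the modulus \eqref{eq v holder} gives a sharp Gaussian tail, which via the first Borel--Cantelli lemma yields $\limsup\le\widetilde\kappa$. For the matching lower bound, fix $\rho\in(0,1)$ small, set $t_n=\rho^n$, and choose cutoffs $a_n\to 0$ and $b_n\to\infty$ via \eqref{Eq:SHE-a1} so that $\|v_x(\mathbb R_+,t_n)-v_x([a_n,b_n),t_n)\|_{L^2}=o(t_n^{H/2})$. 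Along a thinned subsequence $(n_k)$ for which the intervals $[a_{n_k},b_{n_k})$ are pairwise disjoint, the Gaussians $\{v_x([a_{n_k},b_{n_k}),t_{n_k})\}_k$ become independent with variance $(1-o(1))\widetilde\kappa^2 t_{n_k}^H$, and the second Borel--Cantelli lemma produces the matching lower bound.

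For Chung's LIL in part (b), the argument reduces to the sharp small-ball asymptotic
\[
-\log \mathbb P\!\left( \sup_{0\le r\le \varepsilon}|v(r,x)|\le \lambda \right) \sim \lambda_H\,\kappa^{2/H}\,\varepsilon\, \lambda^{-2/H} \qquad \text{as } \lambda/\varepsilon^{H/2}\downarrow 0,
\]
from which \eqref{Eq: CLIL0} will follow by Borel--Cantelli along $\varepsilon_n=\exp(-n/\log n)$, with the required independence across scales again supplied by disjoint frequency bands in Lemma \ref{Lem:SHE-a1}(a). The upper bound on $-\log \mathbb P$ comes from Anderson's inequality applied to the decomposition $\kappa B^{H/2}(r)=v(r,x)+T(r)$ of Lemma \ref{lem decom}: since $v\perp T$, $\mathbb P(\sup|v|\le\lambda)\ge \mathbb P(\sup|\kappa B^{H/2}|\le\lambda)$, and the right-hand side obeys the scaled fBm small-ball law \cite[Theorem 6.9]{LS01}. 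For the matching lower bound on $-\log \mathbb P$, the plan is to decompose $v=v_{\mathrm{low}}+v_{\mathrm{high}}$ via \eqref{Eq:SHE-HR} at a critical cutoff $r_\varepsilon\sim \varepsilon^{-H/2}$; Anderson applied to this independent splitting yields $\mathbb P(\sup|v|\le\lambda)\le \mathbb P(\sup|v_{\mathrm{high}}|\le\lambda)$, and $v_{\mathrm{high}}$ will be shown to have the same small-ball exponent as $\kappa B^{H/2}$ via a direct covariance computation from \eqref{Eq:SHE-HR} together with \cite[Theorem 6.9]{LS01}.

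The hardest step will be the identification of the Chung constant as $\kappa$ (rather than the smaller $\widetilde\kappa$): naively the one-point variance $\widetilde\kappa^2 t^H$ would suggest $\widetilde\kappa\lambda_H^{H/2}$, and the appearance of the larger constant $\kappa$ reflects that the small-ball probability is governed by the full joint covariance of $v$, which, via the independence of $v$ and $T$ in \eqref{eq decom}, matches that of the scaled fBm $\kappa B^{H/2}$. Pinning down the matching upper bound on $\mathbb P(\sup|v|\le\lambda)$ by identifying the high-frequency part of \eqref{Eq:SHE-HR} with a process whose small-ball constant equals $\lambda_H\kappa^{2/H}$ is where the harmonizable representation will play an essential role.
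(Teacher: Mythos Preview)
For part~(a) your plan coincides with the paper's proof. Both obtain the upper bound by a Gaussian tail estimate along a geometric sequence (the paper uses Talagrand's Lemma~\ref{Tal94}, you invoke Borell; either works), and both obtain the lower bound via the harmonizable decomposition of Lemma~\ref{Lem:SHE-a1}, isolating independent frequency-band contributions so that the second Borel--Cantelli lemma applies. The paper's explicit choice $t_n=\rho_n^{2/H}$ with $\rho_n=\exp(-(n^\delta+n^{1+\delta}))$ and bands $[b_n,b_{n+1})$, $b_n=\exp(n^{1+\delta})$, makes the bands disjoint by construction; your geometric $t_n=\rho^n$ with subsequent thinning achieves the same end.

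For part~(b) the paper gives no argument and simply cites \cite{LW25} (which adapts the method of \cite{KKM23}), so you are attempting more than the paper. Your Anderson-inequality step $\mathbb P(\sup|v|\le\lambda)\ge\mathbb P(\sup|\kappa B^{H/2}|\le\lambda)$ from the decomposition $\kappa B^{H/2}=v+T$ of Lemma~\ref{lem decom} is correct and, once combined with the frequency-band independence, does yield the upper bound $\liminf\le\kappa\lambda_H^{H/2}$ in \eqref{Eq: CLIL0}. The gap is in the opposite direction: for $\liminf\ge\kappa\lambda_H^{H/2}$ you need a matching \emph{upper} bound on $\mathbb P(\sup|v|\le\lambda)$, and your reduction via Anderson to $\mathbb P(\sup|v_{\mathrm{high}}|\le\lambda)$ merely transfers the problem to identifying the sharp small-ball constant of $v_{\mathrm{high}}$. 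The appeal to ``a direct covariance computation together with \cite[Theorem~6.9]{LS01}'' does not close this, because that theorem is specific to fractional Brownian motion and $v_{\mathrm{high}}$ is not one; extracting the exact constant $\lambda_H\kappa^{2/H}$ for $v_{\mathrm{high}}$ (or for $v$ itself) is precisely the substantive content of this half of Chung's LIL, and it requires an argument beyond what you have sketched. Your closing remark that this is ``the hardest step'' is accurate, but the step is not actually carried out.
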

\begin{proof}  
Chung's LIL at the origin in Part (b) was established in \cite{LW25} via an adaptation of the method introduced in \cite{KKM23}.  Next, we are going to prove   Khinchin's LIL at the origin in Part (a).
Fix $x \in \mathbb R$.  For any    $r > 0$, define 
$$L(r) := \sup_{ 0<t<r} \frac{|v(t, x)  |}
{t^{H/2}\sqrt{2 \log\log(1/t)}}. $$ 
To prove \eqref{Eq:LIL0}, we claim that
\begin{equation}\label{LIL:UB}
\lim_{r \to 0+} L(r) \le    \widetilde\kappa, \quad \text{a.s.},
\end{equation}
\begin{equation}\label{LIL:LB}
\lim_{r \to 0+} L(r) \ge   \widetilde\kappa, \quad \text{a.s.}
\end{equation}
We  prove these two bounds separately. 

\noindent({\bf Upper bound}). 
Let $a > 1$ and $\zeta > 0$ be constants. 
For each $n \ge 1$, let 
$$r_n:= a^{-n} \quad \text{and} \quad \theta_n := (1+\zeta)  \widetilde\kappa r_n^{H/2} \sqrt{2 \log\log(1/r_n)}.$$
Consider the event
$$A_n := \bigg\{ \sup_{ 0\le t \le r_n } |v(t, x)  | > \theta_n \bigg\}. $$
We are going to use Lemma \ref{Tal94} to derive an upper bound for $\mathbb P(A_n)$.
For any  $n\ge1$,  by \eqref{eq v moment}, 
$$\sigma^2 := \sup_{0\le t\le     r_n} \|v(t, x)\|^2_{L^2} =   \widetilde\kappa^2r_n^H,$$ 
and by  \eqref{eq v  holder}, for all $t, s\ge 0$,
$$d_v(t, s) \le c_{5,6}  |t-s|^{H/2}. $$
Then, for any  $0 < \eps < \sigma$,
$$N([0,r_n], d_v, \eps) \le c_{5,8} r_n \eps^{-\frac2H},$$  
where $c_{5,8} $ is a constant independent of $\eps$  and $n$. 

For $n$ large enough, $\theta_n >  \widetilde\kappa  r_n^{H/2}\left(1+\sqrt{2/H}\right)$.
Applying Lemma \ref{Tal94} with $\eps_0 = \sigma$ and $p =\frac{2}{H}$, 
 we have
\begin{align*}
\mathbb P(A_n) &\le 2 \left( \frac{Kc_{5,3}^{H/2}  \theta_n}{\sqrt{2/H}\,   \widetilde\kappa r_n^{H/2}}\right)^{2/H} \Phi\left(\frac{\theta_n}{  \widetilde\kappa r_n^{H/2} }\right),
\end{align*}
where $\Phi(x) = (2\pi)^{-1/2} \int_x^\infty e^{-y^2/2} \, dy$.
Using the estimate \eqref{stdGaussian},
we get that
\begin{align*}
\mathbb P(A_n) & \le c_{5,9} {(\log n)}^{1/H} \, n^{-(1+\zeta)^2},
\end{align*}
where $c_{5,9}$ is a finite constant. Hence, $\sum_{n=1}^\infty \mathbb P(A_n) < \infty$.
By the Borel--Cantelli lemma,
\begin{align*}
\limsup_{n \to \infty} \sup_{ 0<t<r_n}
\frac{|v(t, x)  |}{r_n^{H/2} \sqrt{2\log\log(1/r_n)}}
\le (1+\zeta) \widetilde\kappa , \quad \text{a.s.}
\end{align*}
 and thus
 $$\limsup_{n \to \infty} \sup_{ 0<t<r_n}
 \frac{|v(t, x)  |}{r_{n+1}^{H/2} \sqrt{2 \log\log(1/r_n)}} \le a^{H/2} (1+\zeta)    \widetilde\kappa \quad \text{a.s.} 
 $$
This implies that
$$\limsup_{r \to 0+} L(r) \le a^{H/2} (1+\zeta)   \widetilde\kappa , \quad \text{a.s.} $$
  Letting $a \downarrow 1$ and $\zeta \downarrow 0$ along rational sequences, 
we get \eqref{LIL:UB}.

\noindent({\bf Lower bound}).
Fix $0 < \eps < 1$.
Let $0 < \delta < 1$ be a small fixed number (depending on $\eps$) to be determined later. For each $n \ge 1$, let 
$$t_n: =   \rho_n^{2/H}, $$
where $$\rho_n := \exp\left(-\left(n^\delta + n^{1+\delta}\right)\right).$$
According to Lemma \ref{Lem:SHE-a1}, we can write $v(t, x) = v_n(t) + \widetilde v_n(t)$, where
$$v_n(t): = v([b_n, b_{n+1}), t), \quad
\widetilde{v}_n(t) := v_x(\R_+ \setminus [b_n, b_{n+1}), t),$$ 
and $b_n := \exp\left(n^{1+\delta}\right)$.

We aim to prove that
\begin{equation}\label{LIL:Eq1}
\limsup_{n \to \infty} \frac{|v_n(t_n)  |}
{t_n^{H/2}\sqrt{2\log\log(t_n^{-1})}} \ge (1-\eps)    \widetilde\kappa ,
\quad \text{a.s.}
\end{equation}
and
\begin{equation}\label{LIL:Eq2}
\limsup_{n \to \infty} \frac{|\widetilde{v}_n(t_n)  |}
{t_n^{H/2}\sqrt{2 \log\log(t_n^{-1})}} \le \eps,  \quad \text{a.s.}
\end{equation}

To prove \eqref{LIL:Eq1},  for each $n \ge 1$ we define the event
$$B_n := \left\{ |v_n(t_n)  | \ge 
(1- \eps)  \widetilde\kappa   t_n^{H/2} \sqrt{2\log\log(t_n^{-1})} \right\}. $$
By   Lemma \ref{Lem:SHE-a1}, for all $t>0$,
\begin{equation}\label{Chung:Eq1}
\| \tilde{v}_n(t) \|_{L^2} 
\le c_{5,7} \bigg(b_n^{\frac{2}{H} - 1}t + b_{n+1}^{-1} \bigg).
\end{equation} 
 Let $D_n$ be the diameter of $S_n :=\left [0,  \rho_n^{H/2}\right]$ in the metric 
$d_{\tilde{v}_n}$. Then
\begin{equation}\label{Chung:Eq2}
D_n \le 
c_{5,7} \rho_n \left(    (b_n \rho_n)^{\frac{2}{H} - 1}  + (b_{n+1}\rho_n)^{-1} \right).
\end{equation}
Note that $b_n \rho_n = \exp\left(-n^\delta\right)$.
Also, by the mean value theorem, 
$$(n+1)^{1+\delta} - n^{1+\delta} \ge (1+\delta)n^\delta,$$  which implies
$b_{n+1} \rho_n \ge \exp\left(\delta n^\delta\right)$.
When $\delta \le 2/H-1$, 
we have
\begin{equation}\label{Chung:Eq3}
D_n \le c_{5,7} \rho_n \exp\left(-\delta n^\delta\right).
\end{equation}

By the triangle inequality,   \eqref{eq v moment} and \eqref{Chung:Eq3}, we have
\begin{equation}\label{L2-vn}
\begin{split}
\|v_n(t_n)  \|_{L^2} 
& \ge  \|v(t_n)  \|_{L^2} - \|\tilde v_n(t_n)   \|_{L^2}\\
& \ge   \widetilde\kappa ^2\left(1 - c_{5,7}\exp\left(-\delta n^\delta\right)\right)t_n^{H/2}.
\end{split}
\end{equation}
Now,  \eqref{L2-vn}   implies  that for $n$ large,
\begin{align*}
B_n \supset \left\{ |v_n(t_n)  | \ge (1-\eps/2)
\|v_n(t_n)  \|_{L^2} \sqrt{2\log\log (1/\rho_n)} \right\}.
\end{align*} 
Then, by the standard Gaussian estimate \eqref{stdGaussian}, 
we get that  for $n$ large,
\begin{align*}
\mathbb P(B_n) & \ge c_{5, 10} (\log n)^{-1/2} n^{-(1-\eps/2)^2(1+\delta)},
\end{align*}
where $c_{5,10}\in (0,\infty)$. 
Choosing $\delta$ small enough such that $(1-\eps/2)^2(1+\delta)\le 1$,
we have $$\sum_{n=1}^\infty \mathbb P(B_n) = \infty.$$
Hence, by the independence among $\{v_n\}_{n\ge1} $ and 
the second Borel--Cantelli lemma, we get \eqref{LIL:Eq1}.

For \eqref{LIL:Eq2}, we use    \eqref{L2-vn}  to get that
\begin{align*}
& \mathbb P\left\{ |\tilde{v}_n(t_n)  | \ge 
\eps t_n^{H/2} \sqrt{2\log\log(t_n^{-1})} \right\}\\
  \le &\, \mathbb P\left\{ |\tilde{v}_n(t_n)  | \ge 
c_{5,7}^{-1}\tilde \kappa^{-1} \eps \| {v}_n(t_n)  \|_{L^2} \exp\left(\delta n^\delta\right)
\sqrt{2\log\log(1/\rho_n)} \right\}.
\end{align*}
This probability, by standard Gaussian estimate \eqref{stdGaussian}, is bounded above by
$$c_{5,11} \exp\left(-\frac{1}{2}c_{5,7}^{-2}\tilde \kappa^{-2} \varepsilon^2  \exp\left(2\delta n^\delta\right) \log\log(C/\rho_n)\right) 
\le c_{5,12}n^{-2}, $$
for $n$ large, where  $c_{5,12}\in (0,\infty)$. Thus, the Borel--Cantelli lemma implies \eqref{LIL:Eq2}.

Since $v(t) = v_n(t) + \widetilde{v}_n(t)$, combining \eqref{LIL:Eq1} and \eqref{LIL:Eq2} yields
$$\limsup_{n \to \infty} \frac{|v(t_n)  |}
{t_n^{H/2} \sqrt{2\log\log\left(t_n^{-1}\right)}} \ge (1-\eps)   \widetilde\kappa - \eps \quad \text{a.s.} 
$$
Letting $\eps \downarrow 0$ along a rational sequence, we get \eqref{LIL:LB}.  
The proof is complete. 
\end{proof}

\section{Quadratic  variation of the solution to the nonlinear SHE}
 
Let $x\in \mathbb R$ be fixed. For every $N\ge1$, let
\begin{align}\label{eq V u}
V_{N, x}(u):= N^{H-1}\sum_{i=0}^{N-1}\big(u(t_{i+1}, x)-u(t_i, x) \big)^2,
\end{align}
with
\begin{align}\label{eq t i}
t_i=\frac{i}{N} \ \ \text{for } i=0,1, \cdots, N.
\end{align}
We are going to  analyze the asymptotic behavior of the sequence $\{V_{N, x}(u)\}_{ N\ge 1}$, as $N\rightarrow\infty$.
  The idea is to approximate the increment $u(t_{i+1},  x)- u(t_i,  x)$  by 
  $$\sigma\big(u(t_i(\varepsilon), x )\big)\big(v(t_{i+1},   x)- v(t_i,  x)\big),$$ where   $v$ is the solution to the linear SHE \eqref{eq SHE linear} and 
  \begin{align}\label{eq tiN}
  t_i(\varepsilon)=t_i- N^{-\theta},
  \end{align}
   with $\theta$ being given by \eqref{eq const a}.   We first analyze the quadratic variation of the solution to the linear SHE, and then estimate the approximation error.

\subsection{Quadratic  variation of the solution to the   stochastic linear heat equation}
    \subsubsection{A decomposition of the stochastic linear heat equation}
          Recall the stochastic convolution $ \{v(t, x)\}_{(t,x)\in \mathbb R_+\times \mathbb R}$ given by \eqref{eq mild SHE}.
     \begin{lemma}\label{lem decom}    There exists a Gaussian process $\{T(t)\}_{t\ge0}$ independent of $ \{v(t, x)\}_{(t,x)\in \mathbb R_+\times \mathbb R}$ such that the following items hold.
     \begin{itemize}
     \item[(a)] The process 
     \begin{align}\label{eq decom}
       X(t):= \kappa^{-1}\left(v(t,x)+T(t) \right) \ \ \ (t\ge0)
       \end{align}
       is a  fBm with Hurst parameter $\frac{H}{2}$, where $\kappa$ is  given in \eqref{eq constant 1}.
       \item[(b)] $\{T(t)\}_{t\ge0}$ is self-similar with  index $\frac{H}{2}$.
       \item[(c)]  $\{T(t)\}_{t\ge0}$ has a version that is infinitely-differentiable on $(0,\infty)$.
    \item[(d)] For every $t>s>0$,
       \begin{equation}\label{eq T moment}
        \mathbb E\left[|T_t-T_s|^2 \right]\le c_{6,1} \frac{(t-s)^2}{s^{2-H}},
    \end{equation}
    where $c_{6,1}=\frac{2^H \Gamma(1+2H)\Gamma(2-H)\sin(H\pi)}{16\pi}$.
      \end{itemize}
    \end{lemma}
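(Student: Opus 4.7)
The plan is to follow the ``pinned string'' construction of Mueller--Tribe \cite{MT02}. Let $W^{-}$ be an independent copy of $W$ on $(-\infty, 0)\times\mathbb R$, and let $\widetilde W$ denote the concatenated noise on $\mathbb R\times\mathbb R$ (white in time, fractional of Hurst $H$ in space). For the fixed $x \in \mathbb R$ of the statement, define
\begin{equation*}
T(t) := \int_{-\infty}^{0}\int_{\mathbb R}\bigl[p_{t-s}(x-y) - p_{-s}(x-y)\bigr]\,W^{-}(ds, dy), \qquad t \ge 0.
\end{equation*}
Since $T$ is built only from $W^{-}$, it is automatically independent of $v$, and $T(0) = 0$.

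For (a), the key identity is
\begin{equation*}
v(t, x) + T(t) = \int_{-\infty}^{t}\int_{\mathbb R}\bigl[p_{t-s}(x-y) - p_{-s}(x-y)\mathbf{1}_{\{s < 0\}}\bigr]\,\widetilde W(ds, dy) =: U(t) - U(0),
\end{equation*}
where $U$ is the (formal) stationary solution driven by $\widetilde W$ on all of $\mathbb R$ (only its increments are well defined). By time-translation invariance of $\widetilde W$, the Gaussian family $(U(t)-U(s))_{t \ge s}$ depends in distribution only on $t-s$, so $Y(t) := v(t,x) + T(t)$ satisfies $Y(t) - Y(s) \stackrel{d}{=} Y(t - s)$ for $0 \le s < t$. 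By Plancherel in space and the closed-form identity $\int_{\mathbb R}e^{-2u\xi^2}|\xi|^{1-2H}d\xi = (2u)^{H-1}\Gamma(1-H)$, one computes
\begin{equation*}
\mathbb E\bigl[Y(t)^2\bigr] = \mathbb E\bigl[v(t,x)^2\bigr] + \mathbb E\bigl[T(t)^2\bigr] = \widetilde\kappa^2 t^H + \frac{c_{1,1}}{2}\int_{\mathbb R}\bigl|1-e^{-t\xi^2}\bigr|^2|\xi|^{-1-2H}d\xi.
\end{equation*}
The remaining integral is evaluated via the substitution $\eta = t^{1/2}\xi$ together with $\int_0^\infty(1 - e^{-a\eta^2})\eta^{-1-2H}d\eta = a^{H}\Gamma(1-H)/(2H)$; combining with $\Gamma(2H+1) = 2H\Gamma(2H)$ and the reflection formula $\Gamma(H)\Gamma(1-H) = \pi/\sin(\pi H)$ yields $\mathbb E[Y(t)^2] = \kappa^2 t^H$. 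Stationarity of the increments then forces $\mathbb E[Y(t)Y(s)] = (\kappa^2/2)(t^H + s^H - |t-s|^H)$, which is the covariance of $\kappa$ times an fBm of Hurst parameter $H/2$.

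For (b), self-similarity of $T$ with index $H/2$ follows from the heat-kernel scaling $p_{\lambda t}(\lambda^{1/2} x) = \lambda^{-1/2}p_t(x)$ together with the scaling $W^{-}(\lambda d\tau, \lambda^{1/2}dz) \stackrel{d}{=} \lambda^{(1+H)/2}W^{-}(d\tau, dz)$ built into \eqref{CovStru}, after substituting $s = \lambda\tau$ and $y = x + \lambda^{1/2}z$ in the defining integral. For (c), for each $t > 0$ the integrand in the definition of $T(t)$ is smooth in $t$ on the region $s \in (-\infty, 0]$, since $t-s \ge t > 0$; differentiating $k$ times under the integral and applying Plancherel yields
\begin{equation*}
\mathbb E\bigl[|T^{(k)}(t)|^2\bigr] \le C\int_{-\infty}^{0}\int_{\mathbb R}\xi^{4k}e^{-2(t-s)\xi^2}|\xi|^{1-2H}d\xi\,ds \le C_k\, t^{H-2k},
\end{equation*}
which is locally bounded on $(0, \infty)$; the smoothness of the covariance $(t,s)\mapsto \mathbb E[T(t)T(s)]$ on $(0,\infty)^2$ together with a Kolmogorov-type argument applied to the derivative processes then produces an infinitely differentiable modification. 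For (d), writing
\begin{equation*}
\mathbb E\bigl[|T(t) - T(s)|^2\bigr] = c_{1,1}\int_{-\infty}^{0}\int_{\mathbb R}\bigl|e^{-(t-u)\xi^2} - e^{-(s-u)\xi^2}\bigr|^2|\xi|^{1-2H}d\xi\,du,
\end{equation*}
applying the mean-value inequality $|e^{-(t-u)\xi^2} - e^{-(s-u)\xi^2}| \le (t-s)\xi^2 e^{-(s-u)\xi^2}$, and performing the $\xi$-integral (yielding $(2(s-u))^{H-3}\Gamma(3-H)$) followed by the $u$-integral (yielding $s^{H-2}/(2-H)$), gives the bound $(t-s)^2/s^{2-H}$ with the explicit constant $c_{6,1}$ after simplifying via $\Gamma(3-H) = (2-H)\Gamma(2-H)$ and the reflection formula.

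The main obstacle is the careful Fourier/Gamma-function bookkeeping needed to pin down the precise constants: for (a), one must verify exactly the cancellation $\widetilde\kappa^2 + (c_{1,1}/2)\int_{\mathbb R}|1-e^{-\eta^2}|^2|\eta|^{-1-2H}d\eta = \kappa^2$, which reduces via the decomposition $(1-e^{-\eta^2})^2 = 2(1-e^{-\eta^2})-(1-e^{-2\eta^2})$ to the Fourier identities above; and for (d), the exponent and numerical factors must be tracked through the reflection formula to recover the stated $c_{6,1}$.
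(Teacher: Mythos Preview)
Your proposal is correct and follows essentially the same strategy as the paper, with one cosmetic difference: the paper defines $T$ directly in spectral form,
\[
T(t)=\left(\frac{\Gamma(1+2H)\sin(H\pi)}{4\pi}\right)^{1/2}\int_{\mathbb R}\bigl(1-e^{-t\xi^2}\bigr)|\xi|^{-\frac12-H}\,M(d\xi),
\]
with $M$ an independent white noise on $\mathbb R$, whereas you build $T$ in physical space from an independent copy $W^{-}$ of the time--space noise on $(-\infty,0)\times\mathbb R$. Applying Plancherel in space and integrating out the time variable in your definition collapses it exactly to the paper's spectral integral (the prefactor $c_{1,1}/2$ you obtain equals $\Gamma(1+2H)\sin(H\pi)/(4\pi)$), so the two constructions have the same law. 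Your stationary-increments argument for (a) is a nice alternative to the paper's direct increment-variance computation; the constant check $\widetilde\kappa^2+\frac{c_{1,1}}{2}\int_{\mathbb R}|1-e^{-\eta^2}|^2|\eta|^{-1-2H}d\eta=\kappa^2$ goes through exactly as you outline. For (d), your route recovers $c_{6,1}$ correctly via $\Gamma(3-H)=(2-H)\Gamma(2-H)$ and the definition of $c_{1,1}$ (the reflection formula is not actually needed here, only in (a)).
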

    
    \begin{proof}
  While Parts (a)-(c) follow from \cite[Theorem 1]{WZ2021}, which adapts techniques from \cite[Theorem 3.3]{K2014}, Part (d) requires additional quantitative estimates. Below we outline the key steps of this derivation.
        
    Let $M$ denote  a white noise on $\mathbb R$ independent of $W$, and  let   $\{T(t)\}_{t\ge0}$ be a   Gaussian process defined by
     \begin{align}\label{eq T}
     T(t):=\left( \frac{\Gamma(1+2H)\sin(H\pi)}{4\pi}\right)^{\frac12} \int_{-\infty}^{\infty} \left( 1-e^{-t\xi^2}\right)|\xi|^{-\frac12-H} M(\xi), \ \ \ t\ge0.
     \end{align}
    Through a change of variables, we immediately see that $\{T(t)\}_{t \geq 0}$ is self-similar with index $\frac{H}{2}$. Moreover, since $\{T(t)\}_{t \geq 0}$ and $\{v(t, x)\}_{t \geq 0}$ are independent, direct calculation yields that 
      $$
     \mathbb E\left[\Big|\left[v(t+\e, x)+T(t+\e)\right]-\left[v(t, x)+T(t)\right] \Big|^2\right]= \frac{\Gamma(2H)}{\Gamma(H)} \e^{H}.
     $$
     This implies that
     $$
     X(t):=\left(\frac{\Gamma(2H)}{\Gamma(H)}\right)^{-\frac12} \left(v(t,x)+T(t) \right)  \ \ \ (t\ge0)
     $$
     is a fBm with Hurst parameter $\frac{H}{2}$. Using the same argument as that in the proof of \cite[Lemma 3.6]{K2014}, we know that the random function $\{T(t)\}_{t\ge0}$ has a version that is infinitely differentiable on $(0,\infty)$.
    
         By \eqref{eq T} and   the elementary inequality $\left|1-e^{-x}\right|\le x$ for any $x\ge0$,  we have
    \begin{equation*}
    \begin{split}
     \mathbb E\left(|T(t)-T(s)|^2\right)
    =&\, \frac{\Gamma(1+2H)\sin(H\pi)}{4\pi } \int_{\mathbb R} e^{- 2 s |\xi|^{2}}  \left( 1 - e^{-  (t-s)|\xi|^{2} }\right)^2 |\xi|^{-  1-2H}d\xi \\
    \leq &\,  \frac{\Gamma(1+2H)\sin(H\pi)}{2\pi }  (t-s)^2   \int_0^{\infty} e^{- 2 s \xi^{2}}    \xi^{3-2H }d\xi      \\
    = &\,  \frac{\Gamma(1+2H)\sin(H\pi)}{4\pi }(t-s)^2  \left(\frac{1}{2s}\right)^{2-H}    \int_0^{\infty} e^{- \eta}   \eta^{1-H }d\eta,
     \end{split}
     \end{equation*}
     where the change of variables $\eta=2s \xi^2$ is used in the last step.  Hence, \eqref{eq T moment} holds.
    
     The proof is complete.
       \end{proof}

 \subsubsection{Quadratic variation of the perturbed fBm}
 In this part, we recall some results of the quadratic variation for the perturbed fractional Brownian motion, taken from Olivera and Tudor \cite{OT25}.
 
 Fix $x\in \mathbb R$ and let $v(t,x)$ be given by (\ref{pfBm}).  By Lemma~\ref{lem decom} 
$v(t, x)$  can be represented as a perturbed fractional Brownian motion:
  	\begin{equation}\label{pfBm}
		v(t,x)= \kappa B^{H/2}(t)+ T(t),  \ \ \ \ t\geq 0,
	\end{equation}
	where  $\kappa$ is  given in \eqref{eq constant 1}, $B^{H/2}(t)$ is  a  fBm with Hurst parameter $\frac{H}{2}$,  
	and   the process $T(t)$ is a centered Gaussian perturbation term satisfying the properties stated in Lemma~\ref{lem decom}. 
	
	  Recall that $t_{i}, i=0,1,...,N$ are given by \eqref{eq t i}.   	
	 	\begin{lemma}\cite[Lemma 2]{OT25}\label{ll2}
		Fix $x\in \mathbb R$ and let $v(t,x)$ be given by (\ref{pfBm}).   Then, there exists a constant $c_{6,2}>0$ satisfying the following properties.
		\begin{itemize}
		\item[(i)] For every $i\geq 1$,	
		 		 		\begin{equation}\label{2f-2}
			\mathbb{E}\left[ \left( N ^{H-1} \left( v(t_{i+1}, x) - v(t_{i}, x)\right)^{2} -\frac{\kappa^{2}}{N} \right) ^{2}\right]\leq    c_{6,2} N^{-2} ,
		\end{equation}
	\item[(ii)] For every $i,j\geq 1$ with $i\neq j$, 
		 		\begin{equation}\label{14s-1}
				\begin{split}
			 &	\mathbb{E}\left[ \left|\left( N ^{2H-1}\left( v(t_{i+1}, x) - v(t_{i}, x)\right)^{2} -\frac{\kappa^{2}}{N} \right)\left( N ^{2H-1} \left( v(t_{j+1}, x) - v(t_{j}, x)\right)^{2} -\frac{\kappa^{2}}{N} \right)\right|\right]  \\
			  \leq   & \, c_{6,2} N^{-2}  \left(\rho_{H}^{2}(i-j)+ i ^{H/2-1}+j^{H/2-1}\right),  
			\end{split}
		\end{equation}
		with
		\begin{equation} \label{ro}
			\rho_{H}(k):= \frac{1}{2}\left(\vert k+1\vert ^{H}+\vert k-1\vert ^{H}-2\vert k\vert ^{H}\right), \hskip0.4cm k\in \mathbb{Z}.
		\end{equation}
		 \end{itemize}
 \end{lemma}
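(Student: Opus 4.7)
My plan is to reduce the two estimates to explicit Gaussian moment computations for the bivariate Gaussian vector $(\Delta_i v, \Delta_j v)$, where $\Delta_i v := v(t_{i+1}, x) - v(t_i, x)$. The starting point is the observation that the perturbation $T$ constructed in the proof of Lemma \ref{lem decom} comes from a white noise $M$ independent of $W$, so the processes $v(\cdot, x)$ and $T$ are independent. Writing $\kappa\Delta_i B^{H/2} = \Delta_i v + \Delta_i T$ from \eqref{pfBm} and taking covariances gives
\[
\mathrm{Cov}(\Delta_i v, \Delta_j v) = \kappa^2 N^{-H}\rho_H(i-j) - \mathrm{Cov}(\Delta_i T, \Delta_j T),
\]
while taking variances at $i = j$ gives $\mathrm{Var}(\Delta_i v) = \kappa^2 N^{-H} - \mathrm{Var}(\Delta_i T)$. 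The quantitative control is provided by Lemma \ref{lem decom}(d), which with $t_i = i/N$ yields the sharp bound $\mathrm{Var}(\Delta_i T) \le c_{6,1}\, N^{-H}\, i^{H-2}$ for every $i \ge 1$, and via Cauchy--Schwarz $|\mathrm{Cov}(\Delta_i T, \Delta_j T)| \le c_{6,1}\, N^{-H}\, i^{H/2-1} j^{H/2-1}$.

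For part (i), I would split the target quantity into its bias and its centered fluctuation. The bias equals $E[N^{H-1}(\Delta_i v)^2 - \kappa^2/N] = -N^{H-1}\mathrm{Var}(\Delta_i T) = O(N^{-1} i^{H-2})$, so its square is $O(N^{-2} i^{2H-4})$. The fluctuation is handled by the chi-squared identity $\mathrm{Var}((\Delta_i v)^2) = 2(\mathrm{Var}(\Delta_i v))^2 \le 2\kappa^4 N^{-2H}$, yielding $\mathrm{Var}(N^{H-1}(\Delta_i v)^2) \le 2\kappa^4 N^{-2}$. Since $i \ge 1$, summing bias$^2$ and variance produces the $c_{6,2} N^{-2}$ bound asserted in \eqref{2f-2}.

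For part (ii), I would use the Isserlis identity for the centered jointly Gaussian pair $(\Delta_i v, \Delta_j v)$, namely $\mathrm{Cov}\bigl((\Delta_i v)^2, (\Delta_j v)^2\bigr) = 2 [\mathrm{Cov}(\Delta_i v, \Delta_j v)]^2$. Inserting the two-term decomposition above and applying $(a+b)^2 \le 2a^2 + 2b^2$ yields
\[
N^{2H-2}\bigl|\mathrm{Cov}((\Delta_i v)^2, (\Delta_j v)^2)\bigr| \le 4\kappa^4 N^{-2}\rho_H^2(i-j) + 4 c_{6,1}^2 N^{-2}\, i^{H-2} j^{H-2}.
\]
The elementary inequality $i^{H-2} j^{H-2} \le i^{H/2-1} + j^{H/2-1}$, valid for $i,j \ge 1$ since $\min(i,j)^{H-2}\le\min(i,j)^{H/2-1}$ and $\max(i,j)^{H-2}\le 1$, brings the correction term into the asserted form, and the bias product $E[A_i]E[A_j]$ is absorbed analogously. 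To pass from this $L^2$ covariance bound to the $L^1$ estimate on $|A_i A_j|$ stated in \eqref{14s-1}, I would split $A_i = Q_i + \mu_i$ with $Q_i$ a centered second-chaos element and $\mu_i = -N^{H-1}\mathrm{Var}(\Delta_i T)$, and bound the four resulting cross terms using Cauchy--Schwarz and Nelson's hypercontractivity to equate $L^1$ and $L^2$ norms on the chaos factor $Q_i Q_j$.

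The main obstacle is the singular behavior of $\mathrm{Var}(\Delta_i T)$ near $i = 0$, which blows up like $i^{H-2}$; this is precisely what forces the correction terms $i^{H/2-1} + j^{H/2-1}$ in (ii) and the restriction $i, j \ge 1$ throughout. In the downstream application to the variance of $V_{N,x}(v)$, these correction contributions sum to $N^{-2}\sum_{i,j=1}^{N}(i^{H/2-1} + j^{H/2-1}) = O(N^{H/2-1}) \to 0$, and the leading $\rho_H^2$ term is summable since $\sum_{k\in\mathbb Z}\rho_H^2(k) < \infty$ for every $H \in (\tfrac14, \tfrac12)$, which is exactly what is needed for the $L^2$ convergence of the quadratic variation.
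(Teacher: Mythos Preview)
The paper does not prove this lemma; it is quoted from \cite{OT25}. Your outline for part (i), and for the covariance bound in part (ii), is correct and is the natural argument: the decomposition $\kappa B^{H/2}=v+T$ with $v$ and $T$ independent, Isserlis' formula, and the estimate \eqref{eq T moment} are precisely the right ingredients, and the bound $\bigl|\mathbb{E}[A_iA_j]\bigr|\le cN^{-2}\bigl(\rho_H^2(i-j)+i^{H-2}j^{H-2}\bigr)$ follows exactly as you describe.

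Where your plan breaks is the final step, and the fault lies with the statement rather than with your reasoning. As written, \eqref{14s-1} places the absolute value \emph{inside} the expectation, and that version is simply false. Take $i,j$ both of order $N$ with $|i-j|$ also of order $N$: then $\rho_H^2(i-j)+i^{H/2-1}+j^{H/2-1}\to 0$, whereas $\Delta_iv$ and $\Delta_jv$ are nearly independent, so $\mathbb{E}\bigl[|A_iA_j|\bigr]\approx \mathbb{E}|A_i|\cdot\mathbb{E}|A_j|\asymp N^{-2}$ does not decay relative to $N^{-2}$. Hypercontractivity cannot rescue this, because it gives $\|Q_iQ_j\|_1\asymp\|Q_iQ_j\|_2$, and in the independent case $\|Q_iQ_j\|_2=\|Q_i\|_2\|Q_j\|_2$ is bounded below. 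What the downstream application in \eqref{27i-1} actually needs --- and what \cite{OT25} presumably proves --- is the bound with the absolute value \emph{outside}, i.e.\ on $\bigl|\mathbb{E}[A_iA_j]\bigr|$; note also that the prefactor $N^{2H-1}$ in \eqref{14s-1} is a typo for $N^{H-1}$, consistent with \eqref{2f-2} and with how the estimate is invoked in the proof of Proposition \ref{prop variation converge}. Once these typos are corrected, your argument goes through verbatim and the hypercontractivity step becomes unnecessary.
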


 According to   \cite[Proposition 2]{OT25}	and Lemma \ref{lem decom}, we have the following result. 
	\begin{proposition}\label{pp22} 	 Fix $x\in \mathbb R$. For $N \geq 1$, let
		  \begin{align}\label{eq V v}
V_{N, x}(v):= N^{H-1}\sum_{i=0}^{N-1}\left(v(t_{i+1}, x)-v(t_i, x) \right)^2.
\end{align}
where  $t_{i}$ is given in \eqref{eq t i}.
		Then, the sequence $\left\{V_{N, x}(v) \right\}_{N\geq 1}$ converges to $\kappa^{2}$ in $L^{p} (\Omega)$ for every $p\geq 1$, with $\kappa $ given in \eqref{eq constant 1}. Moreover, for $N$ large enough, 
		\begin{equation}\label{2f-10}
			\mathbb{E}\left[ \left| V_{N,x}(v) - \kappa ^{2} \right| ^{2} \right] \leq  c_{6,3} N^{-1}, 
					\end{equation}
		where $c_{6,3}\in (0,\infty)$. 
	\end{proposition}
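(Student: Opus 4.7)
The plan is to reduce the $L^2$ control of $V_{N,x}(v)-\kappa^2$ to a bilinear expansion and then bound the diagonal and off-diagonal pieces using the two estimates packaged in Lemma~\ref{ll2}. Writing
$$
V_{N,x}(v)-\kappa^2 \;=\; \sum_{i=0}^{N-1} A_{i}, \qquad A_{i}:=N^{H-1}\bigl(v(t_{i+1},x)-v(t_i,x)\bigr)^2-\frac{\kappa^2}{N},
$$
we have $\mathbb{E}\bigl[|V_{N,x}(v)-\kappa^2|^2\bigr]=\sum_{i=0}^{N-1}\mathbb{E}[A_i^2]+\sum_{i\ne j}\mathbb{E}[A_iA_j]$, so it suffices to bound these two sums by $O(N^{-1})$.

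For the diagonal contribution, Lemma~\ref{ll2}(i) immediately yields
$$
\sum_{i=0}^{N-1}\mathbb{E}[A_i^2]\;\le\; N\cdot c_{6,2}\,N^{-2}\;=\;c_{6,2}\,N^{-1},
$$
which already delivers the target rate. For the off-diagonal part, Lemma~\ref{ll2}(ii) gives
$$
\sum_{i\neq j}\bigl|\mathbb{E}[A_iA_j]\bigr|\;\le\; c_{6,2}\,N^{-2}\sum_{i\neq j}\Bigl[\rho_H^2(i-j)+i^{H/2-1}+j^{H/2-1}\Bigr].
$$
Using the classical asymptotics $\rho_H(k)\asymp|k|^{H-2}$ as $|k|\to\infty$ together with $H<\tfrac12$, the kernel $\rho_H$ is square-summable on $\mathbb{Z}$, so $\sum_{i\ne j}\rho_H^2(i-j)\le N\sum_{k\in\mathbb{Z}}\rho_H^2(k)=O(N)$; multiplied by $N^{-2}$ this yields an $O(N^{-1})$ contribution, mirroring the diagonal bound. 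The ``perturbation'' terms $\sum_{i\ne j}(i^{H/2-1}+j^{H/2-1})$ will be controlled using a finer bookkeeping: I will exploit Lemma~\ref{lem decom}(d) to track exactly how $T(t)$ enters the cross products $A_iA_j$, so that the factor $t_i^{H-2}$ coming from $\mathbb{E}[(T(t_{i+1})-T(t_i))^2]\le c_{6,1} N^{-2}t_i^{H-2}$ is absorbed into an additional $N^{-1}$, rather than the cruder $N^{H/2-1}$ one would get by naively summing \eqref{14s-1}.

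Once the quantitative bound \eqref{2f-10} is established, the convergence of $V_{N,x}(v)$ to $\kappa^2$ in $L^p(\Omega)$ for every $p\ge 1$ will follow from hypercontractivity in Wiener chaos. Indeed, for fixed $N$, $V_{N,x}(v)$ is a quadratic functional of the Gaussian noise $W$ and therefore lies in the direct sum of the zeroth and second Wiener chaoses; on this finite-dimensional chaos all $L^p$-norms are equivalent to the $L^2$-norm up to constants depending only on $p$, so the $L^2$-rate $N^{-1/2}$ automatically transfers to any $L^p$ with $p\ge 2$, and $L^p$ for $p<2$ is covered by Jensen's inequality.

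The step I expect to be most delicate is the one inside the off-diagonal bound: showing that the contribution of the perturbation $T$ is genuinely $O(N^{-1})$ rather than the weaker $O(N^{H/2-1})$. This requires separating the cross terms $A_iA_j$ into a pure fractional-Brownian-motion piece (handled by the $\rho_H^2$-sum) and mixed/perturbation pieces (handled by \eqref{eq T moment} together with the independence of $T$ and $v(\cdot,x)$ provided by Lemma~\ref{lem decom}), and summing carefully in $i$ and $j$. Everything else is a direct consequence of Lemma~\ref{ll2} and standard chaos estimates.
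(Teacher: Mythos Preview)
Your proposal is correct. Note, however, that the paper does not actually prove this proposition: it simply invokes \cite[Proposition~2]{OT25} together with Lemma~\ref{lem decom}. Your argument is therefore more detailed than what the paper provides, and amounts to a sketch of the cited result's proof.

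Your identification of the delicate point is accurate: summing \eqref{14s-1} directly over all $i\ne j$ leaves the perturbation contribution at order $N^{H/2-1}$, not $N^{-1}$. Your proposed fix---returning to the decomposition $v(t,x)=\kappa B^{H/2}(t)-T(t)$ of Lemma~\ref{lem decom} and exploiting the independence of $B^{H/2}$ and $T$---does the job. Writing $A_i=\kappa^2 B_i+C_i+D_i$ with $B_i$ the centered fBm piece, $C_i$ the mixed term, and $D_i$ the pure-$T$ term, independence kills all cross expectations $\mathbb{E}[B_iC_j]$, $\mathbb{E}[B_iD_j]$, $\mathbb{E}[C_iD_j]$; then \eqref{eq T moment} yields $\mathbb{E}[(\Delta_iT)^2]\le c_{6,1}N^{-H}i^{H-2}$ for $i\ge 1$, and since $H-2<-1$ the series $\sum_i i^{H-2}$ converges, giving $\mathbb{E}\bigl[(\sum_i C_i)^2\bigr]=O(N^{H-2})$ and $\mathbb{E}\bigl[(\sum_i D_i)^2\bigr]=O(N^{-2})$, both $o(N^{-1})$. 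The $i=0$ term is handled separately via self-similarity of $T$ (Lemma~\ref{lem decom}(b)) and contributes $O(N^{-2})$. The hypercontractivity argument for transferring the $L^2$ rate to all $L^p$ is standard and correct.
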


  \subsection{Quadratic  variation of the solution to the nonlinear SHE} 

Now, we prove an approximation result for the temporal increment of the random field   $u(t,x)$.
  \begin{proposition}\label{prop variation converge} Assume    Condition \ref{cond A} holds with $\beta_0>H$. 
Let $V_{N, x}(u)$ be given by \eqref{eq V u}. Then, for every  fixed $x>0$ 
  and   $N\ge 2$,
  \begin{align}\label{eq quad}
  \mathbb E\left[\left| V_{N, x}(u)-  \kappa^2 \int_0^1 \sigma^2\big(u(t, x) \big) dt \right| \right]\le c_{6,4} N^{-\delta},
  \end{align}
  where   the constant  $\kappa$ is given by  \eqref{eq constant 1}, $c_{6,4}>0$ does not depend on $x$ and $N$, and  
 $\delta \in \mathcal I$ which is defined by  \eqref{eq interval} with $\vartheta_0=H/2$.
   \end{proposition}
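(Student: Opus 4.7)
The strategy is a two-stage reduction: first linearize the increment of $u$ via Corollary \ref{corollary a-c}, converting the problem into a statement about a \emph{weighted} quadratic variation of the linear SHE $v$; then decouple the random weights from the Gaussian increments by splitting the driving noise at the lag time $t_i(\varepsilon)=t_i-N^{-\theta}$. Set $\varepsilon=1/N$ and $R_i:=[u(t_{i+1},x)-u(t_i,x)]-\sigma(u(t_i(\varepsilon),x))[v(t_{i+1},x)-v(t_i,x)]$. Corollary \ref{corollary a-c} gives $\|R_i\|_{L^p(\Omega)}\le c\,N^{-H/2-\delta}$ uniformly in $i$. Expanding the square $(u(t_{i+1},x)-u(t_i,x))^2$ and using Cauchy--Schwarz with the moment bounds $\|\sigma(u)\|_{L^p}\le c$ from Theorem \ref{them existence} and $\|v(t_{i+1},x)-v(t_i,x)\|_{L^p}=O(N^{-H/2})$ from Gaussian hypercontractivity and \eqref{eq v  holder}, the contributions of the cross term and the $R_i^2$ piece to $V_{N,x}(u)$ are $O(N^{-\delta})$ and $O(N^{-2\delta})$ respectively in $L^1(\Omega)$ after the $N^{H-1}$ scaling. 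It therefore suffices to prove $\|\widetilde V_N-\kappa^2\int_0^1\sigma^2(u(t,x))\,dt\|_{L^1}=O(N^{-\delta})$, where $\widetilde V_N:=N^{H-1}\sum_{i=0}^{N-1}\zeta_i\,\eta_i$ with $\zeta_i:=\sigma^2(u(t_i(\varepsilon),x))$ and $\eta_i:=(v(t_{i+1},x)-v(t_i,x))^2$.

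For the decoupling, split the Walsh integral $v(t_{i+1},x)-v(t_i,x)=A_i+B_i$ according to whether the time variable lies in $[0,t_i(\varepsilon)]$ or $(t_i(\varepsilon),t_{i+1}]$. Then $\zeta_i$ and $A_i$ are $\mathcal F_{t_i(\varepsilon)}$-measurable, while $B_i$ is independent of $\mathcal F_{t_i(\varepsilon)}$, and in particular of $\zeta_i$. Part (b) of Lemma \ref{lem approximation} applied with $\sigma\equiv 1$ gives $\|A_i\|_{L^2(\Omega)}^2\le c\,N^{-\mathcal G_\vartheta}$ with $\mathcal G_\vartheta=H+2\delta$, and orthogonality together with Lemma \ref{lem decom} yields $\|B_i\|_{L^2(\Omega)}^2=\kappa^2 N^{-H}+O(N^{-H}/((i\vee 1)^{2-H}))+O(N^{-H-2\delta})$. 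Expanding $\eta_i=A_i^2+2A_iB_i+B_i^2$ produces three sums: (i) the $A_i^2$ sum is $O(N^{H-\mathcal G_\vartheta})=O(N^{-2\delta})$ in $L^1$ by Cauchy--Schwarz and Gaussian hypercontractivity on $A_i$; (ii) for the cross sum $N^{H-1}\sum_i\zeta_iA_iB_i$, independence of $B_i$ from $\mathcal F_{t_i(\varepsilon)}$ gives zero mean, and for the variance each $\mathbb E[\zeta_i\zeta_jA_iA_jB_iB_j]$ with $|i-j|>1+N^{1-\theta}$ vanishes because then $B_{\max(i,j)}$ is independent of the other four factors and has zero mean, while the $O(N^{2-\theta})$ remaining close pairs are controlled by Cauchy--Schwarz and hypercontractivity to yield total scaled variance $O(N^{H-\theta-\mathcal G_\vartheta})=o(N^{-2\delta})$; (iii) for $N^{H-1}\sum_i\zeta_iB_i^2$, independence gives $\mathbb E[\zeta_iB_i^2]=\mathbb E[\zeta_i]\|B_i\|_{L^2}^2$, so the mean agrees with $\kappa^2 N^{-1}\sum_i\mathbb E[\zeta_i]$ up to $O(N^{-1})$, and the variance of $\sum_i\zeta_i(B_i^2-\|B_i\|_{L^2}^2)$ is handled by the same distant-pair decorrelation together with $\|B_i^2-\|B_i\|_{L^2}^2\|_{L^2}=O(N^{-H})$.

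Combining the three pieces yields $\widetilde V_N=\kappa^2 N^{-1}\sum_i\zeta_i+O_{L^1}(N^{-\delta})$. It remains to pass from $N^{-1}\sum_i\sigma^2(u(t_i(\varepsilon),x))$ to $\int_0^1\sigma^2(u(t,x))\,dt$ using the temporal Hölder continuity of $u$ from Theorem \ref{them existence} and the Lipschitz property of $\sigma^2$ in moments (via $|\sigma^2(a)-\sigma^2(b)|\le L_\sigma|a-b|(|\sigma(a)|+|\sigma(b)|)$ and Cauchy--Schwarz): the time shift $t_i(\varepsilon)\mapsto t_i$ costs $O(N^{-\theta\vartheta_0})$ and the Riemann sum costs $O(N^{-\vartheta_0})$, both dominated by $N^{-\delta}$ for $\delta\in\mathcal I$. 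The technical core and main obstacle is step (ii) of the decoupling: the weights $\zeta_i$ and the Gaussian increments $B_j$ are not independent because both are built from the single noise $W$, and it is precisely the calibration $\theta=2/(2+\vartheta)$ of Corollary \ref{corollary a-c} that makes the argument work: $\theta$ must be large enough that $\|A_i\|_{L^2}\ll\|B_i\|_{L^2}$ (so the $A_i^2$ piece is negligible), yet small enough that the number $N^{1-\theta}$ of non-decoupled neighbours does not dominate the Cauchy--Schwarz estimate on close pairs; the exponent $\delta$ appearing in the final bound is exactly the one produced by this balance.
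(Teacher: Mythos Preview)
Your proof is correct and follows the same three–step architecture as the paper (linearize via Corollary~\ref{corollary a-c}, decouple the random weights from the Gaussian increments with a close/far split at lag $N^{1-\theta}$, then pass to the Riemann integral). The one genuine difference is the decoupling device. The paper introduces an independent copy $\widetilde W$ of the noise and builds a \emph{modified} increment $(\widetilde{\mathcal D}_\varepsilon v)(t_i,x)$, defined in \eqref{eq Dv tilde}, which is equal in law to $(\mathcal D_\varepsilon v)(t_i,x)$ but independent of $\mathcal F_{t_i(\varepsilon)}$; it can then quote Lemma~\ref{ll2} (taken from \cite{OT25}) to control the covariance of far--apart modified increments via the fBm correlation $\rho_H^2(i-j)$. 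You instead split the true increment as $A_i+B_i$ at the time cut $t_i(\varepsilon)$ and exploit that $B_i$ is mean-zero and independent of $\mathcal F_{t_i(\varepsilon)}$, so distant cross-terms in your sums (ii) and (iii) vanish \emph{exactly}; this bypasses Lemma~\ref{ll2}(ii) entirely. The paper's route is tidier in bookkeeping because the modified increment keeps the exact law of $\mathcal D_\varepsilon v$ and existing perturbed-fBm estimates apply verbatim; your route is more self-contained but must recover $\|B_i\|_{L^2}^2=\kappa^2 N^{-H}+O(N^{-H}i^{H-2})+O(N^{-H-2\delta})$ by hand from Lemma~\ref{lem decom} and the bound on $\|A_i\|_{L^2}$. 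The close-pair count $O(N^{2-\theta})$ and the calibration $\theta=2/(2+\vartheta)$ play identical roles in both arguments.
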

 \begin{proof}
 The proof follows the approach of \cite[Theorem 1]{OT25}; for completeness, we outline the key steps.
  Given $\varepsilon\in (0,t)$,   we define the modified temporal increment of 
 $v$ as
\begin{equation}\label{eq Dv tilde}
\begin{split}
(\widetilde{\mathcal D}_{\varepsilon} v)(t, x) 
 := &\,   \int_0^{t(\varepsilon)}\int_{\mathbb R} \big(p(t+\varepsilon-s, x-y)- p(t-s, x-y)  \big) \widetilde W(ds,dy)\\
 &   \, + \int_{t(\varepsilon)}^{t } \int_{\mathbb R} \big(p(t+\varepsilon-s, x-y)- p(t-s, x-y)  \big) W(ds,dy)\\
 &\,  + \int_t^{t+\varepsilon} \int_{\mathbb R} p(t+\delta-s, x-y) W(ds,dy),
 \end{split}
\end{equation}
where $\widetilde W$ is an independent copy of the  noise $W$. The modified increment operator $(\widetilde{\mathcal D}_{\varepsilon} v)(t, x)$ equals in distribution to the forward difference 
$({\mathcal D}_{\varepsilon} v)(t, x)  := v(t+\varepsilon,x) - v(t,x)$ and it is independent of the $\sigma$-algebra $\mathcal{F}_{t(\varepsilon)} = \sigma\left\{W([0,s]\times A) : s \leq t(\varepsilon), A \in \mathcal{B}(\mathbb{R})\right\}$.
 In the following, we take    $$\varepsilon=N^{-1}  \ \ \ \text{and} \ \ \   t(\varepsilon)= t-\varepsilon^{\theta},$$ where $\theta$ is given by \eqref{eq const a}.   
        
For every  fixed $x \in \mathbb{R}$,  we have
\begin{align*}
& \,  V_{N, x}(u) -   \kappa^2 \int_0^1 \sigma^2\left(u(t, x) \right) dt \\
=  
  & \,    N^{H-1}\sum_{i=0}^{N-1} \left[ \left(u(t_{i+1}, x)-u(t_i, x) \right)^2 -  \sigma^2\left(u\left(t_i(\varepsilon), x\right)\right) \left(\widetilde{\mathcal D}_{\varepsilon}v\right)^2\left(t_i, x\right)  \right] \\
  & \, + \sum_{i=0}^{N-1} \sigma^2\left(u(t_i(\varepsilon), x)\right) \left[ N^{H-1}  \left(\widetilde{\mathcal D}_{\varepsilon}v\right)^2\left(t_i, x\right) -  \kappa^2  N^{-1}\right] \\
  & \, +   \kappa^2 \left[N^{-1} \sum_{i=0}^{N-1} \sigma^2\left(u\left(t_i(\varepsilon), x\right)\right) -  \int_0^1 \sigma^2\left(u\left(t, x\right) \right) dt \right]\\
    =:& \,    L_{1, N} + L_{2, N} + L_{3, N}.
\end{align*} 
We analyze the three summands separately. 

Applying the Cauchy-Schwarz inequality and \eqref{eq main 21}, we  have
\begin{equation}\label{eq B1N-1}
\begin{split}
   \mathbb{E}\left[|L_{1, N}|\right] 
\leq & \, N^{H-1} \sum_{i=0}^{N-1}   \left\| \left(u(t_{i+1}, x)-u(t_i, x) \right) -  \sigma\left(u(t_i(\varepsilon), x)\right) \left(\widetilde{\mathcal D}_{\varepsilon}v\right)\left(t_i, x\right) \right\|_{L^2}  \\
&\, \times \,     \left\| \left(u(t_{i+1}, x)-u(t_i, x) \right) +  \sigma\left(u(t_i(\varepsilon), x)\right) \left(\widetilde{\mathcal D}_{\varepsilon}v\right)\left(t_i, x\right) \right\|_{L^2} \\
\leq \, & \, c_{3,8} N^{  H/2-1-\delta}   \sum_{i=0}^{N-1}   \left\| \left(u(t_{i+1}, x)-u(t_i, x) \right) +  \sigma\left(u(t_i(\varepsilon), x)\right) \left(\widetilde{\mathcal D}_{\varepsilon}v\right)\left(t_i, x\right)  \right\|_{L^2},
\end{split}
\end{equation}
where  $\delta$ is the constant in $\mathcal I$ given by \eqref{eq main 2}.

Since $\left(\widetilde{\mathcal D}_{\varepsilon}v\right)(t, x)$ is independent of the sigma-algebra generated by $\{u(s, x),\, 0 \le s \le t_i(\varepsilon)\}$,       by Minkowski's inequality,  Theorem \ref{them existence}, and \eqref{eq v  holder}, we have that  for  $ i = 0, 1, \cdots, N,$
\begin{equation}\label{eq B1N-2}
\begin{split}
 & \left\| \left(u(t_{i+1}, x)-u(t_i, x) \right) +  \sigma\left(u(t_i(\varepsilon), x)\right) \left(\widetilde{\mathcal D}_{\varepsilon}v\right)\left(t_i, x\right)  \right\|_{L^2}  \\
    \leq \, & \,  \left\| u(t_{i+1}, x)-u(t_i, x) \right\|_{L^2} +  \left\| \sigma \left(u(t_i(\varepsilon), x)\right)\right\|_{L^2}\cdot   \left\|  \left(\widetilde{\mathcal D}_{\varepsilon}v\right)\left(t_i, x\right)  \right\|_{L^2}  \\
   \le \, & \, c_{6,5} N^{- H/2},
\end{split}
\end{equation}
where $c_{6,5}>0$.

Putting \eqref{eq B1N-1} and \eqref{eq B1N-2} together, we obtain that 
\begin{equation}\label{eq bound B 1 N}
    \mathbb{E}\left[|L_{1, N}|\right] \leq c_{3,8}c_{6,5} N^{- \delta}.
\end{equation}
We estimate  $ L_{2,N}$ as follows 
	\begin{align*}
		\mathbb{E} \left[L_{2,N}^{2}\right]  =& \,	\mathbb{E} \Bigg[\sum_{i,j=0} ^{N-1} \sigma^{2}   (u(t_{i}(\varepsilon),x)\sigma  ^{2} \left(u(t_{j}(\varepsilon),x\right) \left( N ^{H-1}\left(\widetilde{\mathcal D}_{\varepsilon}v\right)^2\left(t_i, x\right)- \kappa^{2} N^{-1}\right) \\
		 & \ \  \ \cdot  \left( N ^{H-1}\left(\widetilde{\mathcal D}_{\varepsilon}v\right)^2\left(t_j, x\right)- \kappa^{2} N^{-1}\right)\Bigg]\\
		 =& \,\sum_{i=0} ^{N-1} 	\mathbb{E} \left[ \sigma^{4}   (u(t_{i}(\varepsilon),x)\right]\cdot	\mathbb{E}\left[\left( N ^{H-1}\left(\widetilde{\mathcal D}_{\varepsilon}v\right)^2\left(t_i, x\right)- \kappa^{2} N^{-1}\right)^{2}\right] \\
		 & \ \ \ + 2	\mathbb{E} \Bigg[\sum_{0\le j <i\le N-1}\sigma^{2}   (u(t_{i}(\varepsilon),x)\sigma  ^{2} (u(t_{j}(\varepsilon),x)\left( N ^{H-1}\left(\widetilde{\mathcal D}_{\varepsilon}v\right)^2\left(t_i, x\right)- \kappa^{2} N^{-1}\right)\\
		 & \ \ \ \ \ \cdot \left(N ^{H-1}\left(\widetilde{\mathcal D}_{\varepsilon}v\right)^2\left(t_j, x\right)- \kappa^{2} N^{-1}\right)\Bigg]\\
		  =:&\,  l_{2,N}^{(1)}+ l_{2,N}^{(2)},
	\end{align*}
	where we used the fact that $\left(\widetilde{\mathcal D}_{\varepsilon}v\right)\left(t_i, x\right)$ is independent of $ \sigma \{ u(s, x), s\leq t_{i}(\varepsilon), x\in \mathbb{R}\}$.  By the Lipschitz assumption on $\sigma$,  \eqref{eq bound}, and the bound (\ref{2f-2}) in Lemma \ref{ll2},
	\begin{align*}
		l_{2,N}^{(1) }
		\leq   \,  c_{6,6} N ^{-1},
	\end{align*}
	where $c_{6,6}>0$.
	
	 In order to deal with the summande
	 $ l_{2,N}^{(2)}$, we will consider the following situations:
	 	\begin{itemize}
		\item When $ i-j \leq   \lfloor N ^{-\theta+1} \rfloor+1$.  The corresponding sum will be denoted by $ l_{2, N} ^{(2,1)}$.
		 		\item  When  $ i-j \geq \lfloor N ^{-\theta+1} \rfloor+2$. In this case we have
		$$t_{j}+ \varepsilon \leq t_{i} (\varepsilon).$$
		This implies that  $\left(\widetilde{\mathcal D}_{\varepsilon}v\right)\left(t_i, x\right)$  and $\left(\widetilde{\mathcal D}_{\varepsilon}v\right)^2\left(t_j, x\right)$  are independent of  $\sigma^{2}   (u(t_{i}(\varepsilon),x)$ and $\sigma  ^{2} (u(t_{j}(\varepsilon),x)$. The corresponding sum will de denoted by $ l_{2,N} ^{(2,2)}$. 
	\end{itemize}
Here,  	 $\lfloor x \rfloor$  denotes  the biggest integer less or equal than $x$. 	

Let us now estimate the quantities $ l_{2,N} ^{(2,1)}$ and $ l_{2, N} ^{(2,2)}$. By the Cauchy-Schwarz inequality, the Lipschitz condition of $\sigma$,   \eqref{eq bound}, and the hypercontractivity property   \cite[(26)]{OT25}, we have 
	\begin{eqnarray*}
		l_{2, N} ^{(2,1)}&\leq & \sum_{i,j=0; 0<i-j\leq \lfloor N ^{-\theta+1} \rfloor+1}^{N-1}   \left\{ 	\mathbb{E}\left[  \sigma^{8}   (u(t_{i}(\varepsilon),x))\right]\right\} ^{\frac{1}{4}}  \left\{ 	\mathbb{E} \left[ \sigma^{8}  \left(u(t_{j}(\varepsilon),x)\right)\right]\right\} ^{\frac{1}{4}} \\&&
		\ \ \ \ \  \cdot \left\{ 	\mathbb{E} \left[ \left( N ^{H-1}\left(\widetilde{\mathcal D}_{\varepsilon}v\right)^2\left(t_i, x\right)- \kappa^{2} N^{-1}\right)^{4}\right] \right\}^{\frac{1}{4}} \\
		&& \ \ \ \ \ \cdot \left\{ 	\mathbb{E}\left[  \left( N ^{H-1}\left(\widetilde{\mathcal D}_{\varepsilon}v\right)^2\left(t_j, x\right)- \kappa^{2} N^{-1}\right)^{4} \right]\right\} ^{\frac{1}{4}}\\
		&\leq & c_{6,7} \sum_{i,j=0; 0< i-j\leq \lfloor N ^{-\theta+1} \rfloor+1}^{N-1} \left( 	\mathbb{E} \left( N ^{H-1}\left(\widetilde{\mathcal D}_{\varepsilon}v\right)^2\left(t_i, x\right)- \kappa^{2} N^{-1}\right)^{2} \right) ^{\frac{1}{2}} \\
		&& \ \ \ \ \  \ \ \ \  \  \ \ \ \ \cdot  \left( 	\mathbb{E} \left( N ^{H-1}\left(\widetilde{\mathcal D}_{\varepsilon}v\right)^2\left(t_j, x\right)- \kappa^{2} N^{-1}\right)^{2} \right) ^{\frac{1}{2}},
	\end{eqnarray*}
	where $c_{6,7}>0$. Consequently,   	by using the inequality (\ref{2f-2}) in Lemma \ref{ll2} and the fact that $(\widetilde{\mathcal D}_{\varepsilon} v)(t, x)$ equals in distribution to 
$({\mathcal D}_{\varepsilon} v)(t, x)$, we obtain
	 	\begin{equation}\label{eq L2N21}
		l_{2, N} ^{(2,1)}\leq c_{6,8}    N ^{-\theta}. 
			\end{equation}
		for some constant $c_{6,8}>0$.
		
	For the summand   $l_{2,N} ^{(2,2)}$, since  $\left(\widetilde{\mathcal D}_{\varepsilon}v\right)\left(t_i, x\right)$ and $\left(\widetilde{\mathcal D}_{\varepsilon}v\right)^2\left(t_j, x\right)$ are independent  of the other random variables in the sum, \eqref{eq bound} and the Lipschitz continuity of   $\sigma$ imply that
	 	 	\begin{align*}
		l_{2,N} ^{(2,2)} 
		 =&\,  2\sum_{i,j=0; i-j\geq \lfloor N ^{-\theta+1} \rfloor+2}^{N-1} 
		\mathbb{E} \left[ \sigma^{2}   (u(t_{i}(\varepsilon),x))\sigma  ^{2} (u(t_{j}(\varepsilon),x))\right]\\	
		 &\ \ \ \ \ \cdot \mathbb{E}\left[\left( N ^{H-1}\left(\widetilde{\mathcal D}_{\varepsilon}v\right)^2\left(t_i, x\right)- \kappa^{2} N^{-1}\right)\left( N ^{H-1}\left(\widetilde{\mathcal D}_{\varepsilon}v\right)^2\left(t_j, x\right)- \kappa^{2} N^{-1}\right)\right] \\
		 \leq & \, c_{6,9} \sum_{i,j=0; i-j\geq \lfloor N ^{-\theta+1} \rfloor+2}^{N-1}   \mathbb{E}\left[\left|\left( N ^{H-1}\left(\widetilde{\mathcal D}_{\varepsilon}v\right)^2\left(t_i, x\right)- \kappa^{2} N^{-1}\right)\right. \right.\\
		 &\ \ \ \ \ \ \ \ \ \ \ \ \ \ \cdot  \left. \left. \left( N ^{H-1}\left(\widetilde{\mathcal D}_{\varepsilon}v\right)^2\left(t_j, x\right)- \kappa^{2} N^{-1}\right)\right|\right],
	\end{align*}
    where $c_{6,9}\in (0,\infty)$.
    
	Now, we use the estimate (\ref{14s-1})  in Lemma \ref{ll2} to get that
	\begin{equation}\label{27i-1}	
	\begin{split}
		l_{2,N} ^{(2,2)} \leq &\, c_{6,2}c_{6,9} N^{-2}  \left( \sum_{i,j=1; i-j\geq \lfloor N ^{-\theta+1} \rfloor+2}^{N-1} \left( \rho ^{2}_{ H}(i-j)+ i ^{H/2-1} \right)+1\right) \\
		 \leq &\,c_{6,2}c_{6,9} N^{-2} \left( \sum_{i,j=1; i-j\geq1}^{N-1} \left( \rho ^{2}_{ H}(i-j)+ i ^{H/2-1} \right)+1\right) \\
		 \leq & \, c_{6,2}c_{6,9} \left( N ^{-2} \sum _{j=1} ^{N} \sum _{i=j+1} ^{N} \rho ^{2}_{ H}(i-j) + N^{-1} \sum_{i=1} ^{N-1} i ^{H/2-1} +N^{-2}\right)  \\
		 =&\, c_{6,2}c_{6,9} \left(N^{-2} \sum _{j=1} ^{N} \sum_{k=1} ^{N-1-j} \rho ^{2}_{ H}(k) +N^{-1} \sum_{i=1} ^{N-1} i ^{H/2-1} + N^{-2}\right) \\
		 \leq &\,  c_{6,10}  \left( N^{-1} \sum_{k=1}^{N-1}\rho^{2} _{H}(k) + N^{-1} \sum_{i=1} ^{N-1} i ^{H/2-1} + N^{-2}\right) \\
		 \leq & \,c_{6,11} \left( N ^{-1} + N ^{  \frac H2-1}+N^{-2}\right),
		 	\end{split}
	\end{equation}
	where   $c_{6,10}, c_{6,11}\in (0,\infty)$,  and  the fact that $\sum_{k=1}^{\infty}\rho^{2} _{H}(k)<\infty$ is used in the last step.  
	
	Putting \eqref{eq L2N21} and \eqref{27i-1} together, 	we have  that by Jensen's inequality, 
	\begin{equation}\label{3f-2}
		  \mathbb{E} \left[\vert L_{2,N}\vert \right]  \leq c_{6,12} N ^{-\frac12\min\{\theta, 1-\frac{H}{2}\}},
	 \end{equation}
where $c_{6,12}>0$.

	By \eqref{eq Holder} and \eqref{eq tiN}, we have  
	\begin{equation}\label{3f-12}
	\begin{split}
		\mathbb{E}\left[ \vert L_{3, N}  \vert \right] \leq & \, \kappa^{2} \sum_{i=0} ^{N-1} \int_{t_{i}}^{t_{i+1}}   	  \left\|   \sigma (u(t_{i}(\varepsilon), x))-\sigma (u(s,x)) \right\|_{L^2} ds \\
		 \leq &\,    \kappa^{2}   L_{\sigma}\sum_{i=0} ^{N-1} \int_{t_{i}}^{t_{i+1}}  	 \left\|u(t_{i}(\varepsilon), x)-u(s,x)  \right\|_{L^2}ds \\
		 \leq & \,c_{6,13}    \sum_{i=0} ^{N-1} \int_{t_{i}}^{t_{i+1}}\vert t_{i}(\varepsilon)-s\vert ^{\frac{H}{2 }}ds\\
		 \leq & \, c_{6,13}     N^{-\frac{\theta H}{2}},
		\end{split}
	\end{equation}
	 where $c_{6,13}>0$.
	 
	By combining the estimates \eqref{eq bound B 1 N},  \eqref{3f-2},  and  \eqref{3f-12},  we  obtain the inequality \eqref{eq quad}. The proof is complete. 
    \end{proof}

Using the same approach as in   \cite[Section 3.3]{OT25}, one can obtain the asymptotic behavior of 
 the higher order variations of the temporal variation for the solution $\{u(t, x)\}_{t \geq 0}$. Here, 
  we state a result of the asymptotic behavior of the weighted temporal variation,
 by using the same approach as in the proof of \cite[Corollary 1.6]{WX2024}.
\begin{proposition}\label{coro variations}
        Assume that Condition \ref{cond A} holds. If $\varphi:\mathbb R
        \rightarrow\mathbb R$
        is Lipschitz continuous,  then for all    $x\in \mathbb R$,
        \begin{equation*}
        \begin{split}
        &\lim_{n\rightarrow\infty} \sum_{0\le j \le   2^n}\varphi\left(u\left(\frac{j}{2^n}, x\right)\right)\left|  u\left(\frac{j+1}{2^n}, x\right)-
        u\left(\frac{j}{2^n}, x\right) \right|^{\frac{2}{H}} \\
        =& \,    c_{6,14}   \int_{0}^{1}\varphi(u(t, x)) |\sigma(u(t, x))|^{\frac{2}{H}}dt,
                \end{split}
        \end{equation*}
        almost surely and in $L^2(\Omega, \mathbb P)$, where $c_{6,14}= \kappa^{\frac{2}{H}}
        \mathbb E\big[|\mathcal N|^{\frac{2}{H}}\big]$ with   $\mathcal N$ being a standard Gaussian random variable.
        \end{proposition}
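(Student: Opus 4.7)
The plan is to adapt the decomposition used in the proof of Proposition \ref{prop variation converge} from the quadratic to the critical $p$-variation exponent $p=2/H$ of the fBm $B^{H/2}$ that arises from the linear equation via Lemma \ref{lem decom}, following the scheme of \cite[Corollary 1.6]{WX2024}. Write $\varepsilon_n:=2^{-n}$, $t_j:=j\varepsilon_n$, and $t_j(\varepsilon_n):=t_j-\varepsilon_n^{\theta}$ with $\theta=2/(2+\vartheta)$ as in \eqref{eq const a}, and let $(\widetilde{\mathcal D}_{\varepsilon_n}v)(t_j,x)$ be the modified increment \eqref{eq Dv tilde}, which is equal in law to $v(t_{j+1},x)-v(t_j,x)$ and independent of $\mathcal F_{t_j(\varepsilon_n)}$. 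Denoting the left-hand side of the claim by $S_n$, I would write
\begin{align*}
S_n-c_{6,14}\int_0^1\varphi(u(t,x))|\sigma(u(t,x))|^{2/H}\,dt=R_{1,n}+R_{2,n}+R_{3,n},
\end{align*}
where $R_{1,n}$ replaces each $|u(t_{j+1},x)-u(t_j,x)|^{2/H}$ by $|\sigma(u(t_j(\varepsilon_n),x))|^{2/H}|(\widetilde{\mathcal D}_{\varepsilon_n}v)(t_j,x)|^{2/H}$ under the weight $\varphi(u(t_j,x))$; $R_{2,n}$ subtracts from the latter sum its conditional mean $c_{6,14}\varepsilon_n\sum_{j}\varphi(u(t_j,x))|\sigma(u(t_j(\varepsilon_n),x))|^{2/H}$; and $R_{3,n}$ is the Riemann-sum error against the target integral.

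For $R_{1,n}$ the mean-value bound $\bigl||a|^{2/H}-|b|^{2/H}\bigr|\le(2/H)\bigl(|a|^{2/H-1}+|b|^{2/H-1}\bigr)|a-b|$, H\"older's inequality, Corollary \ref{corollary a-c}, and the moment bounds of Theorem \ref{them existence} yield $\mathbb E[|R_{1,n}|]=O(\varepsilon_n^{\delta'})$ for some $\delta'>0$. For $R_{3,n}$, the Lipschitz property of $\varphi$, Lipschitz continuity of $y\mapsto|\sigma(y)|^{2/H}$ on compact sets (with moment truncation via \eqref{eq bound}), and the H\"older estimate \eqref{eq Holder} produce $L^1$-convergence of the Riemann sum to the integral.

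The heart of the argument is the $L^2$-control of $R_{2,n}$. After Lipschitz-replacing $\varphi(u(t_j,x))$ by $\varphi(u(t_j(\varepsilon_n),x))$ at a cost of order $\varepsilon_n^{\theta H/2}$ per term, the factor $(\widetilde{\mathcal D}_{\varepsilon_n}v)(t_j,x)$ is independent of the $\mathcal F_{t_j(\varepsilon_n)}$-measurable weight. Expanding $\mathbb E[R_{2,n}^2]$ and splitting the double sum over $(i,j)$ into the three regimes $i=j$, $0<|i-j|\le\lfloor\varepsilon_n^{1-\theta}\rfloor+1$, and $|i-j|\ge\lfloor\varepsilon_n^{1-\theta}\rfloor+2$ as in Proposition \ref{prop variation converge}, the Gaussian hypercontractivity bound \cite[(26)]{OT25} converts the $(2/H)$-th moments into second-order ones: the diagonal contributes $O(\varepsilon_n)$ and the near-diagonal $O(\varepsilon_n^\theta)$. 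In the far-diagonal regime the two $v$-increments are independent of the weights, and writing $v=\kappa B^{H/2}+T$ via Lemma \ref{lem decom} reduces the cross-moment to a leading term proportional to $\rho_{H/2}(i-j)^2$ plus a $T$-contribution of order $\varepsilon_n^2/t_j^{2-H}$; summability of $\sum_k\rho_{H/2}(k)^2$ and \eqref{eq T moment} then give the far-diagonal an $o(1)$ bound. The normalizing constant $c_{6,14}=\kappa^{2/H}\mathbb E[|\mathcal N|^{2/H}]$ arises via Gaussian scaling $\mathbb E[|\kappa B^{H/2}(\varepsilon_n)|^{2/H}]=\kappa^{2/H}\varepsilon_n\,\mathbb E[|\mathcal N|^{2/H}]$.

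The main obstacle is to obtain all of these bounds with the correct normalization at the critical power $p=2/H$: this exponent is dictated by the self-similarity of $B^{H/2}$ and admits no slack, so Gaussian hypercontractivity is essential to reduce $(2/H)$-th moments back to the second-moment covariance estimates of Lemma \ref{ll2}. The initial-value perturbation $[p_{t+\varepsilon_n}(\cdot)-p_t(\cdot)]*u_0(x)$ must be handled separately for the very first few indices near $t=0$ via \eqref{eq J00}--\eqref{eq J01}, and the almost-sure statement then follows from the $L^2$-estimate along the dyadic sequence by Borel-Cantelli combined with a monotonicity argument, in the spirit of \cite[Corollary 1.6]{WX2024}.
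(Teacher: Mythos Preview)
Your proposal is sound and would go through; it essentially reruns the machinery of Proposition \ref{prop variation converge} at the critical exponent $2/H$, using the modified increments $\widetilde{\mathcal D}_{\varepsilon_n}v$ and the three-regime splitting from \cite{OT25}. The paper, however, does not redo this analysis: it simply invokes \cite[Corollary~1.6]{WX2024}, whose route is more direct. That argument applies Theorem \ref{thm main} (not Corollary \ref{corollary a-c}) to replace $(\mathcal D_{\varepsilon_n}u)(t_j,x)$ by $\sigma(u(t_j,x))(\mathcal D_{\varepsilon_n}v)(t_j,x)$, then uses the pinned-string decomposition of Lemma \ref{lem decom} to write $(\mathcal D_{\varepsilon_n}v)(t_j,x)=\kappa\,[B^{H/2}(t_{j+1})-B^{H/2}(t_j)]-[T(t_{j+1})-T(t_j)]$; since $T$ is $C^\infty$ on $(0,\infty)$ with the bound \eqref{eq T moment}, its $2/H$-variation vanishes, and the result reduces to the classical weighted $2/H$-variation of fBm $B^{H/2}$. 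This outsources the hard variational step to the linear equation and avoids the second-moment bookkeeping you carry out for $R_{2,n}$; your approach, by contrast, is self-contained and delivers an explicit polynomial rate in $\varepsilon_n$, at the cost of reproducing the \cite{OT25} estimates at a different power.

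Two small points on your sketch: your $R_{1,n}$ and $R_{3,n}$ bounds are stated in $L^1$ while the proposition claims $L^2$ convergence, so you should upgrade those via higher-moment versions of the same inequalities (Theorem \ref{them existence} gives all $L^p$ bounds); and the ``monotonicity argument'' you mention for the almost-sure part is not needed here---since you work along the fixed dyadic sequence $\varepsilon_n=2^{-n}$ and obtain geometric rates, Borel--Cantelli alone suffices.
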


  Propositions \ref{prop variation converge} and \ref{coro variations} characterize the temporal quadratic and higher-order variations of solutions to the following parametrized stochastic heat equation:
 \begin{equation}\label{eq parametrized SFBE}
    \frac{\partial  u_\theta (t, x)}{\partial t} =  \theta \Delta u_\theta(t, x)  +\sigma(u_\theta (t, x)) \dot W(t, x), \ \ \ t>0, x\in \mathbb R,
    \end{equation}
    with vanishing initial value  $u_\theta(0, x)=0$ for every $x\in \mathbb R$.  
Consequently, one can then construct an estimator for the drift parameter $\theta$ using discrete-time observations of $u_\theta$ at an arbitrary spatial point. The procedure follows standard arguments; see \cite{GT2023, LSY25,  OT25, PT2007}  for details.
 
     \appendix
\section{Some lemmas}


      \begin{lemma}\label{lem int p}$($\cite[Lemma 3.1]{HHLNT2017}$)$ For any $t>0$ and $\beta\in (0,1)$, we have
    $$
    \int_{\mathbb R}  \int_{\mathbb R} |p_t(x+h)-p_t(x)|^2 |h|^{-1-2\beta}dh  dx   = c  t^{-\frac12-\beta},
    $$
where $c$ is a positive constant.
     \end{lemma}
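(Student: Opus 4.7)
The cleanest route is via Fourier analysis, exploiting the fact that $\widehat{p_t}(\xi) = e^{-t\xi^2}$. First I would apply Plancherel's identity in the spatial variable $x$ (for each fixed $h$) to write
\begin{equation*}
\int_{\mathbb R} |p_t(x+h)-p_t(x)|^2 \, dx = \frac{1}{2\pi}\int_{\mathbb R} |e^{i\xi h}-1|^2 \, e^{-2t\xi^2} \, d\xi,
\end{equation*}
since the Fourier transform of $p_t(\cdot+h)-p_t(\cdot)$ is $(e^{i\xi h}-1)e^{-t\xi^2}$. This reduces the problem to evaluating a double integral over $(\xi,h)$.

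Next, I would apply Fubini's theorem (everything is nonnegative, so this is automatic) and carry out the $h$-integration first:
\begin{equation*}
\int_{\mathbb R} |e^{i\xi h}-1|^2 \, |h|^{-1-2\beta} \, dh = C_\beta \, |\xi|^{2\beta},
\end{equation*}
where the constant $C_\beta := \int_{\mathbb R} |e^{iu}-1|^2 |u|^{-1-2\beta}\, du$ is finite precisely when $\beta \in (0,1)$ (the integrand behaves like $|u|^{1-2\beta}$ as $u\to 0$ and like $|u|^{-1-2\beta}$ as $|u|\to\infty$). The identity follows by the substitution $u = \xi h$ (handling the sign of $\xi$ with $|\xi|$). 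This step is where the hypothesis $\beta \in (0,1)$ gets used and is essentially the only substantive computation.

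Finally, combining the two steps and performing the Gaussian-type $\xi$-integral gives
\begin{equation*}
\int_{\mathbb R}\int_{\mathbb R} |p_t(x+h)-p_t(x)|^2 |h|^{-1-2\beta}\, dh \, dx = \frac{C_\beta}{2\pi}\int_{\mathbb R} |\xi|^{2\beta} e^{-2t\xi^2}\, d\xi = c\, t^{-\frac12-\beta},
\end{equation*}
where the last equality comes from the substitution $\eta = \sqrt{2t}\,\xi$ and the formula $\int_{\mathbb R} |\eta|^{2\beta} e^{-\eta^2}\, d\eta = \Gamma(\beta+\tfrac12)$, yielding the explicit constant $c = \frac{C_\beta \Gamma(\beta+\frac12)}{2\pi (2)^{\beta+1/2}}$.

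I do not anticipate any real obstacle: the only delicate point is justifying the convergence of $C_\beta$, which requires exactly the assumption $0<\beta<1$. An alternative (and essentially equivalent) route would be to use the scaling identity $p_t(x)=t^{-1/2}p_1(t^{-1/2}x)$ directly via the change of variables $x = t^{1/2}\tilde x$, $h = t^{1/2}\tilde h$, which immediately produces the factor $t^{-1/2-\beta}$ times a fixed finite integral in the scaled variables; this gives the desired scaling without computing the constant explicitly, and the finiteness of the $t=1$ integral can be verified by the same Fourier computation above.
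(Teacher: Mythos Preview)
Your proof is correct. The paper does not reproduce a proof of this lemma but simply cites \cite[Lemma~3.1]{HHLNT2017}; your Plancherel/Fourier argument is the standard one (and is exactly the technique the paper itself uses repeatedly in Section~\ref{sec lemma}, e.g.\ in deriving \eqref{eq A 3101} and \eqref{eq A 112}), so there is nothing to contrast.
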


    \begin{lemma}\label{lem green 2} For any $H\in \left(\frac14, \frac12\right)$ and   $\theta\in \left(0, \frac12- H\right)$, we have
    $$
    \int_{\mathbb R}   \int_{\mathbb R}    \Big|e^{-(x+h)^2}-e^{-x^2}\Big|^2  |h|^{2H-2} |x|^{2\theta} dhdx <\infty.
    $$
    \end{lemma}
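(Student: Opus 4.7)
\textbf{Proof plan for Lemma \ref{lem green 2}.} The plan is to split the inner integral at $|h|=1$ and control the two regions separately, exploiting smoothness of $e^{-x^2}$ near $h=0$ and rapid decay for $|h|$ large. Write
\begin{equation*}
I := \int_{\mathbb R}\int_{\mathbb R} \left|e^{-(x+h)^2}-e^{-x^2}\right|^2 |h|^{2H-2} |x|^{2\theta}\, dh\, dx = I_{\text{near}} + I_{\text{far}},
\end{equation*}
where $I_{\text{near}}$ restricts to $|h|\le 1$ and $I_{\text{far}}$ to $|h|>1$. Note that $2H-2\in(-\tfrac32,-1)$, so the weight $|h|^{2H-2}$ is integrable at infinity and singular at the origin.

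For $I_{\text{near}}$, I would write $e^{-(x+h)^2}-e^{-x^2}=-2\int_0^1(x+sh)e^{-(x+sh)^2}\,ds\cdot h$ and apply the Cauchy--Schwarz (or Jensen) inequality to obtain
\begin{equation*}
\left|e^{-(x+h)^2}-e^{-x^2}\right|^2 \le 4 h^2 \int_0^1 (x+sh)^2 e^{-2(x+sh)^2}\,ds.
\end{equation*}
Then $h^2|h|^{2H-2}=|h|^{2H}$ is integrable on $[-1,1]$ since $2H>0$. After interchanging the order of integration, it suffices to bound $\int_{\mathbb R}(x+sh)^2 e^{-2(x+sh)^2} |x|^{2\theta}\, dx$ uniformly in $s\in[0,1]$ and $|h|\le 1$; the substitution $y=x+sh$ together with $|y-sh|^{2\theta}\le C(|y|^{2\theta}+1)$ reduces this to Gaussian moments, which are finite.

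For $I_{\text{far}}$, I would use the trivial bound $|e^{-(x+h)^2}-e^{-x^2}|^2\le 2e^{-2(x+h)^2}+2e^{-2x^2}$. The $e^{-2x^2}$ piece factorizes into $\int e^{-2x^2}|x|^{2\theta}dx\cdot \int_{|h|>1}|h|^{2H-2}dh$, both finite (the $h$-integral because $2H-2<-1$). For the $e^{-2(x+h)^2}$ piece, substitute $y=x+h$ and bound $|y-h|^{2\theta}\le C(|y|^{2\theta}+|h|^{2\theta})$; this produces two product integrals, the delicate one being
\begin{equation*}
\int_{\mathbb R}e^{-2y^2}dy\cdot \int_{|h|>1}|h|^{2H+2\theta-2}dh.
\end{equation*}
The main (and essentially only) obstacle is checking that this last $h$-integral converges, which requires $2H+2\theta-2<-1$, i.e., $\theta<\tfrac12-H$. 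This is precisely the hypothesis, so the estimate closes and $I<\infty$.
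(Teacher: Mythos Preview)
Your argument is correct. The splitting at $|h|=1$, the mean-value estimate for $I_{\text{near}}$, and the crude bound for $I_{\text{far}}$ all go through as written; in particular, the only place the hypothesis $\theta<\tfrac12-H$ is needed is exactly the convergence of $\int_{|h|>1}|h|^{2H+2\theta-2}\,dh$, which you identified.

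The paper's proof is organized differently: rather than splitting in $h$, it quotes \cite[Lemma~2.10]{HW2022} to obtain the pointwise bound
\[
\int_{\mathbb R}\big|e^{-(x+h)^2}-e^{-x^2}\big|^2 |h|^{2H-2}\,dh \le c_H\big(1\wedge |x|^{2H-2}\big),
\]
and then simply observes that $\int_{\mathbb R}|x|^{2\theta}(1\wedge|x|^{2H-2})\,dx<\infty$ precisely when $\theta<\tfrac12-H$. This is shorter but relies on an external reference. Your proof is fully self-contained and makes explicit why the threshold $\tfrac12-H$ arises; the trade-off is a slightly longer computation. Both routes are equally valid here.
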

         \begin{proof} By \cite[Lemma 2.10]{HW2022}, we know that for any $H\in \left(\frac14, \frac12\right)$, there exists a positive constant $c_H$ satisfying that
     $$
     \int_{\mathbb R}     \Big|e^{-(x+h)^2}-e^{-x^2}\Big|^2  |h|^{2H-2}  dh\le c_H \left(1\wedge  |x|^{2H-2}  \right).
    $$
          Consequently,   for any $\theta\in \left(0, \frac12-H\right)$,
    \begin{align*}
    &\int_{\mathbb R}dx   |x|^{2\theta}    \int_{\mathbb R} dh     \Big|e^{-(x+h)^2}-e^{-x^2}\Big|^2  |h|^{2H-2} \\
    \le &\,  c_H  \int_{\mathbb R} |x|^{2\theta}    \left(1\wedge  |x|^{2H-2}  \right)  dx<\infty.
     \end{align*}
      The proof is complete.
    \end{proof}

The next lemma is an isoperimetric inequality for general Gaussian processes.

\begin{lemma}\label{Lem:LT}$($\cite[p.302]{LT}$)$
There is a universal constant $K_0$ such that the following statement holds.
Let $S$ be a bounded set and $\{X(s), s \in S\}$ be a separable Gaussian process.
Let $D = \sup\{ d_X(s, t) : s, t \in S \}$ be the diameter of $S$ in metric $d_X$.
Then for any $u > 0$,
\begin{equation*}
\mathbb P\bigg\{ \sup_{s, t \in S} |X(s) - X(t)| \ge 
K_0\Big( u + \int_0^D \sqrt{\log N(S, d_X, \eps)} d\eps \Big) \bigg\} 
\le \exp\left(-\frac{u^2}{D^2}\right).
\end{equation*}
\end{lemma}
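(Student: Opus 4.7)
The plan is to combine two classical pillars of Gaussian process theory: Dudley's entropy bound for the expected supremum, together with the Borell--Tsirelson--Ibragimov--Sudakov (Borell--TIS) concentration inequality. The target is the tail of $Z := \sup_{s,t\in S}|X(s)-X(t)|$, which is a Lipschitz functional of the underlying Gaussian vector, and whose expectation is controlled by metric entropy.

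First, I would apply Dudley's theorem to the centered separable Gaussian process $\{X(s)-X(t):(s,t)\in S\times S\}$. Its canonical pseudometric $\widetilde{d}$ satisfies $\widetilde{d}((s_1,t_1),(s_2,t_2))\le d_X(s_1,s_2)+d_X(t_1,t_2)$, so covering numbers in $\widetilde{d}$ are controlled by $N(S,d_X,\eps)^2$, and the diameter in $\widetilde{d}$ is at most $2D$. Dudley's entropy bound then gives
\begin{equation*}
\mathbb E[Z]\,\le\,K_1\int_0^{D}\sqrt{\log N(S,d_X,\eps)}\,d\eps
\end{equation*}
for some absolute constant $K_1$, after a routine change of variables to absorb the factor of $2$ coming from $\widetilde{d}$.

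Second, I would invoke Gaussian concentration. The functional $Z$ is $1$-Lipschitz on $(\R^S,\|\cdot\|_\infty)$ after embedding the Gaussian field into an abstract Wiener space, and the supremum of the variances of $X(s)-X(t)$ over $S\times S$ is bounded by $D^2$. The Borell--TIS inequality therefore yields
\begin{equation*}
\mathbb P\bigl(Z\ge \mathbb E[Z]+v\bigr)\,\le\,\exp\!\left(-\frac{v^2}{2D^2}\right),\qquad v>0.
\end{equation*}
Setting $v=u/\sqrt{2}$ and combining with the Dudley bound produces, for any $u>0$,
\begin{equation*}
\mathbb P\!\left(Z\ge K_1\int_0^{D}\sqrt{\log N(S,d_X,\eps)}\,d\eps+\frac{u}{\sqrt{2}}\right)\le \exp\!\left(-\frac{u^2}{D^2}\right),
\end{equation*}
and one finishes by choosing $K_0$ large enough that both $K_1$ and $\sqrt{2}$ are dominated by $K_0$.

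The main obstacle is not conceptual but bookkeeping: one must keep track of absolute constants through the doubling inherent to working on $S\times S$, and verify that separability of $\{X(s)\}$ is enough to pass to separability of the difference process so that both Dudley's theorem and Borell--TIS apply rigorously. A secondary point is the reduction from the two-sided supremum $\sup_{s,t}|X(s)-X(t)|$ to a centered supremum amenable to concentration, which can be handled either via the above embedding on $S\times S$ or by passing through $\sup_s X(s)-\inf_s X(s)$ with a union bound; either route leads to the same form of inequality modulo constants.
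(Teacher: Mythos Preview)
The paper does not prove this lemma at all; it is stated in the appendix and attributed directly to \cite[p.~302]{LT} without argument. Your approach---Dudley's entropy bound for the expected oscillation combined with the Borell--TIS concentration inequality---is precisely the standard route to this inequality and is the argument behind the cited result in Ledoux--Talagrand, so there is nothing substantively different to compare.

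One small bookkeeping slip: with Borell--TIS giving $\exp(-v^{2}/(2D^{2}))$, you want to substitute $v=\sqrt{2}\,u$ (not $v=u/\sqrt{2}$) to land on the exponent $-u^{2}/D^{2}$; your final sentence about absorbing $\sqrt{2}$ into $K_{0}$ shows you had the right constant in mind, only the intermediate substitution went the wrong way. This does not affect the validity of the argument.
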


\begin{lemma}$($\cite[Theorem 2.4]{Tal94}\label{Tal94}$)$
Let $\{ X(t), t \in S \}$ be a mean-zero continuous Gaussian process.
Denote
$\sigma^2 := \sup_{t \in S}\|X(t)\|_{L^2}^2$  and  
$d_X(s, t) := \|X(s) - X(t)\|_{L^2}$.
Assume that  there exist   some constants $M > \sigma$,   $p > 0$, and   $0 < \eps_0 \le \sigma$  such  that 
\begin{equation*}
N(S, d_X, \eps) \le (M/\eps)^p \quad \text{for all } \eps < \eps_0.
\end{equation*}
Then, it holds that for any  $u > \sigma^2[(1+\sqrt{p})/\eps_0]$,  
\begin{equation*}
\mathbb P\bigg\{ \sup_{t \in S} X(t) \ge u \bigg\} 
\le \left( \frac{KMu}{\sqrt{p}\,\sigma^2} \right)^p \Phi\left( \frac{u}{\sigma} \right),
\end{equation*}
where $\Phi(x) = (2\pi)^{-1/2} \int_x^\infty e^{-y^2/2} \, dy$
and $K$ is a universal constant.
\end{lemma}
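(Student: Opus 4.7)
The plan is to combine the entropy hypothesis $N(S, d_X, \eps) \le (M/\eps)^p$ with a two-scale decomposition of $\sup_{t\in S} X(t)$: a chaining argument (of Fernique--Talagrand type) that controls the oscillation of the process inside small balls, together with a union bound on a coarse $\eta$-net that yields a genuine Gaussian tail. The target bound $\left(KMu/(\sqrt p \sigma^2)\right)^p \Phi(u/\sigma)$ is then obtained by choosing $\eta$ so that the chaining correction is of order $\sigma^2/u$, which is exactly the regime where the shift in the argument of $\Phi$ is negligible.

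First, fix a parameter $\eta \in (0,\eps_0]$ to be optimized at the end, and let $N_0\subset S$ be a minimal $\eta$-net, so that $|N_0| \le (M/\eta)^p$. For each $t\in S$ let $\pi_0(t)\in N_0$ be a closest point. Applying Lemma A.3 to each $d_X$-ball of radius $\eta$ (or equivalently running a dyadic chaining argument on nested nets $N_k$ of radii $\eta\cdot 2^{-k}$ with $|N_k|\le (M\cdot 2^k/\eta)^p$ and calibrating each link by the Gaussian tail), one obtains the oscillation estimate
$$
\mathbb P\!\left\{\sup_{t\in S} |X(t)-X(\pi_0(t))|\ge K_1\int_0^{\eta}\sqrt{\log N(S,d_X,\eps)}\,d\eps+r\right\}\le \exp\!\left(-\frac{r^2}{K_1\eta^2}\right).
$$
Under the entropy hypothesis, the deterministic integral is bounded by $K_2\,\eta\sqrt{p\log(M/\eta)}$. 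Combined with the union bound and the standard Gaussian tail estimate $\mathbb P(X(s)\ge a)\le \Phi(a/\sigma)$ on the finite net,
$$
\mathbb P\!\left\{\max_{s\in N_0} X(s)\ge u-\rho\right\} \le (M/\eta)^p\,\Phi\!\left(\frac{u-\rho}{\sigma}\right),
$$
where $\rho = K_2\eta\sqrt{p\log(M/\eta)} + r$. Choosing $r$ of the same order as $K_2\eta\sqrt{p\log(M/\eta)}$ and adding the two error events yields the bound
$$
\mathbb P\!\left\{\sup_{t\in S} X(t)\ge u\right\} \le (M/\eta)^p\,\Phi\!\left(\frac{u-\rho}{\sigma}\right) + \exp\!\left(-\frac{r^2}{K_1\eta^2}\right).
$$

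The final step is the optimization in $\eta$. One picks $\eta$ so that $\rho = C\sigma^2/u$; then a short computation using $\Phi(x)\le\tfrac{1}{x\sqrt{2\pi}}e^{-x^2/2}$ shows that $\Phi((u-\rho)/\sigma)\le e^{C}\Phi(u/\sigma)$, while the exponential error term is dominated by $\Phi(u/\sigma)$ in the regime under consideration. Solving $\eta\sqrt{p\log(M/\eta)}\asymp \sigma^2/u$ gives $\eta \asymp \sigma^2\sqrt p/u$ (up to a logarithmic factor absorbed in the universal constant), which is admissible exactly under the hypothesis $u>\sigma^2(1+\sqrt p)/\eps_0$. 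Substituting into $(M/\eta)^p$ produces the prefactor $(KMu/(\sqrt p\,\sigma^2))^p$ claimed in the lemma.

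The main obstacle is the calibration producing the sharp $\sqrt p$ in the denominator of the prefactor: a naive application of Borell's inequality together with Dudley's entropy bound on $\mathbb E\sup_t X(t)$ only yields a cruder factor of the form $(KM/\sigma)^p(\log)^{p/2}$. Getting the precise form $(KMu/(\sqrt p\,\sigma^2))^p$ requires Talagrand's observation that the optimal scale $\eta$ for the net depends jointly on $u$, $\sigma$, and $p$, and that one must perform the chaining only down to this scale rather than to zero; below $\eta$, the Gaussian tail on $N_0$ is already sharper than any further polynomial gain from refinement. Bookkeeping the dependence of the universal constants through the chaining levels so that nothing worse than a single factor $K$ multiplies the final prefactor is the technical heart of the argument.
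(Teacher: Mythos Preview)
The paper does not supply its own proof of this lemma: it is stated in the Appendix purely as a citation of \cite[Theorem 2.4]{Tal94}, so there is no ``paper's proof'' to compare against. Your proposal is therefore an attempt to reconstruct Talagrand's original argument, not to match anything in the present manuscript.

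As a reconstruction, your outline is on the right track and captures the essential two-scale idea: control the oscillation below a scale $\eta$ by chaining, control the coarse maximum over an $\eta$-net by a union bound plus the one-dimensional Gaussian tail, and optimize $\eta$ so that the chaining correction $\rho$ is of order $\sigma^2/u$. You also correctly identify the crucial point that the sharp $\sqrt{p}$ in the prefactor is what distinguishes this from a naive Dudley/Borell argument, and that it comes from the $u$-dependent choice of $\eta$. One place where your sketch is a bit loose is the treatment of the logarithmic factor when solving $\eta\sqrt{p\log(M/\eta)}\asymp\sigma^2/u$ for $\eta$: Talagrand's actual argument avoids iterating this implicit relation by organizing the chaining slightly differently (working directly with increments calibrated to the target level $u$ rather than first bounding an entropy integral and then inverting), which is what keeps the final constant clean. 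Your version would still give a correct inequality with a universal $K$, but tracking the constants carefully through the step ``up to a logarithmic factor absorbed in the universal constant'' is exactly where a written-out proof needs more care than you indicate.
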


The following Gaussian estimate is standard:
\begin{equation}\label{stdGaussian}
\frac{1}{2\sqrt{2\pi}x} e^{-x^2/2} \le \Phi(x) \le 
\frac{1}{\sqrt{2\pi}} e^{-x^2/2} \quad \text{for all } x \ge 1.
\end{equation}

     \vskip0.3cm


 \vskip0.3cm
 
\noindent{\bf Acknowledgments}:  We thank Prof. Fuqing Gao for many fruitful discussions. The research of R. Wang is partially supported by the NSF of Hubei Province (Grant No. 2024AFB683). The research of Y. Xiao is partially supported by the NSF grant DMS-2153846.

    \vskip0.5cm


\vskip0.8cm


\begin{thebibliography}{abc}
     \bibitem{BJQ2015}
    Balan, R.,  Jolis,  M., Quer-Sardanyons, L.: SPDEs with affine multiplicative fractional noise in space with index $\frac {1}{4}< H<\frac {1}{2}$. 
     {\it Electron. J. Probab.} \textbf{20}(54), 1--36 (2015)

    \bibitem{BJQ2016}
    Balan, R., Jolis, M., Quer-Sardanyons, L.: SPDEs with rough noise in space: H\"older continuity of the  solution.  {\it Statist. Probab. Lett.} 
    \textbf{119}, 310--316 (2016)

    \bibitem{CD15}
  Chen, L.,   Dalang,  R. C.:  Moments and growth indices for the nonlinear stochastic heat equation with rough initial
conditions. {\it Ann. Probab.} {\bf 43}(6), 3006--3051  (2015)
 
\bibitem{CHKK19}
 Chen, L.,  Huang, J.,   Khoshnevisan, D.,  Kim, K.: Dense blowup for parabolic SPDEs. {\it Electron. J. Probab.} {\bf 24},  1--33 (2019)

    \bibitem{CK2019}
Chen, L.,     Kim, K.:  Nonlinear stochastic heat equation driven by spatially colored noise: moments and intermittency.
 {\it Acta Math. Sci. Ser. B.} {\bf39}(3), 645--668  (2019)




    \bibitem{DaPrato2014} Da Prato, G., Zabczyk, J.: Stochastic equations in infinite dimensions.  Second edition. Cambridge University Press, Cambridge (2014)


    \bibitem{Dalang1999}
    Dalang, R. C.: Extending the martingale measure stochastic integral with applications to spatially homogeneous s.p.d.e.'s.
    {\it Electron. J. Probab.}  \textbf{4}(6), 1--29 (1999)
    
\bibitem{DMX17}
  Dalang,  R. C.,  Mueller,  C.,  Xiao, X.: Polarity of points for Gaussian random fields. {\it Ann. Probab.} 
{\bf 45}(6), 4700--4751  (2017)

\bibitem{DNP2025}
Dalang, R. C., Nualart, D.,  Pu, F.: Sharp upper bounds on hitting probabilities for the solution to the stochastic heat equation on the line. Preprint available at    \href{https://doi.org/10.48550/arXiv.2508.11859}{arXiv:2508.11859} (2025)


    \bibitem{DQ} Dalang, R. C.,  Quer-Sardanyons, L.: Stochastic integrals for spde's: A comparison. {\it Expo. Math.}  {\bf 29}(1), 67--109 (2011)

    \bibitem{Das2022}  Das,   S.: Temporal increments of the KPZ equation with general initial data.   {\it Electron. J. Probab.}  {\bf 29},     1--28 (2024)

    \bibitem{FKM2015} Foondun,  M.,    Khoshnevisan, D.,   Mahboubi, P.:  Analysis of the gradient of the solution to a stochastic heat equation via fractional Brownian motion. {\it Stoch PDE: Anal. Comp.}    {\bf 3}, 133--158  (2015)



     \bibitem{GT2023}
      Gamain, J., Tudor, C. A.:  Exact variation and drift parameter estimation for the nonlinear fractional stochastic heat equation. {\it Jpn. J. Stat. Data Sci.}  {\bf6}, 381--406 (2023)
      
\bibitem{HP15}
    Hairer, M.,   Pardoux, \'E.: A Wong–Zakai theorem for stochastic PDEs. {\it J. Math.
Soc. Japan} {\bf 67}(4), 1551--1604 (2015)

    \bibitem{HRZ2021} Hao, Z.,  R\"ockner, M.,  Zhang, X.-C.: Euler scheme for density dependent stochastic differential equations.   {\it J. Differential Equations}  {\bf 274}, 996--1014 (2021)




    \bibitem{HSWX20}
      Herrell, R.,   Song,  R.,  Wu, D.,    Xiao, Y.: Sharp space-time regularity
    of the solution to a stochastic heat equation driven by a fractional-colored noise.
    {\it Stoch. Anal. Appl.} {\bf 38}, 747--768 (2020)

\bibitem{HL25}
  Hu, J.,  Lee C. Y.:
On the spatio-temporal increments of nonlinear parabolic SPDEs and the open KPZ equation.   Preprint available at  
  \href{https://arxiv.org/pdf/2508.05032}{arXiv:2508.05032} (2025) 

    \bibitem{hu19}
     Hu, Y.: Some recent progress on stochastic heat equations.
    {\it Acta Math. Sci. Ser. B}   \textbf{39}(3),  874--914 (2019)


    \bibitem{HHLNT2017}
    Hu, Y., Huang, J.,  L\^e, K.,  Nualart, D., Tindel, S.: Stochastic heat equation with rough dependence in space.  {\it Ann. Probab.}    \textbf{45}(6),   4561--616  (2017)

    \bibitem{HHLNT2018}
     Hu, Y.,  Huang, J.,  L\^e, K.,  Nualart, D., Tindel,  S.: Parabolic Anderson model with rough dependence in space. Computation and combinatorics in dynamics, stochastics and control, 477--498. Abel Symp., 13, Springer, Cham (2018)



    \bibitem{HW2022}
    Hu, Y.,  Wang, X.: Stochastic heat equation with general rough noise. {\it Ann. Inst. Henri Poincar\'e Probab. Stat.}  \textbf{58}(1),  379--423 (2022)

    \bibitem{HK2017}
 Huang,  J.,   Khoshnevisan,  D.:  On the multifractal local behavior of parabolic stochastic PDEs. {\it Electron.
Commun. Probab.} {\bf 22},  1--11 (2017)

    \bibitem{KT2019}
   Khalil,    Z. M.,    Tudor,   C. A.:  On the distribution and $q$-variation of the solution to the heat equation with fractional
    Laplacian.  {\it Probab.  Math. Statist.} {\bf 39}(2), 315--335 (2019)


 \bibitem{K2014}
   Khoshnevisan, D.: {\it Analysis of stochastic partial differential equations}.  American Mathematical Soc.  (2014)

\bibitem{KKM23}
Khoshnevisan, D.,  Kim, K.,   Mueller, C.: Small-ball constants, and exceptional flat points of SPDEs. {\it Electron. J. Probab.}  {\bf 29}, 1--31 (2024) 

     \bibitem{KS2023}
      Khoshnevisan,    D.,  Sanz-Sol\'e, M.:  Optimal regularity of SPDEs with additive noise.  {\it Electron. J. Probab.}  {\bf 28}, 1--31  (2023)


    \bibitem{KSXZ2013}
       Khoshnevisan, D.,   Swanson,  J.,    Xiao,  Y.,     Zhang, L.: Weak existence of a solution to
    a differential equation driven by a very rough fBm.  Preprint available at
     \href{https://arxiv.org/pdf/1309.3613.pdf}{arXiv:1309.3613v2}  (2014) 
     
     
     \bibitem{LT}
 Ledoux, M.,   Talagrand M.: {\it Probability in Banach spaces}. Springer (1991)

\bibitem{LX2023}
Lee, C. Y.,   Xiao, Y.: Chung-type law of the iterated logarithm and exact moduli of continuity for a class of anisotropic Gaussian random fields. {\it Bernoulli} {\bf 29}(1), 523--550 (2023)



    \bibitem{LS01}
      Li,  W. V.,    Shao, Q.-M.:  Gaussian processes: inequalities, small ball probabilities and applications. In {\it Stochastic Processes:
    Theory and Methods. Handbook of Statistics}, 19, (C.R. Rao and D. Shanbhag, editors),   533--597, North-Holland (2001)
    
\bibitem{LSY25}
Li, Y., Shu, H.,  Yan, L.:    Exact temporal variation for fractional stochastic heat equation driven by space-time white noise.  Preprint available at  
  \href{https://doi.org/10.48550/arXiv.2410.20426}{arXiv:2410.20426} (2024)
  
\bibitem{LQW25}Liu, C., Qian, B.,  Wang, R.:
Growth rates for the  H\"older coefficients of   the linear stochastic fractional  heat equation  with  rough dependence in space.   Preprint available at  
  \href{https://arxiv.org/abs/2507.22379v1}{arXiv:2507.22379} (2025)  

\bibitem{LW25} Liu, C., Wang, R.:
Small-ball probabilities for the stochastic fractional  heat equation  with  rough dependence in space.  Preprint.	 



    \bibitem{LM2022}
      Liu, J.,   Mao, L.:  Nonlinear fractional stochastic heat equation driven by Gaussian noise rough in space. {\it Bull. Sci. Math.}   {\bf181},    103207 (2022)

 
    \bibitem{LHW2022}
    Liu, S., Hu, Y.,  Wang, X.:   Nonlinear stochastic wave equation driven by rough noise. {\it J. Differential Equations} {\bf 331}, 99--161  (2022)


 
   

 \bibitem{MR1995}
   Monrad,  D., Rootz\'en, H.:  Small values of Gaussian processes and functional laws of the iterated logarithm.
 {\it Probab. Theory Relat. Fields}   {\bf101}, 173--192  (1995)

    \bibitem{MT02}
     Mueller,  C., Tribe, R.:  Hitting probabilities of a random string. {\it Electronic J. Probab.} {\bf 7},   1--29 (2002)


		\bibitem{NP-book}
		 Nourdin, I.,    Peccati, G.: {\it  Normal Approximations with
			Malliavin Calculus From Stein's Method to Universality}. Cambridge
		University Press (2012)
		

\bibitem{OT25}
Olivera, C., Tudor, C.A.: Temporal quadratic and higher order variation for the nonlinear  stochastic heat equation  and applications to parameter estimation.  To appear in {\it Ann. Mat. Pura Appl.}  (2025)






    \bibitem{PT2007}  Posp\'i\v{s}il, J.,     Tribe, R.:  Parameter estimates and exact variations for stochastic heat equations
      driven by space-time white noise. {\it Stoch. Anal. Appl.} {\bf 25}(3), 593--611 (2007)

    \bibitem{Song2018}
    Song, J.: SPDEs with colored Gaussian noise: a survey. {\it  Commun. Math. Stat.}  \textbf{6}(4), 481--492 (2018)

   \bibitem{SongSX2020} Song, J., Song X., Xu, F.: Fractional stochastic wave equation driven by a Gaussian noise rough in space. {\it Bernoulli}  \textbf{26}(4), 2699-2726 (2020)



\bibitem{Tal94}
 Talagrand, M.: Sharper bounds for Gaussian and empirical processes. 
{\it Ann. Probab.} {\bf 22}(1), 28--76 (1994)
 
 \bibitem{Tal95}
   Talagrand, M.:  Hausdorff measure of trajectories of multiparameter fractional Brownian motion.  {\it Ann. Probab.} {\bf 23}, 767--775 (1995)



\bibitem{Tal96}
 Talagrand, M.:  Lower classes of fractional Brownian motion. {\it J. Theor. Probab.} {\bf 9}, 191--213 (1996)




 \bibitem{TX17}
 Tudor,  C. A.,  Xiao, Y.:  Sample path properties of the solution to the fractional-colored stochastic heat equation. {\it Stoch. Dyn.}
  {\bf 17}(1), 1750004 (2017)


      \bibitem{Wang2024}
       Wang,  R.:  Analysis of the gradient for the stochastic fractional heat equation with spatially-colored noise in  $\mathbb R$.
      {\it Discrete Contin. Dyn. Syst. (B)}   {\bf 29}(6),   2769--2785 (2024)

      \bibitem{WX2024}
      Wang, R.,    Xiao, Y.:
      Temporal properties of the stochastic  fractional heat equation with spatially-colored noise.  {\it Theory Probab. Math. Statist.}      {\bf 110}, 121--142 (2024)


     \bibitem{WZ2021}
     Wang,   R.,   Zhang, S.:  Decompositions of stochastic convolution driven by a white-fractional Gaussian noise. {\it Front.     Math. China} {\bf16}(4), 1063--1073  (2021)
     \end{thebibliography}
    \end{document}